\documentclass[12pt]{article}

\usepackage[utf8]{inputenc}
\usepackage[T1]{fontenc}
\usepackage[margin=1in]{geometry}
\usepackage{amsmath, amsfonts, amssymb, amsthm}
\usepackage[none]{hyphenat}
\usepackage{graphicx}
\usepackage{float}
\usepackage{times}
\usepackage{lipsum}

\newcommand{\R}{\mathbb R}

\newtheorem{Th}{Theorem}[section]
\newtheorem{df}{Definition}[section]
\newtheorem{pro}{Proposition}[section]
\newtheorem{lm}{Lemma}[section]

\newtheorem{cor}{Corollary}[section]

\newtheorem{rma}{Remark}[section]

\newcommand{\norme}[1]{\left\Vert #1\right\Vert}

\DeclareMathOperator*{\essinf}{ess\,inf}

\def\Xint#1{\mathchoice
{\XXint\displaystyle\textstyle{#1}}%
{\XXint\textstyle\scriptstyle{#1}}%
{\XXint\scriptstyle\scriptscriptstyle{#1}}%
{\XXint\scriptscriptstyle\scriptscriptstyle{#1}}%
\!\int}
\def\XXint#1#2#3{{\setbox0=\hbox{$#1{#2#3}{\int}$ }
\vcenter{\hbox{$#2#3$ }}\kern-.6\wd0}}

\newcommand\restr[2]{{
  \left.\kern-\nulldelimiterspace 
  #1 
  \vphantom{\big|} 
  \right|_{#2} 
  }}
  
\newcommand\lfrac[2]{\frac{\displaystyle #1}{\displaystyle #2}}

\parindent 0ex

\numberwithin{equation}{section}

\begin{document}
\title{\bf{PWB-method and Wiener criterion for boundary regularity under generalized Orlicz growth}}
\author{\small{By Allami BENYAICHE and Ismail KHLIFI.}\\
\small {\it allami.benyaiche@uit.ac.ma; is.khlifi@gmail.com}\\
\small{Ibn Tofail University, Department of Mathematics, B.P: 133, Kenitra-Morocco.}}
\date{}
\maketitle {\bf Abstract:} {\it Perron's method and Wiener's criterion have entirely solved the Dirichlet problem for the Laplace equation. Since then, this approach has attracted the attention of many mathematicians for applying these ideas in the more general equations. So, in this paper, we extend the Perron method and the Wiener criterion to the $G(\cdot)$-Laplace equation.}\\
\\
Keywords and Phrases. Generalized Orlicz-Sobolev spaces, Generalized $\Phi$-functions, $G(\cdot)$-capacity, $G(\cdot)$-potential, Perron method, Wiener criterion.\\
2010 Mathematics Subject Classification: 31B25, 32U20, 35J25

\section{Introduction}
In this paper, we are concerned with the regularity of boundary point of a bounded domain  $\Omega $ of $\R^n$ respect to Dirichlet problem associated to $G(\cdot)$-Laplace operator defined by: 
$$-\Delta_{G(\cdot)}(u) := - \text{div} \frac{g(x,|\nabla u|)}{|\nabla u|}\nabla u,$$
where $g(\cdot)$ is the density of a generalized Orlicz function $G(\cdot)$ that have been previously used in $\cite{ref3,ref4,ref5,ref12,ref13,ref21}$. This equation covers for example, the p-Laplace equation $G(x,t) = t^p$, the variable exponent case $G(x,t) = t^{p(x)}$ and its perturbation $G(x,t) = t^{p(x)}\log(e+t)$, the double phase case $G(x,t) = t^p + a(x)t^q$, and the Orlicz case $G(x,t) = G(t)$. More examples can be found in $\cite{ref12}$.\\
\\
Historically, Riemann proposed in $1851$ the Dirichlet principle, which states that a harmonic function always exists in the interior of a domain with boundary conditions given by a continuous function. However, Lebesgue produced in $1912$ an example of the bounded domain on which the Dirichlet problem was not always solvable. Overcome this problem; there is a method based on the work of Perron, Wiener, and  Brelot is nowadays well known the Perron's method or PWB-method $\cite{ref23}$, also referred to the method of subharmonic functions,  based on the finding the largest subharmonic function with boundary values below the desired values. The advantage of this method is that one can construct reasonable solutions for arbitrary boundary data. After that, in $1924$, Wiener introduced the harmonic capacity to give his famous criterion of the regularity of a boundary point which allows us to solve the Dirichlet problem for the Laplace equation completely. Since then, Perron's method and Wiener's criterion have attracted the attention of many mathematicians for applying these ideas to study the Dirichlet problem in the more general equations.\\

For $f \in W^{1,G(\cdot)}(\Omega)$, the authors proved in $\cite{ref5}$ the existence of the solution to the Dirichlet-Sobolev problem 
$$\left \{
   \begin{array}{r c l}
    -\Delta_{G(\cdot)}(u) = 0 \quad \text{in} \quad  \Omega\\
    u - f \in W_0^{1,G(\cdot)}(\Omega).
   \end{array}
   \right.$$
where $W^{1,G(\cdot)}(\Omega)$ and $W_0^{1,G(\cdot)}(\Omega)$ are the generalized Orlicz-Sobolev space, also called Musielak-Orlicz-Sobolev space (see section $2$). So, the question that arises is on the regularity of the Sobolev boundary point $x_0 \in \partial \Omega$, i.e 
$$
\lim_{x \to x_0}u(x) = f(x_0),
$$
for any $f \in W^{1,G(\cdot)}(\Omega) \cap C(\overline{\Omega})$.\\
In the p-Laplace equation, $G(x,t) = t^p$, if $\Omega$ satisfies the exterior sphere condition (see section $3$) then $\Omega$ is a Sobolev $p$-regular domain. By the work of Harjulehto and H\"ast\"o in the locally fat set $\cite{ref13}$, we generalize this result in our situation. As a consequence of this result, we solve the Dirichlet problem for simple domains. We shall need this possibility to construct the Poison modification of our functions because this modification is based on the approximation of the solution to the Dirichlet problem in balls. Therefore, by the ideas of Granlund, Lindqvist, and Martio $\cite{ref11}$, we can apply Perron's method to the $G(\cdot)$-Laplace equation. More correctly, the regularity of boundary point is defined in connection with the solution of generalized Dirichlet problem (see $\cite{ref26,ref23}$), not only for Dirichlet-Sobolev solution. Precisely, we say a boundary point $x_0 \in \partial \Omega$ is regular if 
$$\lim_{x \to x_0}H_f(x) = f(x_0),$$ 
for $f \in C(\partial \Omega)$ where $H_f$ is the Perron solution with boundary data $f$ (see section $5$).\\
In the non-linear case, the best condition for the regularity boundary points is given by the celebrated Wiener criterion.  This criterion has been generalized in the variable constant. The sufficiency part has been proved by Maz'ya in $\cite{ref18}$, and the necessary part was proved by Kilpelainen and Maly in $\cite{ref15}$. Next, Trudinger and Wang $\cite{ref20}$ gave a new method based on Poisson modification and Harnack inequality. Mikkonen has treated the weighted situation in $\cite{ref24}$. Björn has developed the proof of this criterion in the metric measure spaces $\cite{ref7}$. In the variable exponent case, $G(x,t) = t^{p(x)}$, the problem has been study by Alkhutov and Krasheninnikova in $\cite{ref1}$. Recently, K.A Lee and  S.C Lee in $\cite{ref25}$ proved the Wiener criterion for the regularity of Sobolev boundary point in the Orlicz case. So, it is natural to ask what Wiener criterion should satisfy to guarantee regular points in the generalized Orlicz situation. Applying estimates of a particular $G(\cdot)$-supersolution called the $G(\cdot)$-potential, the central condition $(A_{1,n})$ (see section $2$), the pointwise Wolff estimates in $\cite{ref6}$, and the Perron $G(\cdot)$-solution, we get our main result, which is new even in the Orlicz case. 
\begin{Th}
 Let $ G(\cdot) \in \Phi(\R^n) \cap C^1(\R^+)$ be strictly convex and satisfy $(SC)$, $(A_0)$, $(A_1)$, and $(A_{1,n})$. The point $x_0 \in \partial \Omega$ is ${G(\cdot)}$-regular if and only if for some $\rho > 0$, 
$$\int_0^{\rho} g^{-1}\left(x_0 , \frac{\textsl{cap}_{G(\cdot)}(B(x_0,t)\cap \Omega^\complement, B(x_0,2t)}{t^{n-1}}\right) \, \mathrm{d}t = \infty.$$
\end{Th}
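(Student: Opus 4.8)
The plan is to prove the two implications separately. As a preliminary remark, the condition depends only on the germ of the integrand at $0$ --- for $0<\rho_1<\rho_2$ the integral over $[\rho_1,\rho_2]$ is finite by the capacity estimates of Section~3 --- so ``for some $\rho$'' and ``for all $\rho$'' coincide, and a dyadic decomposition gives, writing $B_j=B(x_0,2^{-j}\rho)$, $r_j=2^{-j}\rho$ and $K_j=\overline{B_j}\cap\Omega^\complement$,
$$\int_0^{\rho}g^{-1}\!\left(x_0,\frac{\mathrm{cap}_{G(\cdot)}(B(x_0,t)\cap\Omega^\complement,B(x_0,2t))}{t^{\,n-1}}\right)\,\mathrm{d}t\ \asymp\ \sum_{j\ge1}a_j,\qquad a_j:=r_j\,g^{-1}\!\left(x_0,\frac{\mathrm{cap}_{G(\cdot)}(K_j,2B_j)}{r_j^{\,n-1}}\right).$$
It thus suffices to prove that $\sum_ja_j=\infty\Rightarrow x_0$ is $G(\cdot)$-regular, and $\sum_ja_j<\infty\Rightarrow x_0$ is not $G(\cdot)$-regular. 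I would treat the first along the Poisson-modification and Harnack line of Trudinger--Wang \cite{ref20}, the second along the capacitary-potential line of Kilpel\"ainen--Mal\'y \cite{ref15}. In both directions the homogeneity of $t\mapsto t^p$ is unavailable --- one cannot, for instance, multiply a $G(\cdot)$-(super)solution by a scalar and stay in the same class --- so it is replaced by $(SC)$, $(A_0)$, $(A_1)$, $(A_{1,n})$, by the estimates for the $G(\cdot)$-potential, and by the pointwise Wolff estimates of \cite{ref6}.

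\emph{Sufficiency.} Fix $f\in C(\partial\Omega)$, normalize $f(x_0)=0$, put $u=H_f$, $M_j=\sup_{\Omega\cap2B_j}u$ and $\varepsilon_j=\sup_{\partial\Omega\cap2B_j}|f|\to0$. Then $M_j-u$ is a nonnegative $G(\cdot)$-supersolution on $\Omega\cap2B_j$ whose boundary values are $\ge M_j-\varepsilon_j$ on the ``fat'' set $K_j$ (the portion of $\partial\Omega$ inside $\overline{B_j}$, where $f\le\varepsilon_j$). The decisive estimate is the capacitary weak Harnack bound
$$\inf_{\Omega\cap2B_{j+1}}(M_j-u)\ \ge\ c\,a_j\,(M_j-\varepsilon_j),$$
which I would derive from the weak Harnack inequality for $G(\cdot)$-supersolutions (Section~4) together with a lower bound for $\mathrm{cap}_{G(\cdot)}(K_j,2B_j)$; the passage from that capacity to the scalar $a_j$ is exactly what $(A_{1,n})$ supplies (it also dictates the weight $r_j^{\,n-1}$), and the same bound may alternatively be read off by comparing $M_j-u$ with the $G(\cdot)$-potential of the condenser $(K_j,2B_j)$ and invoking its lower Wolff estimate from \cite{ref6}. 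Hence $M_{j+1}\le(1-c\,a_j)M_j+\varepsilon_j$, and running the argument with $-f$ controls $-\inf_{\Omega\cap2B_j}u$ in the same way. Since $\sum_ja_j=\infty$ and $\varepsilon_j\to0$, a standard iteration lemma forces $M_j\to0$ and $\inf_{\Omega\cap2B_j}u\to0$, whence $\lim_{x\to x_0}H_f(x)=f(x_0)$ and $x_0$ is $G(\cdot)$-regular. (The radii above must be separated by harmless fixed factors so that the shell carrying $M_j-u$, the Harnack ball, and $B_{j+1}$ nest correctly; this bookkeeping is routine.)

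\emph{Necessity.} Arguing by contraposition, assume $\sum_ja_j<\infty$ and, for contradiction, that $x_0$ is $G(\cdot)$-regular. Take $f\in C(\partial\Omega)$ with $f(x_0)=0$ and $f>0$ on $\partial\Omega\setminus\{x_0\}$, so that $H_f(x)\to0$ as $x\to x_0$. Let $u_j$ be the $G(\cdot)$-potential of the condenser $(K_j,2B_j)$ ($0\le u_j\le1$, a $G(\cdot)$-supersolution, $u_j=1$ q.e.\ on $K_j$, $u_j=0$ on $\partial(2B_j)$). The Kilpel\"ainen--Mal\'y mechanism extracts from the regularity of $x_0$ a quantitative \emph{lower} bound on the decay of $H_f$ in terms of $\mathrm{cap}_{G(\cdot)}(K_j,2B_j)$: comparing $H_f$ and its Poisson modifications on the shells $\Omega\cap2B_j$ with the potentials $u_j$, and feeding the \emph{upper} pointwise Wolff estimate of \cite{ref6} and $(A_{1,n})$ (which converts the $j$-th capacity into $a_j$) into a telescoping argument over the dyadic scales, one is forced to conclude $\sum_ja_j=\infty$, contradicting the hypothesis. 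Therefore $x_0$ is not $G(\cdot)$-regular, and the two implications together give the theorem.

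The step I expect to be the main obstacle --- beyond transcribing the by now fairly standard Perron and comparison machinery of Sections~4--5 --- is making the capacitary/Wolff estimates, e.g.\ $\inf_{\Omega\cap2B_{j+1}}(M_j-u)\asymp a_j(M_j-\varepsilon_j)$ and its upper analogue, hold with constants independent of $j$ and \emph{uniform over the ball}, not merely at $x_0$: the Wolff bounds of \cite{ref6} are pointwise, and turning them into the ball-uniform, $\mathrm{cap}_{G(\cdot)}$-indexed numbers $a_j$ forces one to push the $x$-dependence of $G$ through $(A_1)$ and $(A_{1,n})$ at every dyadic scale and to verify that the exceptional $t$-ranges built into those conditions never meet the range where the equilibrium measure of $K_j$ concentrates. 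Compounding this, the absence of homogeneity means the oscillation decay cannot be read off from a rescaled barrier and must instead be wrung out of the weak Harnack inequality, so the capacity lower bound and the Harnack estimate have to be matched quantitatively rather than merely combined. This is precisely where the generalized Orlicz setting departs from the Orlicz and variable-exponent cases; the rest follows the classical template.
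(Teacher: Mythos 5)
Your sufficiency half is viable and, in substance, close to the paper's, though organized differently: the paper does not iterate on $H_f$ itself but on the capacitary potential $u=\mathcal{R}_{G(\cdot)}(\overline B(x_0,r)\setminus\Omega,\,B(x_0,4r))$, proving the exponential decay $1-u(x)\le\exp\bigl(-C\int_\rho^r g^{-1}(x_0,\mathrm{cap}_{G(\cdot)}/t^{n-1})\,\mathrm{d}t+Cr\bigr)$ (Theorem 6.2) by iterating exactly the estimate you call the capacitary weak Harnack bound; that estimate is obtained from Lemma 6.4, $C\,r\,g^{-1}\bigl(x_0,\mu[u](B)/r^{n-1}\bigr)\le\inf_B u+r$, together with the two-sided comparison $\mu[u]\simeq\mathrm{cap}_{G(\cdot)}$ (Theorem 6.1), and the decay is then transferred to an arbitrary $H_f$ through the barrier inequality $H_f-f(x_0)\le m+M(1-u)$ and the localization Lemmas 7.1--7.2. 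Two points you gloss over matter here: (i) inhomogeneity forces additive $+r_j$ errors in the basic estimate, so your clean recursion $M_{j+1}\le(1-ca_j)M_j+\varepsilon_j$ must carry $\exp(-Ca_j+Cr_j)$-type factors (harmless, but it is exactly the bookkeeping the paper does); and (ii) if you iterate on $M_j-H_f$ directly, you must justify that its truncation extends as a $G(\cdot)$-supersolution across $\partial\Omega\cap B_j$ into the full ball before any weak-Harnack or capacity estimate applies --- a Perron solution only comes with $\limsup$/$\liminf$ boundary control, not Sobolev boundary values. The paper sidesteps this entirely by estimating the potential, which is a genuine supersolution in the whole ball, and only comparing with $H_f$ at the end.

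The genuine gap is the necessity direction. As written, you assert that regularity of $x_0$, fed through ``the Kilpel\"ainen--Mal\'y mechanism,'' Poisson modifications, a telescoping over dyadic scales and the upper Wolff estimate, ``forces one to conclude $\sum_j a_j=\infty$''; but that conclusion is precisely the statement to be proved, and no step producing it is identified --- in particular the auxiliary $f>0$ with $f(x_0)=0$ and the claimed ``quantitative lower bound on the decay of $H_f$'' play no recognizable role in any such argument. The actual mechanism (Theorem 6.3 and the end of the proof of Theorem 7.1) does not involve $H_f$ at all: assuming the Wiener integral converges, the pointwise upper Wolff estimate of \cite{ref6} applied to the capacitary potential $u$, combined with the truncation comparison $u_t\le u$ and $\mu_t(B(x_0,t))\le C\,\mathrm{cap}_{G(\cdot)}(\overline B(x_0,2t)\cap\Omega^\complement;B(x_0,4t))$ (via Theorem 6.1 and Proposition 2.1), plus a capacity bound for $\inf_{2B}u$, gives $\liminf_{x\to x_0}u(x)\le C\bigl(\int_0^{4r}g^{-1}(x_0,\mathrm{cap}_{G(\cdot)}/t^{n-1})\,\mathrm{d}t+r\bigr)<1$ for $r$ small; since $u$ solves the Sobolev--Dirichlet problem in $B(x_0,4r)\setminus(\overline B\cap\Omega^\complement)$ with boundary data continuous at $x_0$ and equal to $1$ there, $x_0$ cannot be $G(\cdot)$-regular. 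Without this (or an equivalent) concrete chain from convergence of the series to failure of boundary convergence of a specific solution, your necessity half remains a restatement of the goal rather than a proof.
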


\section{Preliminary}

\begin{df}
A function $G: \Omega \times [0,\infty) \rightarrow [0,\infty]$ is called a generalized $\Phi$-function, denoted by $G(\cdot) \in \Phi(\Omega)$, if the following conditions hold
\begin{itemize}
\item For each $t \in [0,\infty)$, the function $G(\cdot,t)$ is measurable.
\item For a.e $x \in \Omega$, the function $G(x, \cdot)$ is an $\Phi$-function, i.e.
\begin{enumerate}
\item $G(x,0) =  \displaystyle \lim\limits_{t \rightarrow 0^+} G(x,t) = 0$ and $\displaystyle \lim\limits_{t \rightarrow \infty} G(x,t) = \infty$;
\item $G(x,\cdot)$ is increasing and convex.
\end{enumerate}
\end{itemize}
\end{df}

\noindent Note that, a generalized $\Phi$-function can be represented as
$$ G(x,t) =  \displaystyle \int_{0}^t g(x,s) \, \mathrm{d}s,$$
where $g(x,\cdot)$ is the right-hand derivative of $G(x,\cdot)$. Furthermore, for each $x \in \Omega$, the function $g(x,\cdot)$ is right-continuous and nondecreasing. So, we have the following inequality
\begin{equation}
g(x,a)b \leq g(x,a)a + g(x,b)b, \; \; \text{for} \, x \in \Omega \; \text{and} \; a,b \geq 0
\end{equation}
We denote $G^+_{B}(t) := \sup_B G(x,t), \; G^-_B(t) := \inf_B G(x,t)$.  We say that  $G(\cdot)$ satisfies\\
$ (SC):$ If there exist two constants  $g_0,g^0 > 1$ such that,
$$ 1 < g_0 \leq \displaystyle \frac{t g(x,t)}{G(x,t)} \leq g^0.$$
$(A_0):$ If there exists a constant  $c_0 > 1$ such that,
 $$ \displaystyle \frac{1}{c_0} \leq G(x,1) \leq c_0, \, \; \text{a.e} \; \, x \in \Omega.$$
 $(A_{1}):$ If there exists $C > 0$ such that, for every $ x,y \in B_R \subset \Omega$ with $R \leq 1$, we have
$$ G_B(x,t) \leq C G_B(y,t), \; \; \; \text{when} \; \; G^-_B(t) \in \left[ 1 , \frac{1}{R^n}\right].$$
$(A_{1,n}):$ If there exists $C > 0$ such that, for every $ x,y \in B_R \subset \Omega$ with $R \leq 1$, we have
$$ G_B(x,t) \leq C G_B(y,t), \; \; \; \text{when} \; \; t \in \left[ 1 , \frac{1}{R}\right].$$

The following lemma gives a more flexible characterization of $(A_{1,n}) \; \cite{ref12}$.

\begin{lm}
Let $\Omega \subset \R^n$ be convex, $G(\cdot) \in \Phi(\Omega)$ and $0 < r \leq s$. Then $G(\cdot)$ satisfies $(A_{1,n})$ if, and only if, there exists $C > 0$ such that, for every $ x,y \in B_R \subset \Omega$ with $R \leq 1$, we have
$$ G_B(x,t) \leq CG_B(y,t) \; \; \; \text{when} \; \; t \in \left[ r , \frac{s}{R}\right]. $$
\end{lm}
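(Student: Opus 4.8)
The plan is to exploit that passing from $[1,1/R]$ to $[r,s/R]$ only dilates the lower endpoint by $r$ and the upper endpoint by $s$, two \emph{fixed} numbers, so the two comparison inequalities transport into one another by the elementary scaling behaviour of $\Phi$-functions. First I would record the tool. For a fixed $x$, the function $G_B(x,\cdot)$ is convex and vanishes at $0$, hence $\tau\mapsto G_B(x,\tau)/\tau$ is nondecreasing and $G_B(x,\lambda\tau)\le\lambda\,G_B(x,\tau)$ for $0<\lambda\le1$; under $(SC)$, integrating $g_0/\sigma\le g(x,\sigma)/G(x,\sigma)\le g^0/\sigma$ between $\tau$ and $\lambda\tau$ yields in addition
\[
\min(\lambda^{g_0},\lambda^{g^0})\,G_B(x,\tau)\ \le\ G_B(x,\lambda\tau)\ \le\ \max(\lambda^{g_0},\lambda^{g^0})\,G_B(x,\tau),\qquad \lambda>0 .
\]
Hence for any fixed $0<c_1\le c_2<\infty$ there is a constant $\Lambda=\Lambda(c_1,c_2,g_0,g^0)\ge1$ such that $\Lambda^{-1}G_B(z,\tau)\le G_B(z,t)\le\Lambda\,G_B(z,\tau)$ whenever $t/\tau\in[c_1,c_2]$, for every $z$.

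For the ``only if'' direction, assume $(A_{1,n})$ and fix $x,y\in B_R\subset\Omega$ with $R\le1$ and $t\in[r,s/R]$. I project $t$ onto the admissible window $[1,1/R]$: set $\tau:=t$ if $t\in[1,1/R]$, $\tau:=1$ if $t<1$, and $\tau:=1/R$ if $t>1/R$. In the first case $t/\tau=1$; in the second, $t<1$ forces $r<1$ and $t/\tau=t\in[r,1)$; in the third, $t>1/R$ forces $s>1$ and $t/\tau=tR\in(1,s]$. Thus $t/\tau$ always lies in the fixed interval $[c_1,c_2]:=[\min(r,1),\max(s,1)]\subset(0,\infty)$, and applying the scaling estimate, then $(A_{1,n})$ at the admissible scale $\tau$, then the scaling estimate once more,
\[
G_B(x,t)\ \le\ \Lambda\,G_B(x,\tau)\ \le\ \Lambda C\,G_B(y,\tau)\ \le\ \Lambda^2 C\,G_B(y,t),
\]
which is the claimed inequality on $[r,s/R]$.

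The ``if'' direction is entirely symmetric: starting from the ranged inequality on $[r,s/R]$ and taking $t\in[1,1/R]$, I project $t$ onto $[r,s/R]$ ($\tau:=t$, or $\tau:=r$ when $t<r$, or $\tau:=s/R$ when $t>s/R$); again $t<r$ forces $r>1$ with $t/\tau=t/r\in[1/r,1)$, and $t>s/R$ forces $s<1$ with $t/\tau=tR/s\in(1,1/s]$, so $t/\tau$ stays in a fixed compact subinterval of $(0,\infty)$ depending only on $r$ and $s$, and the same three-step chain gives $G_B(x,t)\le\Lambda^2C\,G_B(y,t)$ for $t\in[1,1/R]$, i.e. $(A_{1,n})$. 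The only ingredient beyond this bookkeeping is the scaling estimate for $G_B(x,\cdot)$; convexity supplies it for free on the side $\lambda\le1$, while the side $\lambda\ge1$ — bounding $G_B(x,\lambda\tau)$ from above by a multiple of $G_B(x,\tau)$ for bounded $\lambda>1$ — is exactly where $(SC)$ enters, and is therefore the one step to watch (it cannot be run on a bare $\Phi$-function).
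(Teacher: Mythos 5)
Your scaling argument is sound, and it is worth saying up front that the paper offers no proof of this lemma at all: it is quoted from the monograph of Harjulehto--H\"ast\"o \cite{ref12}, so there is no internal argument to compare yours against. Granting $(SC)$, your bookkeeping is correct: projecting $t\in[r,s/R]$ onto $[1,1/R]$ (and conversely) always produces a dilation factor $t/\tau$ confined to the fixed interval $[\min(r,1),\max(s,1)]$, monotonicity handles the contraction side, and the power bounds $(2.2)$--$(2.3)$ coming from $(SC)$ absorb the bounded dilation, so the constant degrades only by a factor depending on $r,s,g_0,g^0$. The case analysis ($t<1$ forces $r<1$; $t>1/R$ forces $s>1$; and symmetrically in the converse direction) is exactly right.

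The one substantive issue is the hypothesis mismatch that you yourself flag in your last sentence: the lemma as printed assumes only $G(\cdot)\in\Phi(\Omega)$, and with the comparison constant placed \emph{outside} ($G(x,t)\le C\,G(y,t)$) the window-flexibility genuinely fails for bare generalized $\Phi$-functions, so no proof can avoid an extra assumption. A concrete obstruction: on $\Omega=(0,1)$ take $G(x,t)=t$ off a union of intervals $I_k$ accumulating at $0$, and $G(x,t)=\max\{2t-1,\,t/N_k\}$ on $I_k$ with $N_k\to\infty$. For $t\ge 1$ all these functions agree up to a factor $2$, so the paper's $(A_{1,n})$ holds with $C=2$, yet at $t=\tfrac12$ the ratio inside a single ball such as $B(\tfrac14,\tfrac14)$ is about $N_k$, so no constant works on a window reaching below $1$. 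In \cite{ref12} the flexible-window statement is formulated with the constant inside the argument of $G$ (equivalently via $G^{-1}$) together with $(A_0)$, which is how it escapes any doubling hypothesis; the paper's restatement with the constant outside is accurate only under something like $(SC)$. Since every place this lemma is used here (Lemma 6.4, Theorems 6.2 and 6.3) assumes $(SC)$ and $(A_0)$, your proof is entirely adequate for the paper's purposes, but it proves the lemma under $(SC)$ rather than as literally stated --- and your closing caveat about where $(SC)$ enters is not a mere caution but the precise reason the hypothesis should be read as including it.
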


\noindent  Under the structure condition $(SC)$, we have the following inequalities
\begin{equation}
\sigma^{g_0} G(x,t) \leq G(x,\sigma t) \leq \sigma^{g^0} G(x,t), \; \;\text{for} \; x \in \Omega, \; \, t \geq 0 \; \text{and} \;\sigma \geq 1.
\end{equation}
\begin{equation}
\sigma^{g^0} G(x,t) \leq G(x,\sigma t) \leq \sigma^{g_0} G(x,t), \; \;\text{for} \; x \in \Omega, \; \, t \geq 0 \; \text{and} \;\sigma \leq 1.
\end{equation}
We define $G^*(\cdot)$ the conjugate $\Phi$-function of $G(\cdot)$, by
$$ G^*(x,s) := \sup_{t \geq 0}(st - G(x,t)), \; \, \; \text{for} \; x \in \Omega \; \text{and} \; s \geq 0.$$ 
Note that $G^*(\cdot)$ is also a generalized $\Phi$-function and can be represented as
$$ G^*(x,t) =  \displaystyle \int_{0}^t g^{-1}(x,s) \, \mathrm{d}s,$$
with $g^{-1}(x,s) : = \sup \{ t \geq 0 \; : \; g(x,t) \leq s \}$. Furthermore, if $G(\cdot)$ satisfies $(SC)$, then $G^*(\cdot)$ satisfies also $(SC)$, as follows
\begin{equation}
\displaystyle \frac{g^0}{g^0 - 1} \leq \displaystyle \frac{t g^{-1}(x,t)}{G^*(x,t)} \leq \frac{g_0}{g_0 - 1}.
\end{equation}
The functions $G(\cdot)$ and $G^*(\cdot)$ satisfies the following Young inequality 
$$ st \leq G(x,t) + G^*(x,s), \; \, \text{for} \; x \in \Omega \; \text{and} \; s,t \geq 0.$$
Further, we have the equality if $s = g(x,t)$ or  $t = g^{-1}(x,s)$. So, if $G(\cdot)$ satisfies $(SC)$, we have the following inequality
\begin{equation}
 G^*(x,g(x,t)) \leq (g^0-1)G(x,t), \; \forall x \in \Omega, t \geq 0.
\end{equation}

\begin{df}
We define the generalized Orlicz space, also called Musielak-Orlicz space, by
$$ L^{G(\cdot)}(\Omega) := \{ u \in L^0(\Omega) \; : \; \displaystyle \lim\limits_{\lambda \rightarrow 0}  \rho_{G(\cdot)}(\lambda|u|) = 0 \},$$
where $\rho_{G(\cdot)}(t) = \displaystyle \int_{\Omega} G(x,t) \, \mathrm{d}x$. If $G(\cdot)$ satisfies $(SC)$, then
$$ L^{G(\cdot)}(\Omega) = \{ u \in L^0(\Omega) \; : \; \rho_{G(\cdot)}(|u|) < \infty \}.$$
\end{df}

\begin{df}
We define the generalized Orlicz-Sobolev space by
$$ W^{1,G(\cdot)}(\Omega) := \{ u \in L^{G(\cdot)}(\Omega) \; : \; |\nabla u| \in L^{G(\cdot)}(\Omega), \; \, \text{in the distribution sense} \},$$
equipped with the norm
$$ \norme{u}_{1,G(\cdot)} = \norme{u}_{G(\cdot)} + \norme{\nabla u}_{G(\cdot)}.$$
\end{df}

\begin{df}
$W^{1,G(\cdot)}_0(\Omega)$ is the closure of $C^\infty_0(\Omega)$ in $W^{1,G(\cdot)}(\Omega).$
\end{df}

Note that, in such spaces, we have the following Poincaré inequality $\cite{ref12}$.

\begin{Th}
Let $G(\cdot) \in \Phi(\Omega) $ satisfy $(A_0)$ and $(A_1)$. There exists a constant $C>0$ such that
$$\int_{\Omega} G(x, \frac{|u|}{\text{diam}(\Omega)}) \, \mathrm{d}x  \leq C \left( \int_{\Omega} G(x, |\nabla u|) \, \mathrm{d}x  + |\{\nabla u\ \neq 0\} \cap \Omega|\right),$$
for every $u \in W^{1,G(\cdot)}_0(\Omega)$. 
\end{Th}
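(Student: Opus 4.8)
The plan is to reduce to smooth functions, control $|u|$ pointwise by the Riesz potential of $|\nabla u|$, and then transport this inequality into the modular $\int_\Omega G(x,\cdot)\,\mathrm{d}x$ by means of the ``key estimate'' for generalized $\Phi$-functions available under $(A_0)$ and $(A_1)$ (see \cite{ref12}). The extra term $|\{\nabla u\neq 0\}\cap\Omega|$ will emerge as the contribution of the regime of small values of $|\nabla u|$, where $(A_1)$ does not apply and one only has the pointwise control $G(x,t)\le c_0$ coming from $(A_0)$.

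First, since $W_0^{1,G(\cdot)}(\Omega)$ is by definition the closure of $C_0^\infty(\Omega)$, and since the doubling inequalities (2.2)--(2.3) make modular and norm convergence agree, it suffices to prove the inequality for $u\in C_0^\infty(\Omega)$ and then pass to the limit (Fatou on the left, convergence of $\int_\Omega G(x,|\nabla u_k|)\,\mathrm{d}x$ on the right, and $|\{\nabla u\neq 0\}\cap\Omega|\le|\Omega|<\infty$). After a translation we may assume $\Omega\subset B(0,d)$, $d:=\mathrm{diam}(\Omega)$. For $u\in C_0^\infty(\Omega)$ the fundamental theorem of calculus along rays yields
$$ |u(x)| \le c_n \int_\Omega \frac{|\nabla u(y)|}{|x-y|^{n-1}}\,\mathrm{d}y =: c_n\, I_1(|\nabla u|)(x). $$
Splitting the integral into dyadic annuli $\{2^{-j-1}d<|x-y|\le 2^{-j}d\}$ and using that $\nabla u$ is supported in $B(0,d)\subset B(x,d)$ for every $x\in\Omega$, one obtains
$$ \frac{|u(x)|}{d} \le C\sum_{j\ge 0} 2^{-j}\, \dashint_{B_j} |\nabla u|\,\mathrm{d}y, \qquad B_j:=B(x,2^{-j}d). $$

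The main step is the passage to the modular. Under $(A_0)$ and $(A_1)$ there exist $\beta\in(0,1]$ and $C\ge1$ such that, for every ball $B$ of radius $\le 1$ and every $h$ with $\int_B G(y,|h|)\,\mathrm{d}y\le 1$,
$$ G\!\left(x,\ \beta\dashint_B |h|\,\mathrm{d}y\right) \le C\dashint_B G(y,|h|)\,\mathrm{d}y + C\,c_0\dashint_B \mathbf 1_{\{h\neq 0\}}\,\mathrm{d}y. $$
After homogenizing with (2.2)--(2.3) so that $\int_\Omega G(x,|\nabla u|)\,\mathrm{d}x\le 1$, I use the convexity of $G(x,\cdot)$ to bring $G(x,\cdot)$ inside the geometric sum (whose weights $2^{-j}$ sum to a constant, the excess factor being absorbed by the doubling (2.2)), and then apply the key estimate on each $B_j$. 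This bounds $G\!\left(x,|u|/d\right)$ by a summable weighted average of $y\mapsto G(y,|\nabla u|(y))$ over the balls $B_j$, plus a constant multiple of the averaged indicator $\dashint_{B_j}\mathbf 1_{\{\nabla u\neq 0\}}\,\mathrm{d}y$. Integrating in $x$ over $\Omega$ and applying Fubini (the averaging kernels $\dashint_{B_j}$ are uniformly bounded on $L^1$, and are summable in $j$ through the factor $2^{-j}$ and the doubling from $(SC)$) turns the first part into $C\int_\Omega G(y,|\nabla u|)\,\mathrm{d}y$, while the indicator part integrates to $C\,|\{\nabla u\neq 0\}\cap\Omega|$. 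Undoing the normalization with (2.2)--(2.3) gives the claim.

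I expect the delicate point to be this last step: keeping the argument $\beta\dashint_{B_j}|\nabla u|$ inside the admissible range $G^-_{B_j}(\cdot)\in[1,R_j^{-n}]$ in which $(A_1)$ permits swapping the base point from $x$ to $y$, and isolating the complementary small-value regime so that its contribution collapses \emph{exactly} to $|\{\nabla u\neq 0\}\cap\Omega|$ rather than to an uncontrolled quantity. The remaining technical checks are: that the dyadic series of errors stays summable in $j$; that the finitely many coarse scales with $2^{-j}d>1$ are handled by covering $B_j$ with a bounded number of unit balls so as to remain within the range of $(A_1)$; and that the homogenization is reversible with constants depending only on $g_0,g^0,c_0,n$ and the structural constants.
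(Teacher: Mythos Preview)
The paper does not supply a proof of this theorem; it is quoted as a known result from \cite{ref12} (see the line immediately preceding the statement of Theorem~2.1). There is therefore no in-paper argument to compare your proposal against.

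That said, your outline is precisely the strategy used in \cite{ref12} to establish this inequality: Riesz-potential pointwise bound for $u\in C_0^\infty(\Omega)$, dyadic decomposition of the kernel into concentric balls, the ``key estimate'' under $(A_0)$--$(A_1)$ to transfer averages of $|\nabla u|$ through $G(x,\cdot)$ at the cost of an indicator term, and then Fubini over the chain of balls. Your interpretation of the extra term $|\{\nabla u\neq 0\}\cap\Omega|$ as the contribution of the small-value regime in which $(A_1)$ gives no information is also exactly the mechanism in \cite{ref12}.

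One point to tighten: you repeatedly invoke the doubling inequalities (2.2)--(2.3) and explicitly $(SC)$, whereas the theorem as stated assumes only $(A_0)$ and $(A_1)$. In \cite{ref12} the constants are arranged so that only convexity of $G(x,\cdot)$ is needed to pass through the geometric series: one normalizes the weights $2^{-j}$ to sum to $1$ and absorbs the resulting fixed factor into the argument of $G$ on the left-hand side (this is why the statement carries $|u|/\mathrm{diam}(\Omega)$ rather than $|u|$, and why an unspecified constant $C$ sits outside). If you retain $(SC)$ in your write-up you are formally proving a weaker statement than the one quoted; in the context of this paper, however, $(SC)$ is assumed wherever Theorem~2.1 is actually applied, so the distinction is immaterial for the downstream results.
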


\begin{df}
Let $G(\cdot) \in \Phi(\Omega)$ and $K \subset \Omega$ be a compact set. The relative $G(\cdot)$-capacity of $K$ with respect to $\Omega$ is 
$$\textsl{cap}_{G(\cdot)}(K;\Omega) = \inf_{u \in S_{G(\cdot)}(K;\Omega)} \displaystyle \int_{\Omega} G(x, |\nabla u|) \, \mathrm{d}x $$
where $S_{G(\cdot)}(K;\Omega)=\{u\in W^{1, G(\cdot)}_0(\Omega) \; : \; u \geq 1 \; \text{on} \; K\}$
\end{df}

\begin{pro}
Let $G(\cdot) \in \Phi(\Omega)$ satisfy $(A_0)$ and $(A_{1})$..
\begin{itemize}
\item[i)] $\textsl{cap}_{G(\cdot)}(\emptyset;\Omega)=0$.
\item[ii)] If $K , K^\prime$ are compact sets and  $\Omega^\prime$ is open set such that $K \subset K^\prime \subset \Omega^\prime \subset \Omega$, then 
$$\textsl{cap}_{G(\cdot)}(K;\Omega) \leq \textsl{cap}_{G(\cdot)}(K^\prime;\Omega^\prime).$$
\item[iii)] If $K \subset  B(x_0,r)$ and $0 < r \leq s \leq 2r$, then
$$\textsl{cap}_{G(\cdot)}(K; B(x_0,2s)) \leq \textsl{cap}_{G(\cdot)}(K; B(x_0,2r)) \leq C \left(\textsl{cap}_{G(\cdot)}(K; B(x_0,2s)) + s^{n}\right).$$
\end{itemize}
\end{pro}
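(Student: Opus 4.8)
The plan is to treat the three items in turn; i) and ii) are formal, and the content is concentrated in iii). For i), the admissibility constraint ``$u\ge 1$ on $\emptyset$'' is vacuous, so $S_{G(\cdot)}(\emptyset;\Omega)=W^{1,G(\cdot)}_0(\Omega)$; since $0\in W^{1,G(\cdot)}_0(\Omega)$ and $\int_\Omega G(x,|\nabla 0|)\,\mathrm{d}x=\int_\Omega G(x,0)\,\mathrm{d}x=0$, the infimum is $0$. For ii), I would take an arbitrary $u\in S_{G(\cdot)}(K';\Omega')$ and let $\widetilde u$ be its extension by $0$ to $\Omega$; approximating $u$ in $W^{1,G(\cdot)}(\Omega')$ by functions in $C^\infty_0(\Omega')$ and extending those by $0$ (which gives functions in $C^\infty_0(\Omega)$ whose modular over $\Omega$ equals the modular over $\Omega'$) shows $\widetilde u\in W^{1,G(\cdot)}_0(\Omega)$ with $\nabla\widetilde u=\chi_{\Omega'}\nabla u$ a.e. Since $K\subset K'$ and $\widetilde u\ge 1$ on $K'$, we get $\widetilde u\in S_{G(\cdot)}(K;\Omega)$ with $\int_\Omega G(x,|\nabla\widetilde u|)\,\mathrm{d}x=\int_{\Omega'}G(x,|\nabla u|)\,\mathrm{d}x$, and taking the infimum over such $u$ yields $\textsl{cap}_{G(\cdot)}(K;\Omega)\le \textsl{cap}_{G(\cdot)}(K';\Omega')$.

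For iii), the first inequality is the special case $K=K'$, $\Omega'=B(x_0,2r)$, $\Omega=B(x_0,2s)$ of ii) (the inclusions $K\subset B(x_0,r)\subset B(x_0,2r)\subset B(x_0,2s)$ holding since $r\le s$). For the second inequality I would fix $\varepsilon>0$, pick $u\in S_{G(\cdot)}(K;B(x_0,2s))$ with $\int_{B(x_0,2s)}G(x,|\nabla u|)\,\mathrm{d}x\le \textsl{cap}_{G(\cdot)}(K;B(x_0,2s))+\varepsilon$, and replace $u$ by $\min(u^+,1)$: this stays admissible and, since $G(x,\cdot)$ is nondecreasing and $|\nabla\min(u^+,1)|\le|\nabla u|$, does not increase the energy, so we may assume $0\le u\le 1$. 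Choose a cut-off $\eta\in C^\infty_0(B(x_0,2r))$ with $0\le\eta\le1$, $\eta\equiv1$ on $B(x_0,\tfrac{3}{2}r)\supset K$ and $|\nabla\eta|\le C/r$, and set $v:=\eta u$; then $v$ has compact support in $B(x_0,2r)$, lies in $W^{1,G(\cdot)}_0(B(x_0,2r))$ (a bounded $W^{1,G(\cdot)}$-function times a Lipschitz one), and $v=u\ge1$ on $K$, so $v$ is admissible for $\textsl{cap}_{G(\cdot)}(K;B(x_0,2r))$. Since $\nabla v=\eta\nabla u+u\nabla\eta$, one has $|\nabla v|=|\nabla u|$ on $\{\nabla\eta=0\}$ and $|\nabla v|\le|\nabla u|+C/r$ on the annulus $A:=\{\nabla\eta\ne0\}\subset B(x_0,2r)\setminus B(x_0,\tfrac{3}{2}r)$; by convexity of $G(x,\cdot)$ and the $\Delta_2$-type bound $G(x,2t)\le 2^{g^0}G(x,t)$ furnished by $(SC)$, this gives $G(x,|\nabla v|)\le 2^{g^0-1}G(x,|\nabla u|)+2^{g^0-1}G(x,C/r)$ on $A$. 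Integrating over $B(x_0,2r)\subset B(x_0,2s)$ and then letting $\varepsilon\to0$ reduces the claim to the single estimate
$$\int_{A}G(x,C/r)\,\mathrm{d}x\le C's^{n}.$$

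This last estimate is the heart of the argument and the step I expect to be the main obstacle. The annulus $A$ satisfies $|A|\le c_n r^n\le c_n s^n$, so what must be controlled is the size of $G(x,C/r)$ for a.e.\ $x\in A\subset B(x_0,2r)$, and this is exactly the role of $(A_0)$ and $(A_1)$: $(A_0)$ anchors $G(\cdot,1)$, while $(A_1)$ forces $G(\cdot,t)$ to be comparable over $B(x_0,2r)$ precisely in the range of $t$ that the relative capacity ``sees'' at scale $r$. I would verify, using $(A_0)$ together with the growth relations for $\sigma\ge1$ and $\sigma\le1$, that $t=C/r$ lies in that range — disposing separately of the harmless case where $2r$ is not small, in which $s^n$ is bounded below and a crude bound suffices — and then deduce the displayed inequality. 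The remainder of the proof is the bookkeeping indicated above.
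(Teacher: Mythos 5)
Your parts i) and ii) are fine (the paper simply cites \cite{ref2} for them), as is the first inequality in iii) via monotonicity. The gap is in the second inequality of iii): the estimate you reduce everything to, namely $\int_A G(x,C/r)\,\mathrm{d}x\le C's^{n}$, is false in general. By $(SC)$ and $(A_0)$, for $C/r\ge 1$ one has $G(x,C/r)\ge (C/r)^{g_0}G(x,1)\ge c\,r^{-g_0}$, so $\int_A G(x,C/r)\,\mathrm{d}x\gtrsim r^{n-g_0}$, which for small $r$ is much larger than $s^{n}\sim r^{n}$ (for $G(x,t)=t^{p}$ it is of order $r^{n-p}$). The conditions $(A_0)$ and $(A_1)$ only make $G(\cdot,C/r)$ \emph{comparable across the ball}; they do not make it bounded, so no verification that $t=C/r$ lies in the ``right range'' can rescue this step. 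The loss happens when you truncate $u$ and then bound $u\le 1$ on the annulus: for a compact $K$ of small capacity the near-minimizer $u$ is itself small on $A$, and discarding that decay replaces a quantity controlled by $\int G(x,|\nabla u|)$ with one bounded below independently of $K$. Your route therefore only yields $\textsl{cap}_{G(\cdot)}(K;B(x_0,2r))\le C\bigl(\textsl{cap}_{G(\cdot)}(K;B(x_0,2s))+\int_A G(x,C/r)\,\mathrm{d}x\bigr)$, which is strictly weaker than the claim.

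The paper avoids this by keeping the factor $u$ in the cross term: it bounds $\int G(x,u|\nabla\eta|)\,\mathrm{d}x\le\int_{B(x_0,4r)}G(x,Cu/r)\,\mathrm{d}x$ and then invokes the modular Poincar\'e inequality (Theorem 2.1), which gives $\int_{B(x_0,4r)}G\bigl(x,\tfrac{u}{8r}\bigr)\,\mathrm{d}x\le C\bigl(\int_{B(x_0,4r)}G(x,|\nabla u|)\,\mathrm{d}x+|\{\nabla u\neq 0\}\cap B(x_0,4r)|\bigr)\le C\bigl(\int G(x,|\nabla u|)\,\mathrm{d}x+r^{n}\bigr)$. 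This is precisely where $(A_0)$ and $(A_1)$ enter and where the additive $r^{n}$ (hence $s^{n}$) term originates. Replacing your final step by this Poincar\'e argument (after which the truncation of $u$ becomes unnecessary) repairs the proof; the rest of your bookkeeping, including the convexity and doubling estimates for $G(x,\cdot)$, is consistent with what the paper does.
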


\begin{proof}
For i) and ii), we can see $\cite{ref2}$.\\
iii) Since the first inequality is trivial, it suffices to verify the second inequality in the extremal case $s = 2r$. Let $\eta \in C^\infty_c(B(x,2r))$ such that $0\leq \eta \leq 1$, $\eta =1$ in $B(x,r)$ and $|\nabla \eta| \leq \displaystyle \frac{C}{r}$. If $u \in S_{G(\cdot)}(K;B(x,4r))$, then $\eta u \in S_{G(\cdot)}(K;B(x,2r))$, so by Theorem $2.1$, we have
$$\begin{array}{ll}
\textsl{cap}_{G(\cdot)}(K; B(x_0,2r)) & \leq \displaystyle \int_{B(x_0,2r)} G(x, |\nabla \eta u|) \, \mathrm{d}x\\
& \leq C\left(\displaystyle \int_{B(x_0,2r)} G(x, \eta |\nabla u|) \, \mathrm{d}x + \displaystyle \int_{B(x_0,2r)} G(x, u |\nabla \eta |) \, \mathrm{d}x\right)\\
& \leq C\left(\displaystyle \int_{B(x_0,4r)} G(x, |\nabla u|) \, \mathrm{d}x + \displaystyle \int_{B(x_0,4r)} G(x, \frac{u}{r}) \, \mathrm{d}x\right)\\
& \leq C\left(\displaystyle \int_{B(x_0,4r)} G(x, |\nabla u|) \, \mathrm{d}x + r^n\right)\\
\end{array}$$
Taking the infimum over all such functions $u$, we obtain
$$\textsl{cap}_{G(\cdot)}(K; B(x_0,2r)) \leq C \left(\textsl{cap}_{G(\cdot)}(K; B(x_0,4r)) + r^{n}\right).$$
This concludes the proof.
\end{proof}

\section{ $G(\cdot)$-Laplace equation}

Let $G(\cdot) \in \Phi(\Omega)$, we consider the following $G(\cdot)$-Laplace equation
\begin{equation}
 - \text{div} \frac{g(x,|\nabla u|)}{|\nabla u|}\nabla u = 0.
\end{equation}

\begin{df}
A function $h \in W^{1,G(\cdot)}(\Omega)$ is $G(\cdot)$-harmonic in $\Omega$ if it is continuous and $G(\cdot)$-solution to equation $(3.1)$ in $\Omega$ i.e
$$ \int_{\Omega} \frac{g(x,|\nabla h|)}{|\nabla h|} \nabla h \cdot \nabla \varphi \, \mathrm{d}x = 0,$$
whenever  $\varphi \in  W^{1,G(\cdot)}_0(\Omega)$.
\end{df}

\begin{df}
A function $u \in W^{1,G(\cdot)}(\Omega)$ is a $G(\cdot)$-supersolution (resp, $G(\cdot)$-subsolution) to equation $(3.1)$ in $\Omega$ if
$$ \int_{\Omega} \frac{g(x,|\nabla u|)}{|\nabla u|} \nabla u \cdot \nabla \varphi \, \mathrm{d}x \geq 0 \; \; (\text{resp,} \leq 0),$$
whenever  $\varphi \in W^{1,G(\cdot)}_0(\Omega)$ and nonnegative.
\end{df}

Given $v_0 \in W^{1,G(\cdot)}(\Omega)$ and $\psi$: $\Omega \to [-\infty, \infty]$ be any function. Construct the obstacle set:
$$ \mathcal{K}_{\psi , v_0}(\Omega) = \{ u \in W^{1,G(\cdot)}(\Omega) \ : \; u \leq \psi, \; \; \text{a.e in} \; \Omega \; \; \text{and} \; \; u - v_0 \in W_0^{1,G(.)}(\Omega) \}.$$
By Theorem $4.1$ in $\cite{ref5}$, if  $\mathcal{K}_{\psi , v_0}(\Omega)$ is not empty then there exits $ u \in \mathcal{K}_{\psi , v_0}(\Omega)$ such that
$$  \displaystyle \int_{\Omega} \frac{g(x,|\nabla u|)}{|\nabla u|} \nabla u \cdot (\nabla u - \nabla v) \, \mathrm{d}x \geq 0,$$
whenever $v \in \mathcal{K}_{\psi , v_0}(\Omega)$. We said $u$ is a solution of the obstacle problem.

\begin{Th}
Let $ G(\cdot) \in \Phi(\Omega)$ satisfies $(SC)$. Then for every $v_0 \in W^{1,G(\cdot)}(\Omega)$, there exists $u \in W^{1,G(\cdot)}(\Omega)$ a $G(\cdot)$-solution to equation $(3.1)$ in $\Omega$, such that $u - v_0 \in W^{1,G(\cdot)}_0(\Omega)$.\\
If $G(\cdot)$ is strictly convex and satisfies $(A_0)$, the $G(\cdot)$-solution is unique, and if $(A_1)$, $(A_{1,n})$ hold, then it is continuous.
\end{Th}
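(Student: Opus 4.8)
The plan is to treat the three assertions in turn: existence via the obstacle problem recalled just above (with a vacuous obstacle), uniqueness via strict monotonicity of the operator, and continuity via the De Giorgi--Moser machinery under the full set of structure conditions; the last of these is where the real work lies.

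\emph{Existence.} I would run the obstacle problem with $\psi\equiv+\infty$, so that the admissible class is $\mathcal{K}_{\psi,v_0}(\Omega)=\{u\in W^{1,G(\cdot)}(\Omega):u-v_0\in W^{1,G(\cdot)}_0(\Omega)\}$, which is non-empty since it contains $v_0$. The quoted solvability of the obstacle problem then furnishes $u\in\mathcal{K}_{\psi,v_0}(\Omega)$ with
$$\int_{\Omega}\frac{g(x,|\nabla u|)}{|\nabla u|}\,\nabla u\cdot(\nabla u-\nabla v)\,\mathrm{d}x\ge 0\qquad\text{for every }v\in\mathcal{K}_{\psi,v_0}(\Omega).$$
For an arbitrary $\varphi\in W^{1,G(\cdot)}_0(\Omega)$ both $v=u+\varphi$ and $v=u-\varphi$ are admissible (the obstacle constraint is vacuous and $v-v_0=(u-v_0)\pm\varphi\in W^{1,G(\cdot)}_0(\Omega)$); inserting them yields the two opposite inequalities, hence $\int_{\Omega}\frac{g(x,|\nabla u|)}{|\nabla u|}\nabla u\cdot\nabla\varphi\,\mathrm{d}x=0$, i.e. $u$ is a $G(\cdot)$-solution to $(3.1)$ with $u-v_0\in W^{1,G(\cdot)}_0(\Omega)$. (Equivalently, one could minimise the convex, coercive, weakly lower semicontinuous functional $v\mapsto\int_{\Omega}G(x,|\nabla v|)\,\mathrm{d}x$ over $v_0+W^{1,G(\cdot)}_0(\Omega)$ --- coercivity and reflexivity of the space being provided by $(SC)$ through $(2.2)$ --- and recognise $(3.1)$ as its Euler--Lagrange equation.)

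\emph{Uniqueness.} Given two $G(\cdot)$-solutions $u_1,u_2$ with $u_i-v_0\in W^{1,G(\cdot)}_0(\Omega)$, I would use $\varphi:=u_1-u_2\in W^{1,G(\cdot)}_0(\Omega)$ as a test function in the equation for each $u_i$ and subtract, obtaining
$$\int_{\Omega}\left(\frac{g(x,|\nabla u_1|)}{|\nabla u_1|}\nabla u_1-\frac{g(x,|\nabla u_2|)}{|\nabla u_2|}\nabla u_2\right)\cdot(\nabla u_1-\nabla u_2)\,\mathrm{d}x=0.$$
The field $\xi\mapsto\frac{g(x,|\xi|)}{|\xi|}\xi$ is the $\xi$-gradient of $\xi\mapsto G(x,|\xi|)$, so strict convexity of $G(x,\cdot)$ makes it strictly monotone; the integrand is then non-negative and vanishes only where $\nabla u_1=\nabla u_2$, forcing $\nabla(u_1-u_2)=0$ a.e. Since $\Omega$ is a domain, $u_1-u_2$ is a.e. constant, and $(A_0)$ together with $(SC)$ gives $L^{G(\cdot)}(\Omega)\hookrightarrow L^1(\Omega)$ for bounded $\Omega$, so $u_1-u_2\in W^{1,1}_0(\Omega)$ and the constant must be $0$.

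\emph{Continuity.} This is the substantive step, and the one I expect to be the main obstacle. Under the extra hypotheses $(A_1)$ and $(A_{1,n})$, the plan is to run the De Giorgi--Moser scheme for generalized Orlicz growth: first, a Caccioppoli estimate for $G(\cdot)$-sub/supersolutions, combined with the Sobolev--Poincar\'e inequality of Theorem $2.1$ (which uses $(A_0)$ and $(A_1)$) and the scaling $(2.2)$--$(2.3)$, to get local boundedness by iteration on concentric balls; then a weak Harnack inequality for non-negative supersolutions --- where $(A_{1,n})$ is precisely what renders the constant independent of the radius --- which together with local boundedness yields the full Harnack inequality for $G(\cdot)$-harmonic functions; finally, applying Harnack to $M(2r)-u$ and $u-m(2r)$ on balls $B(x_0,2r)\subset\Omega$ to produce a geometric oscillation decay $\operatorname{osc}_{B(x_0,r)}u\le\lambda\,\operatorname{osc}_{B(x_0,2r)}u+Cr^{\alpha}$ with $\lambda\in(0,1)$, hence local (H\"older) continuity. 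The hard part is the scale-invariant Harnack inequality valid under all four structure conditions; that estimate, and the boundedness and oscillation-decay steps around it, can be imported from the regularity theory for $G(\cdot)$-harmonic functions developed in \cite{ref5, ref12, ref13} to close the argument.
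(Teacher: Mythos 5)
Your proposal is correct, and on the main (existence) part it is exactly the paper's argument: run the obstacle problem of Section 3 with the vacuous obstacle $\psi\equiv\infty$, note $\mathcal{K}_{\infty,v_0}(\Omega)\ni v_0$, and test the variational inequality with $v=u\pm\varphi$ to turn it into the equation. The differences are in how the second sentence of the theorem is handled. For uniqueness the paper simply invokes the weak comparison principle (Lemma 4.3 of \cite{ref5}), whereas you give the direct strict-monotonicity argument: test both equations with $\varphi=u_1-u_2\in W^{1,G(\cdot)}_0(\Omega)$, use that strict convexity of $t\mapsto G(x,t)$ (together with its strict monotonicity under $(SC)$) makes $\xi\mapsto\frac{g(x,|\xi|)}{|\xi|}\xi$ strictly monotone, conclude $\nabla(u_1-u_2)=0$, and kill the constant via $W^{1,G(\cdot)}_0$ and $(A_0)$. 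This is more self-contained than the paper's citation and is essentially the proof behind that comparison principle, so it buys independence from \cite{ref5} at the cost of a few lines (you should just make sure the test function is admissible, i.e. that $g(x,|\nabla u_i|)\,|\nabla\varphi|$ is integrable, which follows from Young's inequality and $(2.5)$). For continuity the paper cites a single external result (Corollary 4.1 of \cite{ref4}: a locally bounded $G(\cdot)$-solution is locally H\"older continuous under $(A_1)$, $(A_{1,n})$), while you sketch the De Giorgi--Moser/Harnack route and propose to import the scale-invariant Harnack and boundedness estimates from the literature; that is the same move in substance, since the estimates you describe are precisely the content of \cite{ref4} (and \cite{ref3}), so your outline is acceptable provided you cite that Harnack theory rather than re-deriving it, and you rightly flag, as the paper's citation also implicitly requires, that local boundedness of the solution must be secured first.
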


\begin{proof}
Let $ G(\cdot) \in \Phi(\Omega)$ satisfies $(SC)$ and $v_0 \in W^{1,G(\cdot)}(\Omega)$ so $\mathcal{K}_{\infty , v_0}(\Omega) \neq \emptyset$. Then there exists a solution $u$ of the obstacle problem in $\mathcal{K}_{\infty , v_0}(\Omega)$.\\
Let  $\varphi \in W_0^{1,G(\cdot)}(\Omega)$ then  $u - \varphi, u + \varphi \in \mathcal{K}_{\infty , v_0}(\Omega)$.\\
Hence
$$ \int_{\Omega} \lfrac{g(x,|\nabla u|)}{|\nabla u|} \nabla u \cdot \nabla \varphi \, \mathrm{d}x \geq 0$$
and
$$ -\int_{\Omega} \lfrac{g(x,|\nabla u|)}{|\nabla u|} \nabla u \cdot \nabla \varphi  \, \mathrm{d}x \geq 0.$$
Consequently
$$ \int_{\Omega} \lfrac{g(x,|\nabla u|)}{|\nabla u|} \nabla u \cdot \nabla \varphi  \, \mathrm{d}x = 0$$
whenever $\varphi \in W_0^{1,G(\cdot)}(\Omega).$ Then $u$ is a $G(\cdot)$-solution to equation $(3.1)$ in $\Omega$ such that $u - v_0 \in W^{1,G(\cdot)}_0(\Omega)$.\\ 
When $ G(\cdot)$ is strictly convex and satisfies $(A_0)$, using the comparison weak principle Lemma $4.3$ in $\cite{ref5}$ the $G(\cdot)$-solution is unique. If $G(\cdot)$ satisfy $(A_1)$, $(A_{1,n})$ by Corollary $4.1$ in $\cite{ref4}$ a locally bounded $G(\cdot)$-solution is locally Hölder continuous. This concludes the proof.
\end{proof}

\section{Sobolev $G(\cdot)$-regular boundary points and exterior sphere condition}

\begin{df}
Let $G(\cdot) \in \Phi(\Omega)$  strictly convex and satisfy $(SC)$, $(A_0)$,  $(A_1)$ and $(A_{1,n})$. A boundary point $x_0$ of a bounded open set $\Omega$ is said to be Sobolev $G(\cdot)$-regular if, for each function $v_0 \in W^{1,G(\cdot)}(\Omega) \cap C(\overline{\Omega})$, the $G(\cdot)$-harmonic function $h$ in $\Omega$ with $h - v_0 \in W_0^{1,G(\cdot)}(\Omega)$ satisfies
$$\lim_{x \to x_0}h(x) = h(x_0).$$
Furthermore, we say that a bounded open set $\Omega$ is Sobolev $G(\cdot)$-regular if each $x_0 \in \partial \Omega$.
\end{df}

In $\cite{ref13}$, Harjulehto and H\"ast\"o gave the following sufficient condition for the Sobolev $G(\cdot)$-regular point.

\begin{Th}
Let $x_0 \in \partial \Omega$. Let $ G(\cdot) \in \Phi(\R^n)$ be strictly convex and satisfy $(SC)$, $(A_0)$, $(A_1)$, and $(A_{1,n})$. If there exists $C \in (0,1)$ and $R > 0$ such that
$$\textsl{cap}_{G(\cdot)}(B(x_0,r) \backslash \Omega ; B(x_0,2r))\geq C \textsl{cap}_{G(\cdot)}(B(x_0,r) ; B(x_0,2r))  \quad \text{for all} \quad 0 <r<R.$$
Then $x_0$ is a Sobolev $G(\cdot)$-regular point.
\end{Th}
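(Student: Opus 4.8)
The plan is to show that the capacity density (Wiener-type) condition forces the Sobolev $G(\cdot)$-harmonic solution $h$ with data $v_0$ to attain the boundary value $v_0(x_0)$ at $x_0$, using a barrier-type argument built from $G(\cdot)$-potentials and the comparison principle. First I would normalize: replacing $v_0$ by $v_0 - v_0(x_0)$ and using continuity of $v_0$ on $\overline\Omega$, it suffices to prove that for every $\varepsilon>0$ there is a neighborhood of $x_0$ on which $|h|<\varepsilon$; by the comparison principle (Lemma 4.3 of [ref5], available since $G(\cdot)$ is strictly convex and satisfies $(A_0)$) and symmetry $h \leftrightarrow -h$, it is enough to produce an upper barrier, i.e. a $G(\cdot)$-supersolution $w$ in $\Omega\cap B(x_0,R)$ with $w\ge 0$, $w\ge \sup|v_0|$ on the part of the boundary away from $x_0$, and $\limsup_{x\to x_0} w(x)$ small.

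The heart of the argument is the construction of this barrier from the $G(\cdot)$-potentials of the sets $E_j := B(x_0,2^{-j})\setminus\Omega$. For each dyadic annulus I would take the capacitary potential $u_j$ of $E_j$ relative to $B(x_0,2^{-j+1})$ (a $G(\cdot)$-supersolution equal to $1$ on $E_j$, small far from it), and sum a suitably weighted tail $w=\sum_{j\ge j_0} c_j u_j$. The density hypothesis
$$\textsl{cap}_{G(\cdot)}(B(x_0,r)\setminus\Omega;B(x_0,2r)) \ge C\,\textsl{cap}_{G(\cdot)}(B(x_0,r);B(x_0,2r))$$
is exactly what is needed to compare each $\textsl{cap}_{G(\cdot)}(E_j;\cdot)$ from below by the capacity of the full ball, which in turn—via the structure conditions $(SC)$, $(A_0)$, $(A_1)$, $(A_{1,n})$ and the growth estimates (2.2)–(2.3)—is comparable to $\,G^-_{B}(2^{j})\,2^{-jn}$, an explicit quantity. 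Using the pointwise Wolff-type lower bound for $G(\cdot)$-potentials (the estimate that each $u_j$ is bounded below near $x_0$ by a fixed fraction, because its Wolff potential at $x_0$ diverges in the summed tail), one gets that $w(x)\to$ a controlled small value as $x\to x_0$, while $w$ remains a supersolution by the standard fact that a locally bounded sum (or the regularized infimum of truncations) of supersolutions is a supersolution. Comparing $h$ with $\varepsilon + \|v_0\|_\infty\, w$ on $\Omega\cap B(x_0,R)$ via Theorem 3.1 / the comparison principle then yields $\limsup_{x\to x_0} h(x)\le \varepsilon$, and the symmetric bound finishes the proof.

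The main obstacle I anticipate is the quantitative capacity estimate: showing that the relative $G(\cdot)$-capacity of a ball $\textsl{cap}_{G(\cdot)}(B(x_0,r);B(x_0,2r))$ is bounded below by a constant times $G^-_{B(x_0,2r)}\!\big(1/r\big)\,r^{n}$ (equivalently $\int_{B} G(x,1/r)\,dx$), and that this lower bound interacts correctly with $g^{-1}$ so that the finiteness/divergence of the Wiener sum $\int_0^\rho g^{-1}\!\big(x_0, \textsl{cap}(\cdot)/t^{n-1}\big)\,dt$ matches the summability of the barrier series $\sum c_j$. This is where $(A_{1,n})$ (in the flexible form of Lemma 2.1, allowing the argument $t\in[r,s/R]$) is essential: it lets one freely transfer the value $G(x,\cdot)$ between the running point $x$ and the fixed center $x_0$ at the relevant scales $t\sim 2^{j}$, $R\sim 2^{-j}$, so that all the "$G^-_B$" and "$G(x_0,\cdot)$" quantities are comparable with constants independent of $j$. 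Once that dictionary between capacity, $\Phi$-function growth, and the integral condition is set up—borrowing the potential estimates and Wolff inequality from [ref6]—the barrier comparison is routine. A secondary point to handle carefully is that the density condition is only assumed for $r<R$, so the barrier is built only in $B(x_0,R)\cap\Omega$ and one must patch with the constant $\|v_0\|_\infty$ on the rest of $\partial(\Omega\cap B(x_0,R))$, which is where the nonnegativity and the $\ge 1$-on-$E_j$ normalization of the potentials is used.
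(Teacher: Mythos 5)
You should first note that the paper does not prove this statement at all: Theorem 4.1 is quoted verbatim from Harjulehto--H\"ast\"o \cite{ref13}, so there is no in-paper proof to compare with, and your attempt has to stand on its own. As written it does not, because of one step that is genuinely false in this nonlinear setting: you build the barrier as a weighted sum $w=\sum_{j\ge j_0}c_j u_j$ of capacitary $G(\cdot)$-potentials and justify it by ``the standard fact that a locally bounded sum of supersolutions is a supersolution.'' That fact is a purely linear phenomenon. For $-\Delta_{G(\cdot)}$ (already for the $p$-Laplacian) the class of supersolutions is closed under pointwise minima, truncations and increasing limits, but \emph{not} under addition, and this is precisely why every nonlinear Wiener-type argument (Maz'ya, Kilpel\"ainen--Mal\'y, and Theorem 6.2 of this paper) replaces superposition of barriers by an iteration of one-scale estimates on a single potential. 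With the summation step gone, the comparison of $h$ with $\varepsilon+\|v_0\|_\infty w$ has no supersolution to compare against, and the proof collapses.

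The salvageable and essentially correct part of your plan is the quantitative capacity input: by Lemma 4.1, $(SC)$, $(A_0)$, $(A_{1,n})$ and the density hypothesis, $\text{cap}_{G(\cdot)}(B(x_0,r)\setminus\Omega;B(x_0,2r))$ is comparable to $r^{n}G(x_0,1/r)\approx r^{n-1}g(x_0,1/r)$, so combining Lemma 6.4 with Theorem 6.1 gives a single-scale bound $\inf_{B(x_0,r/2)}u_r\ge c>0$ with $c$ independent of $r<R$ for the potential $u_r$ of $B(x_0,r)\setminus\Omega$. The correct continuation is then an iteration: this one-scale bound yields an oscillation contraction of the form $\operatorname*{osc}_{B(x_0,r/2)\cap\Omega}h\le(1-c)\operatorname*{osc}_{B(x_0,2r)\cap\Omega}h+\omega_{v_0}(2r)$, and since the density condition holds uniformly for all $r<R$, iterating over dyadic scales gives geometric decay of the oscillation and hence $\lim_{x\to x_0}h(x)=v_0(x_0)$. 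In particular there is no delicate matching of $\sum_j c_j$ with the Wiener integral to be done---under the uniform density assumption the integrand is bounded below by $C/t$ anyway---so that part of your plan is a red herring; what is needed is a fixed contraction factor per scale, not summability of a tail. A secondary point to treat carefully (which your sketch glosses over) is that $h$ is only a Sobolev solution, so the comparison on $\partial(\Omega\cap B(x_0,R))$ must be formulated in the $W^{1,G(\cdot)}_0$ sense rather than pointwise.
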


\begin{df}
We say that a boundary point $x_0$ of a bounded open set $\Omega$ satisfies the exterior sphere condition, if there is a ball $B(y_0, \rho)$ such that $B(y_0, \rho) \cap \overline{\Omega} = \{x_0\}$.\\
Furthermore, we say that a bounded open set $\Omega$  satisfies the exterior sphere condition if each $x_0 \in \partial \Omega$.
\end{df}

\begin{lm}
Let $ G(\cdot) \in \Phi(\sigma B)$ with $\sigma > 1$ satisfies $(SC)$. Then there exits a positive constant $C = C(n,g^0,g_0,\sigma)$ such that
$$\frac{1}{C}|B|G^-_{\sigma B}\left(\frac{1}{r}\right) \leq \textsl{cap}_{G(\cdot)}(B; \sigma B) \leq C|B|G^+_{\sigma B}\left(\frac{1}{r}\right).$$
\end{lm}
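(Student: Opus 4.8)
The plan is to estimate $\textsl{cap}_{G(\cdot)}(B;\sigma B)$ from above and below by comparing with the classical Sobolev capacity of concentric balls, using the structure condition $(SC)$ to pass between the modular $\int G(x,|\nabla u|)$ and powers of $|\nabla u|$. Write $B = B(x_0,r)$ and $\sigma B = B(x_0,\sigma r)$.

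For the \emph{upper bound}, I would take the standard test function: pick $\eta \in C^\infty_c(\sigma B)$ with $\eta = 1$ on $B$, $0\le\eta\le 1$, and $|\nabla\eta|\le C/r$ (possible since $\sigma>1$, with $C$ depending on $\sigma$). Then $\eta \in S_{G(\cdot)}(\overline B;\sigma B)$ (after the usual remark that the capacity of a closed ball equals that of the compact ball), so
$$\textsl{cap}_{G(\cdot)}(B;\sigma B) \le \int_{\sigma B} G(x,|\nabla\eta|)\,\mathrm{d}x \le \int_{\sigma B} G(x,C/r)\,\mathrm{d}x.$$
Using $(SC)$ in the form (2.2)--(2.3) to absorb the constant $C$ (which costs a factor $C^{g^0}$ or $C^{g_0}$ depending on whether $C/r \gtrless 1/r$, i.e. depending on whether $C\gtrless 1$), and bounding $G(x,1/r)\le G^+_{\sigma B}(1/r)$ pointwise, one gets $\textsl{cap}_{G(\cdot)}(B;\sigma B) \le C\,|\sigma B|\,G^+_{\sigma B}(1/r) = C\,|B|\,G^+_{\sigma B}(1/r)$, the last equality folding $\sigma^n$ into $C$.

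For the \emph{lower bound}, let $u \in S_{G(\cdot)}(B;\sigma B)$ be arbitrary. The idea is that any such $u$ must have a definite amount of gradient: $u \ge 1$ on $B$ and $u \in W^{1,G(\cdot)}_0(\sigma B)$, so $u$ drops from $1$ to (an average of) $0$ across the annulus, forcing $\int_{\sigma B}|\nabla u|\,\mathrm{d}x$ to be bounded below — more precisely, by the (ordinary, $L^1$) Sobolev/Poincaré inequality on $\sigma B$, or by a direct ray-integration argument, $\fint_{\sigma B}|\nabla u|\,\mathrm{d}x \ge c/r$ where $c = c(n,\sigma)$. Now I would apply Jensen's inequality to the convex function $G^-_{\sigma B}$ (note $t\mapsto G^-_{\sigma B}(t)=\inf_x G(x,t)$ is convex as an infimum over $t$ of convex functions? — careful: an infimum of convex functions need not be convex, so instead use that $G^-_{\sigma B}$ satisfies the same doubling-type estimates (2.2)--(2.3) with the \emph{same} $g_0,g^0$, which is what actually gets used):
$$\int_{\sigma B} G(x,|\nabla u|)\,\mathrm{d}x \ge \int_{\sigma B} G^-_{\sigma B}(|\nabla u|)\,\mathrm{d}x \ge |\sigma B|\,G^-_{\sigma B}\!\left(\fint_{\sigma B}|\nabla u|\,\mathrm{d}x\right),$$
using Jensen for the genuinely convex... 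This is the delicate point. The clean route is: by $(SC)$, $G^-_{\sigma B}(t)$ is comparable to $t\,g^-(\cdot)$... Rather, I would argue via the $L^{g_0}$ lower bound — since $(SC)$ gives $G(x,t)\ge G^-_{\sigma B}(1)\,t^{g_0}$ for $t\ge 1$ and a matching bound for $t\le 1$, combined with $(A_0)$ to control $G^-_{\sigma B}(1)\approx 1$ — to reduce to the classical estimate $\textsl{cap}_{g_0}(B;\sigma B)\approx r^{n-g_0}$, and then convert $r^{n-g_0} \approx |B|\,(1/r)^{g_0} \approx |B|\,G^-_{\sigma B}(1/r)$ using (2.2)--(2.3) once more. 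The main obstacle is exactly this bookkeeping: justifying that one may replace $G(x,\cdot)$ by the $x$-independent bounds $G^-_{\sigma B}$, $G^+_{\sigma B}$ without losing the scaling in $r$, and handling the two regimes $r\le 1$ and $r>1$ (equivalently $1/r\gtrless 1$) where the exponents $g_0$ and $g^0$ swap roles in (2.2)--(2.3). Once the estimate is phrased in terms of $G^\pm_{\sigma B}(1/r)$ rather than $r^{n-g_0}$, no condition like $(A_1)$ or $(A_{1,n})$ is needed — only $(SC)$ — which matches the hypotheses of the lemma.
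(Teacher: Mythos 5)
Your upper bound is exactly the paper's argument and is fine. The gap is in the lower bound. You correctly identify the delicate point --- $G^-_{\sigma B}=\inf_x G(x,\cdot)$ need not be convex, so plain Jensen is not directly available --- but the alternative route you then adopt does not prove the stated inequality. Reducing to the constant-exponent capacity $\textsl{cap}_{g_0}(B;\sigma B)\approx r^{n-g_0}$ can at best give a bound of the form $\textsl{cap}_{G(\cdot)}(B;\sigma B)\ge c\,|B|\,G^-_{\sigma B}(1)\,(1/r)^{g_0}$, and your final conversion $(1/r)^{g_0}\approx G^-_{\sigma B}(1/r)$ is false: for $r\le 1$, inequalities (2.2)--(2.3) give only the one-sided sandwich $(1/r)^{g_0}G^-_{\sigma B}(1)\le G^-_{\sigma B}(1/r)\le (1/r)^{g^0}G^-_{\sigma B}(1)$, so the comparison you need goes the wrong way. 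The example $G(x,t)=t^{g^0}$ (with $g_0<g^0$ in $(SC)$) makes this concrete: your chain would deduce $\textsl{cap}_{G(\cdot)}(B;\sigma B)\gtrsim |B|\,r^{-g^0}$ from information only of size $|B|\,r^{-g_0}$. There is also a regime problem in the reduction itself (where $|\nabla u|\le 1$, $(SC)$ only yields $G(x,|\nabla u|)\ge G^-_{\sigma B}(1)|\nabla u|^{g^0}$, not the exponent $g_0$), and the appeal to $(A_0)$ is inconsistent both with the hypotheses of the lemma and with your own closing claim that only $(SC)$ is used.

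The paper's proof is your first instinct, made rigorous. Any admissible $u$ for $\textsl{cap}_{G(\cdot)}(B;\sigma B)$ is admissible for the $1$-capacity, so $\frac{1}{|\sigma B|}\int_{\sigma B}|\nabla u|\,\mathrm{d}x\ \ge\ \textsl{cap}_{1}(B;\sigma B)/|\sigma B|\ \approx\ 1/r$ by Example $2.12$ in $\cite{ref14}$; one then applies the Jensen-type inequality of $\cite{ref12}$ to $G^-_{\sigma B}$. This is legitimate without convexity of the infimum: since each $G(x,\cdot)$ is convex with $G(x,0)=0$, the quotient $t\mapsto G^-_{\sigma B}(t)/t=\inf_x G(x,t)/t$ is increasing, so $G^-_{\sigma B}$ is equivalent to a convex $\Phi$-function and Jensen holds up to a multiplicative constant inside the argument. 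That constant, together with the constant from the $1$-capacity, is absorbed at scale $1/r$ by (2.2)--(2.3), at the price of a factor depending only on $n,g_0,g^0,\sigma$. Because every constant enters inside $G^-_{\sigma B}$ at the argument $1/r$, and never as a stray factor $G^-_{\sigma B}(1)$, no $(A_0)$ is needed --- which is precisely how the lemma gets away with $(SC)$ alone.
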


\begin{proof}
Let  $u \in W_0^{1,G(\cdot)}(\sigma B)$ be such that $0 \leq u \leq 1$, $u=1$ in $B$ and $|\nabla u | \leq \lfrac{C}{r}$. Then by the condition $(SC)$, we have
$$\textsl{cap}_{G(\cdot)}(B; \sigma B) \leq \displaystyle \int_{\sigma B} G(x, |\nabla u|) \, \mathrm{d}x \leq \displaystyle \int_{\sigma B} G_{\sigma B}^+\left(\lfrac{C}{r}\right) \, \mathrm{d}x \leq C |B| G_{\sigma B}^+\left(\lfrac{1}{r}\right).$$
For the opposite inequality by Jensen-type inequality in $\cite{ref12}$ and the definition of $1$-capacity that 
$$\begin{array}{ll}
\displaystyle \int_{\sigma B} G(x, |\nabla u|) \, \mathrm{d}x & \geq \displaystyle \int_{\sigma B} G_{\sigma B}^-(|\nabla u|) \, \mathrm{d}x\\
& = |\sigma B| \displaystyle \Xint-_{\sigma B} G_{\sigma B}^-(|\nabla u|) \, \mathrm{d}x\\
& \geq |\sigma B| G_{\sigma B}^-\left( \displaystyle \Xint-_{\sigma B} |\nabla u| \, \mathrm{d}x\right)\\
& \geq C |B| G_{\sigma B}^-\left(\lfrac{\textsl{cap}_{1}(B; \sigma B)}{|\sigma B|}\right)
\end{array}$$ 
Since by Example $ 2.12$ in $\cite{ref14}$ we have $\textsl{cap}_{1}(B; \sigma B) = Cr^{n-1}$, 
then by the condition $(SC)$, we get
$$\displaystyle \int_{\sigma B} G(x, |\nabla u|) \, \mathrm{d}x  \geq C |B| G_{\sigma B}^-\left( \lfrac{1}{r} \right).$$ 
This concludes the  proof. 
\end{proof}

\begin{Th}
Let $G(\cdot) \in \Phi(\Omega)$  strictly convex and satisfy $(SC)$, $(A_0)$,  $(A_1)$ and $(A_{1,n})$. If $\Omega$ satisfies the exterior sphere condition, then $\Omega$ is Sobolev $G(\cdot)$-regular.
\end{Th}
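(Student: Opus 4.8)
The plan is to deduce the statement from the capacitary fatness criterion of Theorem 4.1: fixing an arbitrary $x_0\in\partial\Omega$, it suffices to produce $C\in(0,1)$ and $R>0$ with
$$\textsl{cap}_{G(\cdot)}(B(x_0,r)\setminus\Omega;B(x_0,2r))\ \ge\ C\,\textsl{cap}_{G(\cdot)}(B(x_0,r);B(x_0,2r))\qquad\text{for all }0<r<R.$$
The exterior sphere condition furnishes a ball $B(y_0,\rho)$ with $B(y_0,\rho)\cap\overline\Omega=\{x_0\}$, so that $B(y_0,\rho)\setminus\{x_0\}\subset\Omega^\complement$. First I would record the elementary geometric fact that, with $\nu=(y_0-x_0)/\rho$ the inward unit normal and $z=x_0+\tfrac r2\nu$, one has for every $0<r<2\rho$ that $\overline{B(z,\tfrac r4)}\subset B(x_0,r)\cap B(y_0,\rho)$ and $x_0\notin\overline{B(z,\tfrac r4)}$, hence $\overline{B(z,\tfrac r4)}\subset B(x_0,r)\setminus\Omega$. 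By monotonicity of the relative capacity (Proposition 2.1 (ii)), the left-hand side above is then at least $\textsl{cap}_{G(\cdot)}(\overline{B(z,\tfrac r4)};B(x_0,2r))$.

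Next I would estimate both capacities by Lemma 4.1 (working with the extension of $G(\cdot)$ to $\Phi(\R^n)$, as in Theorem 4.1). Applying Lemma 4.1 with $B=B(x_0,r)$ and $\sigma=2$ bounds the right-hand side from above:
$$\textsl{cap}_{G(\cdot)}(B(x_0,r);B(x_0,2r))\ \le\ C\,|B(x_0,r)|\,G^+_{B(x_0,2r)}\!\big(\tfrac1r\big).$$
For the lower bound I would note $B(x_0,2r)\subset B(z,\tfrac52 r)$, so extending any admissible function for $\textsl{cap}_{G(\cdot)}(\overline{B(z,\tfrac r4)};B(x_0,2r))$ by zero shows this capacity is at least $\textsl{cap}_{G(\cdot)}(\overline{B(z,\tfrac r4)};B(z,\tfrac52 r))$; Lemma 4.1 with $B=B(z,\tfrac r4)$ and $\sigma=10$, together with $|B(z,\tfrac r4)|=c_n|B(x_0,r)|$ and the monotonicity of $t\mapsto G(x,t)$, then gives
$$\textsl{cap}_{G(\cdot)}(B(x_0,r)\setminus\Omega;B(x_0,2r))\ \ge\ \tfrac1C\,|B(x_0,r)|\,G^-_{B(z,5r/2)}\!\big(\tfrac1r\big).$$

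The decisive step is to compare $G^-_{B(z,5r/2)}(\tfrac1r)$ with $G^+_{B(x_0,2r)}(\tfrac1r)$, and this is precisely where $(A_{1,n})$ enters. Both balls lie in $B(x_0,3r)$, and choosing $R<\min\{2\rho,\tfrac13\}$ forces $3r<1$ and places $t=\tfrac1r$ inside an interval of the form $[r_1,s_1/(3r)]$ for suitable fixed $r_1,s_1>0$; the flexible characterization of $(A_{1,n})$ (Lemma 2.1) then gives $G(y,\tfrac1r)\le C\,G(x,\tfrac1r)$ for all $x,y\in B(x_0,3r)$, i.e. $G^+_{B(x_0,2r)}(\tfrac1r)\le C\,G^-_{B(z,5r/2)}(\tfrac1r)$. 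Combining the three displays gives the fatness inequality with some constant $C_0>0$, which may be taken in $(0,1)$ since $\textsl{cap}_{G(\cdot)}(B(x_0,r)\setminus\Omega;\,\cdot\,)\le\textsl{cap}_{G(\cdot)}(B(x_0,r);\,\cdot\,)$ in any case. Theorem 4.1 then shows that $x_0$ is Sobolev $G(\cdot)$-regular, and as $x_0\in\partial\Omega$ was arbitrary, $\Omega$ is Sobolev $G(\cdot)$-regular. The one real obstacle is the scale bookkeeping in this last comparison: the natural capacity bounds involve $G^-$ and $G^+$ over slightly different balls at the borderline scale $t=1/r$, so one has to invoke $(A_{1,n})$ through its flexible form (Lemma 2.1) with the correct parameters, keeping all balls inside a single ball of radius $\le1$; everything else is monotonicity of capacity and the geometry of the touching sphere.
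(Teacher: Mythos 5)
Your argument is correct and follows essentially the same route as the paper: reduce to the capacitary fatness criterion of Theorem 4.1, bound both capacities by the ball estimates of Lemma 4.1, and use $(A_{1,n})$ (through the flexible form of Lemma 2.1) to compare $G^+$ and $G^-$ at the borderline scale $t=1/r$. Your only deviation is the geometric bookkeeping — inscribing the ball $B(z,r/4)$ in the exterior sphere at every scale $r$, which makes the fatness inequality explicitly uniform for all $0<r<R$ (a point the paper handles only implicitly by working at the fixed exterior-sphere radius) — but this is a refinement of the same proof, not a different one.
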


\begin{proof}
Let $\Omega$ satisfies the exterior sphere condition. Then for every $x_0 \in \partial \Omega$ there exists a ball $B(y_0, r)$ such that $B(y_0, r) \cap \overline{\Omega} = \{x_0\}$. So we have $B(x_0,3r) \backslash \Omega$ contains $B(y_0, r)$. Then, by Proposition $2.1$ and Lemma $4.1$, we have
$$\begin{array}{ll}
\textsl{cap}_{G(\cdot)}(B(x_0,3r) \backslash \Omega ; B(x_0,6 r)) & \geq C \textsl{cap}_{G(\cdot)}(B(y_0,r) ; B(x_0,6r))\\
& \geq C \textsl{cap}_{G(\cdot)}(B(y_0,r) ; B(y_0,8r))\\
& \geq C |B(y_0,r)|G^-_{B(y_0,8r)}\left(\displaystyle \frac{1}{r}\right)\\
& \geq C |B(x_0,3r)|G^-_{B(y_0,8r)}\left(\displaystyle \frac{1}{r}\right)
\end{array}$$ 
By the condition $(A_{1,n})$ there exists a constant $C>0$ such that
$$G^+_{B(y_0,8r)}\left(\frac{1}{r}\right) \leq C G^-_{B(y_0,8r)}\left(\frac{1}{r}\right)$$
Using again Lemma $4.1$ we obtain
$$\begin{array}{ll}
\textsl{cap}_{G(\cdot)}(B(x_0,3r) \backslash \Omega ; B(x_0,6r)) & \geq C |B(x_0,3r)|G^+_{B(y_0,8r)}\left(\displaystyle \frac{1}{r}\right)\\
& \geq  C |B(x_0,3r)|G^+_{B(x_0,6r)}\left(\displaystyle \frac{1}{r}\right)\\
& \geq C \textsl{cap}_{G(\cdot)}(B(x_0,3r) ; B(x_0,6r))
\end{array}$$ 
for $r$ small enough, so by Theorem $4.1$ we have $\Omega$ is Sobolev $G(\cdot)$-regular.
\end{proof}

\begin{cor}
Let $G(\cdot) \in \Phi(\Omega)$  strictly convex and satisfy $(SC)$, $(A_0)$,  $(A_1)$ and $(A_{1,n})$. All balls are Sobolev $G(\cdot)$-regular. 
\end{cor}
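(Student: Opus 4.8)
The plan is to reduce the corollary to Theorem 4.3 by checking that every ball satisfies the exterior sphere condition at each of its boundary points. So I would fix a ball $B(z,\rho) \subset \R^n$ and an arbitrary point $x_0 \in \partial B(z,\rho)$, so that $|x_0 - z| = \rho$. The natural candidate for the externally tangent ball is the one of radius $\rho$ centered at the reflection of $z$ across $x_0$, namely $y_0 := 2x_0 - z$, which is also the point lying at distance $\rho$ from $x_0$ in the outward radial direction $(x_0 - z)/\rho$. One checks at once that $|y_0 - x_0| = |x_0 - z| = \rho$, so $x_0 \in \partial B(y_0,\rho)$, and that $|y_0 - z| = 2|x_0 - z| = 2\rho$.

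It then remains to verify $B(y_0,\rho) \cap \overline{B(z,\rho)} = \{x_0\}$. For any $w \in \overline{B(z,\rho)}$ the triangle inequality gives $|w - y_0| \geq |y_0 - z| - |w - z| \geq 2\rho - \rho = \rho$, so $w \notin B(y_0,\rho)$ unless both inequalities are equalities; but equality forces $|w - z| = \rho$ together with $w$ on the segment $[z,y_0]$, hence $w = x_0$. Thus the exterior sphere condition holds at $x_0$, and since $x_0 \in \partial B(z,\rho)$ was arbitrary, the ball $B(z,\rho)$ satisfies the exterior sphere condition. Applying Theorem 4.3 then immediately yields that $B(z,\rho)$ is Sobolev $G(\cdot)$-regular, which is the claim.

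I do not expect any real obstacle here: the argument is purely geometric, and the only step that needs a line of computation is the triangle-inequality verification above that an externally tangent ball of the same radius $\rho$ meets $\overline{B(z,\rho)}$ exactly at the tangency point. Everything analytic has already been absorbed into Theorem 4.3 (which in turn rests on Theorem 4.1 and Lemma 4.1).
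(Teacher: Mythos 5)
Your proposal is correct and is exactly the argument the paper intends: the corollary is stated as an immediate consequence of the exterior-sphere theorem (Theorem 4.2 in the paper's numbering, not 4.3), and your reflection construction $y_0 = 2x_0 - z$ with the triangle-inequality equality analysis is the standard verification that every ball satisfies the exterior sphere condition at each boundary point. The only cosmetic remark is that, as the paper's Definition 4.2 is literally stated with an open ball, the tangency identity should be read for the closed ball $\overline{B(y_0,\rho)}$, which is precisely what your equality case establishes.
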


Consequently, every open set can be exhausted by Sobolev $G(\cdot)$-regular open sets as a consequence of this corollary.

\section{The Perron-Wiener-Brelot method}

\subsection{Upper and lower Perron $G(\cdot)$-solution}

Let $G(\cdot) \in \Phi(\Omega)$. A function $u : \Omega \rightarrow \R \cup \{\infty\}$ is called $G(\cdot)$-superharmonic in $\Omega$ if
\begin{enumerate}
\item[i)] $u$ is lower semicontinuous,
\item[ii)] $u \not\equiv \infty$ in $\Omega$,
\item[iii)] for each domain $D \subset \subset \Omega$ the comparison principle holds: if $h \in C(\overline{D})$ is $G(\cdot)$-harmonic in $D$ and $u \geq h$ on $\partial D$ then $u \geq h$ in $D$.
\end{enumerate}
A function $v : \Omega \rightarrow \R \cup \{-\infty\}$ is called $G(\cdot)$-subharmonic in $\Omega$ if 
\begin{enumerate}
\item[i)] $u$ is upper semicontinuous,
\item[ii)] $u \not\equiv -\infty$ in $\Omega$,
\item[iii)] for each domain $D \subset \subset \Omega$ the comparison principle holds: if $h \in C(\overline{D})$ is $G(\cdot)$-harmonic in $D$ and $u \leq h$ on $\partial D$ then $u \leq h$ in $D$.
\end{enumerate}

For $f : \partial \Omega \rightarrow [-\infty,\infty]$ be a function, we define as in classical potential theory $\cite{ref26}$ two classes of functions:
\begin{enumerate}
\item[•] The upper class $U_f$ consists of all functions $v: \Omega \to \left(-\infty, \infty\right]$ such that
\begin{enumerate}
\item[i)] $v$ is $G(\cdot)$-superharmonic in $\Omega$,
\item[ii)] $v$ is bounded below, 
\item[iii)] $\liminf_{x\to \xi}v(x) \geq f(\xi)$ when $\xi \in \partial \Omega$.
\end{enumerate}

\item[•] The lower class $L_f$ consists of all functions $u: \Omega \to \left[-\infty, \infty\right)$ such that
\begin{enumerate}
\item[i)] $u$ is $G(\cdot)$-subharmonic in $\Omega$,
\item[ii)] $u$ is bounded above,
\item[iii)] $\limsup_{x\to \xi}u(x) \leq f(\xi)$ when $\xi \in \partial \Omega$.
\end{enumerate}
\end{enumerate}

We define at each point in $\Omega$
$$\text{The upper Perron} \; G(\cdot)\text{-solution} \; \;  \overline{H}_f(x) = \inf_{v \in U_f}v(x)$$
$$\text{The lower Perron } \; G(\cdot)\text{-solution} \; \;  \underline{H}_f(x) = \sup_{v \in L_f}v(x)$$
If $U_f = \emptyset$ (or $L_f=\emptyset$), then we have $\overline{H}_f = \infty$ (and $\underline{H}_f=-\infty$ respectively).\\

The following lemma gives simple properties for Perron $G(\cdot)$-solutions.

\begin{lm}
Let $f : \partial \Omega \rightarrow [-\infty,\infty]$ be a function, we have the following properties
\begin{enumerate}
\item[1)] $\underline{H}_f = -\overline{H}_{-f}$
\item[2)] $\underline{H}_f \leq \overline{H}_{f}$
\item[3)] if $f \leq g$, then $\overline{H}_{f} \leq \overline{H}_{g}$ 
\item[4)] for $\lambda \in R$, we have $\overline{H}_{f + \lambda} = \overline{H}_{f} + \lambda$ and $\overline{H}_{\lambda f} = \lambda \overline{H}_{f}$
\end{enumerate}
For $3)$ and $4)$, a similar statement is true if $\overline{H}_f$ is replace by $\underline{H}_f$.
\end{lm}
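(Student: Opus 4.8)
The plan is to deduce all four properties directly from the definitions of the classes $U_f$, $L_f$ together with a handful of elementary facts about $G(\cdot)$-(sub/super)harmonic functions: the $G(\cdot)$-Laplacian is odd in $\nabla u$, so $-h$ is $G(\cdot)$-harmonic whenever $h$ is, and hence $-w$ is $G(\cdot)$-superharmonic whenever $w$ is $G(\cdot)$-subharmonic; constants and their translates $h\mapsto h-\lambda$ are $G(\cdot)$-harmonic (the weak formulation being trivially satisfied where the gradient vanishes); a sum of two $G(\cdot)$-superharmonic functions is $G(\cdot)$-superharmonic; and a $G(\cdot)$-superharmonic function that is bounded below and has nonnegative boundary $\liminf$ everywhere on $\partial\Omega$ is nonnegative (minimum principle). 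For property 1) the key observation is that the involution $w\mapsto -w$ interchanges $G(\cdot)$-superharmonicity with $G(\cdot)$-subharmonicity, interchanges ``bounded below'' with ``bounded above'', and turns the requirement $\liminf_{x\to\xi}w(x)\ge -f(\xi)$ into $\limsup_{x\to\xi}(-w)(x)\le f(\xi)$; thus it is a bijection of $U_{-f}$ onto $L_f$ (both empty simultaneously, in which case the two sides equal the appropriate $\pm\infty$). Taking suprema and infima, $\underline H_f=\sup_{w\in L_f}w=\sup_{w\in U_{-f}}(-w)=-\inf_{w\in U_{-f}}w=-\overline H_{-f}$.

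For 2), if $U_f$ or $L_f$ is empty the inequality is immediate from the conventions; otherwise fix $u\in L_f$ and $v\in U_f$. Then $v-u$ is well defined with values in $(-\infty,\infty]$ ($v$ bounded below, $u$ bounded above), it is $G(\cdot)$-superharmonic as a sum of $G(\cdot)$-superharmonic functions, and at each $\xi\in\partial\Omega$ one has $\liminf_{x\to\xi}(v(x)-u(x))\ge \liminf_{x\to\xi}v(x)-\limsup_{x\to\xi}u(x)\ge f(\xi)-f(\xi)=0$ (checking separately the cases $f(\xi)$ finite, $=+\infty$, $=-\infty$, using the one-sided boundedness of $v$ and $u$). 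The minimum principle then gives $v-u\ge 0$, i.e. $u\le v$ on $\Omega$, and taking $\sup_{u\in L_f}$ followed by $\inf_{v\in U_f}$ yields $\underline H_f\le\overline H_f$.

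Property 3) is purely set-theoretic: if $f\le g$, then any $v$ with $\liminf_{x\to\xi}v\ge g(\xi)$ also satisfies $\liminf_{x\to\xi}v\ge f(\xi)$, so $U_g\subseteq U_f$ and $\overline H_f=\inf_{v\in U_f}v\le\inf_{v\in U_g}v=\overline H_g$; the analogous inclusion $L_f\subseteq L_g$ handles $\underline H$. For 4), the translation identity is immediate: $v\mapsto v+\lambda$ is a bijection of $U_f$ onto $U_{f+\lambda}$ — it preserves $G(\cdot)$-superharmonicity (because $h-\lambda$ is $G(\cdot)$-harmonic when $h$ is) and boundedness below, and shifts the boundary $\liminf$ by $\lambda$ — so $\overline H_{f+\lambda}=\inf_{v\in U_f}(v+\lambda)=\overline H_f+\lambda$, and likewise for $\underline H$. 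For the homogeneity $\overline H_{\lambda f}=\lambda\overline H_f$ I would first settle $\lambda=0$ (the constants $\pm\varepsilon$ lie in $U_0$ and $L_0$ respectively, so with 2) one gets $\overline H_0=\underline H_0=0$), then for $\lambda>0$ argue that $v\mapsto\lambda v$ maps $U_f$ bijectively onto $U_{\lambda f}$, and finally reduce $\lambda<0$ to the case $\lambda>0$ via 1).

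I expect this last point to be the main obstacle. Translation is harmless because it maps $G(\cdot)$-harmonic functions to $G(\cdot)$-harmonic functions, but the $G(\cdot)$-Laplacian is not homogeneous for a general $\Phi$-function, so it is not automatic that $\lambda v$ is $G(\cdot)$-superharmonic when $v$ is; verifying property (iii) for $\lambda v$ — comparison against $G(\cdot)$-harmonic functions after rescaling — is where the solvability and uniqueness of the Dirichlet problem on regular subdomains (Section~3, and the fact that balls are $G(\cdot)$-regular) must be used, and is the step that would need the most care. The minimum principle used in 2), and the stability of $G(\cdot)$-superharmonicity under sums, should also be recorded beforehand if they have not been established already.
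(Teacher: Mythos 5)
Your handling of 1), 3) and the translation half of 4) is fine: the operator is odd in $\nabla u$, so $w\mapsto -w$ swaps the classes $U_{-f}$ and $L_f$; $f\le g$ gives the inclusion $U_g\subseteq U_f$; and adding a constant preserves $G(\cdot)$-harmonicity and hence the upper class. (The paper states this lemma without any proof, so there is nothing to compare against; the route below is the standard one.) The first genuine gap is in 2): you declare $v-u$ to be $G(\cdot)$-superharmonic ``as a sum of $G(\cdot)$-superharmonic functions''. That closure property is a feature of the \emph{linear} theory; for nonlinear operators of $p$-Laplacian or generalized Orlicz type, superharmonicity (defined via comparison with $G(\cdot)$-harmonic functions) is not preserved under addition, so this step fails. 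The correct and simpler argument is to apply the comparison principle (Lemma 5.2 of the paper) directly to a pair $u\in L_f$, $v\in U_f$: for every $y\in\partial\Omega$ one has $\limsup_{x\to y}u(x)\le f(y)\le\liminf_{x\to y}v(x)$, and since $u$ is bounded above and $v$ bounded below the excluded case (both sides simultaneously $\pm\infty$) cannot occur; hence $u\le v$ in $\Omega$, and taking $\sup$ over $L_f$ and $\inf$ over $U_f$ gives $\underline H_f\le\overline H_f$. Your minimum principle is itself a special case of this comparison, so nothing is gained by passing through $v-u$.

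The second gap is the one you flag yourself but do not close: the homogeneity $\overline H_{\lambda f}=\lambda\overline H_f$. Your plan for $\lambda>0$ rests on $v\mapsto\lambda v$ mapping $U_f$ onto $U_{\lambda f}$, and that is exactly what is unavailable here: since $g(x,\lambda t)$ is not a constant multiple of $g(x,t)$ for a general generalized $\Phi$-function, $\lambda h$ need not be $G(\cdot)$-harmonic and $\lambda v$ need not be $G(\cdot)$-superharmonic, so the bijection argument (which works for $t^p$ precisely because of homogeneity) cannot be repaired by invoking solvability of the Dirichlet problem on regular subdomains. Moreover, your reduction of $\lambda<0$ to $\lambda>0$ via 1) only yields $\overline H_{\lambda f}=-\underline H_{-\lambda f}$, which equals $\lambda\overline H_f$ only if $-\lambda f$ is already known to be resolutive, which is not available at this stage for arbitrary $f$. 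So as written your argument establishes the scaling identity only for $\lambda=0$; for general $\lambda$ it is not provable along these lines in the generalized Orlicz setting (and the paper offers no proof of it either), whereas the parts of the lemma actually used later in the paper are the order properties and the invariance under adding constants, which your argument, corrected as above, does cover.
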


\subsection{The Poisson modification}

Generally, to construct the Poisson modification, the Harnack Convergence theorem and the comparison principle are needed (see $\cite{ref22}$).

\begin{Th}[Harnack Convergence theorem]
Let $G(\cdot) \in \Phi(\Omega)$ satisfies $(SC)$. Suppose that $u_i$ is a $G(\cdot)$-harmonic such that 
$$ 0 \leq u_1 \leq u_2 \leq ... , \; u = \lim u_i, \; \text{pointwise in} \; \Omega.$$
Then, either $u = \infty$ or $u$ is a $G(\cdot)$-harmonic in $\Omega$.
\end{Th}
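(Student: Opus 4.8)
The plan rests on three ingredients: the Harnack inequality for nonnegative $G(\cdot)$-harmonic functions, the interior Hölder estimate for locally bounded $G(\cdot)$-solutions already invoked in the proof of Theorem~3.1 (Corollary~4.1 in \cite{ref4}), and a monotonicity argument to pass to the limit in the weak formulation of $(3.1)$. We may assume $\Omega$ connected (otherwise argue componentwise). First I would establish a Harnack dichotomy: for $B(x_0,2r)\subset\subset\Omega$, applying Harnack to each nonnegative $G(\cdot)$-harmonic $u_i$ gives $\sup_{B(x_0,r)}u_i\le C\inf_{B(x_0,r)}u_i$ with $C$ independent of $i$; letting $i\to\infty$ and using $u_i\uparrow u$ yields $\sup_{B(x_0,r)}u\le C\inf_{B(x_0,r)}u$. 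Hence $\{u<\infty\}$ and $\{u=\infty\}$ are both open, so by connectedness one of them equals $\Omega$. If $u\equiv\infty$ we are done; otherwise $u$ is finite, hence — by the same limiting inequality — locally bounded, and then so is the whole sequence, with $\norme{u_i}_{L^\infty(B')}$ bounded uniformly in $i$ on every $B'\subset\subset\Omega$.

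Next, since the $u_i$ are $G(\cdot)$-harmonic and locally uniformly bounded, Corollary~4.1 in \cite{ref4} gives, on each $B'\subset\subset\Omega$, a common Hölder exponent and constant for the family $\{u_i\}$. By Arzelà–Ascoli every subsequence has a locally uniformly convergent further subsequence, whose limit must be the pointwise limit $u$; therefore $u_i\to u$ locally uniformly in $\Omega$ and $u\in C(\Omega)$.

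Finally I would show $u$ solves $(3.1)$. Fix $B$ with $2B\subset\subset\Omega$ and $\eta\in C_c^\infty(2B)$ with $0\le\eta\le1$, $\eta\equiv1$ on $B$. Testing the equation for $u_i$ with $\eta^{g^0}(u_i-u)$ and using $(SC)$, the Young inequality, $(2.5)$, and the uniform $L^\infty$ bound yields a Caccioppoli-type estimate $\int_{2B}G(x,|\nabla u_i|)\,\mathrm dx\le C$ uniform in $i$; thus $\{\nabla u_i\}$ is bounded in $L^{G(\cdot)}(2B)$ and, along a subsequence, $\nabla u_i\rightharpoonup\nabla u$ weakly. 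Using the same test function together with $u_i\to u$ uniformly and this weak convergence, one gets $\int_{2B}\eta^{g^0}\bigl(\tfrac{g(x,|\nabla u_i|)}{|\nabla u_i|}\nabla u_i-\tfrac{g(x,|\nabla u|)}{|\nabla u|}\nabla u\bigr)\cdot(\nabla u_i-\nabla u)\,\mathrm dx\to0$, and strict monotonicity of $\xi\mapsto\tfrac{g(x,|\xi|)}{|\xi|}\xi$ forces $\nabla u_i\to\nabla u$ in measure on $B$, hence a.e.\ along a further subsequence. Then $\tfrac{g(x,|\nabla u_i|)}{|\nabla u_i|}\nabla u_i\cdot\nabla\varphi$ converges a.e.\ and is equi-integrable (from the uniform bound on $\int G(x,|\nabla u_i|)$ and $(2.5)$), so passing to the limit in $\int_{2B}\tfrac{g(x,|\nabla u_i|)}{|\nabla u_i|}\nabla u_i\cdot\nabla\varphi\,\mathrm dx=0$ for $\varphi\in C_c^\infty(B)$, and then by density for $\varphi\in W_0^{1,G(\cdot)}(B)$, gives the same identity for $u$. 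Since $B\subset\subset\Omega$ was arbitrary and $u$ is continuous, $u$ is $G(\cdot)$-harmonic.

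The steps on the dichotomy and on uniform convergence are routine once Harnack and the interior Hölder estimate are in hand. The genuine obstacle is the last step: a monotone locally uniform limit of $G(\cdot)$-solutions need not be handled by subtracting solutions (the equation is nonlinear), so one must go through the Caccioppoli bound and the Minty-type monotonicity trick above — or, alternatively, through continuous dependence of the Dirichlet solution on its boundary data in balls, which is legitimate here since balls are Sobolev $G(\cdot)$-regular by Corollary~4.1 and the comparison principle is available; this is precisely why solvability in balls and the comparison principle were flagged as prerequisites.
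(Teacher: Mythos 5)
The paper itself offers no proof of this theorem: it is quoted as an imported tool with a pointer to \cite{ref22}, so there is no internal argument to compare yours against. Your reconstruction follows the classical Heinonen--Kilpel\"ainen--Martio route (Harnack dichotomy, uniform interior H\"older estimates plus Arzel\`a--Ascoli, a uniform Caccioppoli bound, and the Minty monotonicity trick to pass to the limit in the weak formulation). The architecture of the last two steps is sound and is indeed how one must handle the nonlinearity.

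The genuine gap is in the first step, and it is specific to generalized Orlicz growth. The Harnack inequality of \cite{ref4} has the form $\sup_{B_r} u \le C\,(\inf_{B_r} u + r)$, and, unlike in the $p$-Laplacian case, the constant $C$ is not purely structural: it depends on a norm of the solution, because $(A_1)$ only compares $G(x,t)$ and $G(y,t)$ for $t$ in a window determined by the size of $u$. For a single solution this is harmless, but for the increasing sequence $u_i\uparrow u$ the uniformity of $C$ in $i$ is precisely what must be proved, and you assert ``with $C$ independent of $i$'' without argument; the same caveat applies to the H\"older constants of Corollary~4.1 in \cite{ref4}, although that step is fine once a uniform local $L^\infty$ bound is in hand. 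Concretely, you need either a uniform bound on the relevant norm $\|u_i\|$ near a point where $u$ is finite (monotone convergence reduces this to local integrability of $u$, which itself has to be extracted and is not free), or a version of Harnack's inequality uniform over solutions dominated by a fixed finite function. Without this, the openness of $\{u<\infty\}$ and $\{u=\infty\}$ is not established. Two smaller points: the Harnack inequality should be quoted with its additive term $+r$ (this does not affect the dichotomy), and the Caccioppoli estimate should be obtained by testing with $\eta^{g^0}u_i$ and the uniform $L^\infty$ bound rather than with $\eta^{g^0}(u_i-u)$, since at that stage $\nabla u$ is not yet known to exist.
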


\begin{lm}[Comparison principle]
Let $ G(\cdot) \in \Phi(\Omega)$ satisfies $(SC)$. Suppose that $u$ is a $G(\cdot)$-subharmonic and $v$ is a $G(\cdot)$-superharmonic in $\Omega$ such that
$$\limsup_{x \to y}u(x) \leq \liminf_{x \to y}v(x)$$
for all $y \in \partial \Omega$. If the left and right-hand sides are neither $\infty$ nor $-\infty$ at the same time, then 
$$u \leq v \quad \text{in} \; \Omega.$$
\end{lm}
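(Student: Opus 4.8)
The strategy is the standard "subtract and test against the positive part" argument, adapted to the $G(\cdot)$-setting. First I would reduce to the case where $u,v \in W^{1,G(\cdot)}_{\mathrm{loc}}(\Omega)$ are genuine $G(\cdot)$-sub/supersolutions on a slightly smaller domain, using the fact (from the definitions in Section 5) that $G(\cdot)$-super/subharmonic functions are locally approximable by increasing/decreasing sequences of $G(\cdot)$-super/subsolutions via Poisson modification, together with the lower/upper semicontinuity. The boundary inequality $\limsup_{x\to y}u(x)\le\liminf_{x\to y}v(x)$ together with the hypothesis that the two sides are not simultaneously $\pm\infty$ lets us assume, after adding a small constant $\varepsilon>0$ to $v$ and passing to a compactly contained subdomain $\Omega'\Subset\Omega$, that $u < v$ near $\partial\Omega'$; hence $w := (u-v)^+$ has compact support in $\Omega'$ and belongs to $W^{1,G(\cdot)}_0(\Omega')$.

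Next I would use $w$ as a test function. Since $u$ is a $G(\cdot)$-subsolution and $w\ge 0$,
$$\int_{\{u>v\}} \frac{g(x,|\nabla u|)}{|\nabla u|}\nabla u\cdot(\nabla u-\nabla v)\,\mathrm{d}x \le 0,$$
and since $v$ is a $G(\cdot)$-supersolution,
$$\int_{\{u>v\}} \frac{g(x,|\nabla v|)}{|\nabla v|}\nabla v\cdot(\nabla u-\nabla v)\,\mathrm{d}x \ge 0.$$
Subtracting gives
$$\int_{\{u>v\}}\left(\frac{g(x,|\nabla u|)}{|\nabla u|}\nabla u-\frac{g(x,|\nabla v|)}{|\nabla v|}\nabla v\right)\cdot(\nabla u-\nabla v)\,\mathrm{d}x \le 0.$$
The vector field $\xi\mapsto \frac{g(x,|\xi|)}{|\xi|}\xi$ is strictly monotone because $G(x,\cdot)$ is strictly convex (this is exactly the structural input used for uniqueness in Theorem 3.1, via Lemma 4.3 of [ref5]); hence the integrand is nonnegative and vanishes only where $\nabla u=\nabla v$. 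Therefore $\nabla u=\nabla v$ a.e.\ on $\{u>v\}$, which forces $\nabla w \equiv 0$, and since $w\in W^{1,G(\cdot)}_0(\Omega')$ the Poincaré inequality (Theorem 2.1) gives $w\equiv 0$, i.e.\ $u\le v+\varepsilon$ on $\Omega'$. Letting $\varepsilon\to 0$ and then exhausting $\Omega$ by such $\Omega'$ yields $u\le v$ in $\Omega$.

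The main obstacle is the first reduction: $G(\cdot)$-sub/superharmonic functions in the potential-theoretic sense need not a priori lie in $W^{1,G(\cdot)}_{\mathrm{loc}}$, so one cannot test directly. The clean way around this is to invoke the Poisson-modification machinery of Section 5.2 — replacing $v$ on small balls by the (continuous, Sobolev) solution of the Dirichlet problem, which is legitimate by Corollary 4.1 since balls are Sobolev $G(\cdot)$-regular — to obtain a decreasing sequence of $G(\cdot)$-superharmonic functions that are $G(\cdot)$-supersolutions and converge to $v$; symmetrically for $u$. One then runs the monotonicity argument on these and passes to the limit using the Harnack Convergence theorem (Theorem 5.1) to control the limit. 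A secondary technical point is handling the set $\{|\nabla u|=0\}$ or $\{|\nabla v|=0\}$ in the monotonicity inequality, which is dispatched by the usual convention that $\frac{g(x,t)}{t}t=g(x,t)\to 0$ as $t\to 0$ together with $(SC)$, so the vector field extends continuously by $0$ at the origin and monotonicity persists.
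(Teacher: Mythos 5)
Your core testing argument (test with $w=(u-v)^+$, use monotonicity, conclude via Poincar\'e) is the right tool for comparing \emph{Sobolev} sub- and supersolutions, but it does not reach the statement, because here $G(\cdot)$-sub/superharmonic functions are defined purely through the comparison property with harmonic functions and need not belong to $W^{1,G(\cdot)}_{\mathrm{loc}}(\Omega)$. You acknowledge this as the main obstacle, but the proposed repair via the Poisson-modification machinery of Section 5.2 fails on two counts. First, it is circular inside this paper: the construction of $P(v,D)$ and Theorem 5.2 explicitly invoke the comparison principle you are trying to prove. Second, and more fundamentally, the approximation goes in the wrong direction: the Poisson modification of a superharmonic $v$ is harmonic in $D$, equals $v$ outside $D$, lies \emph{below} $v$, is not a global supersolution, and iterated modifications do not converge back to $v$; symmetrically, natural Sobolev approximants of a subharmonic $u$ lie \emph{above} $u$. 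Hence the boundary control $u<v+\varepsilon$ near $\partial\Omega'$ does not transfer to the approximating pair, and comparing the approximants gives no information about $u$ and $v$. The fact that (truncated) superharmonic functions are supersolutions is itself a nontrivial theorem not established at this point of the paper. A further, smaller issue: your strict-monotonicity step needs strict convexity of $G(x,\cdot)$, which the lemma as stated does not assume.

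For comparison, the paper offers no proof and refers to \cite{ref22}; the standard argument avoids any Sobolev regularity of $u$ and $v$. For $\varepsilon>0$, the set $\{u\ge v+\varepsilon\}$ is closed in $\Omega$ (upper semicontinuity of $u-v$) and, by the boundary hypothesis, compactly contained in $\Omega$; choose a Sobolev $G(\cdot)$-regular domain $D\subset\subset\Omega$ containing it (Theorem 4.2, Corollary 4.1 and the exhaustion remark). On $\partial D$ one has $u<v+\varepsilon$ with $u$ upper and $v$ lower semicontinuous, so an interposition argument produces $\varphi\in C(\partial D)$ with $u\le\varphi\le v+\varepsilon$ on $\partial D$. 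Let $h\in C(\overline D)$ be $G(\cdot)$-harmonic in $D$ with boundary values $\varphi$ (Theorem 3.1 together with the regularity of $D$, approximating $\varphi$ uniformly by smooth data and passing to the limit with the weak comparison principle of \cite{ref5}). Applying the definition of subharmonicity of $u$ to $h$, and of superharmonicity of $v$ to $h-\varepsilon$ (constants may be added since the operator depends only on $\nabla u$), yields $u\le h\le v+\varepsilon$ in $D$, hence in all of $\Omega$, and letting $\varepsilon\to0$ finishes. The missing idea in your proposal is precisely this insertion of a harmonic function between $u$ and $v$ on a regular subdomain, rather than trying to turn $u$ and $v$ themselves into admissible test objects.
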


Let $G(\cdot) \in \Phi(\Omega)$  strictly convex and satisfy $(SC)$, $(A_0)$,  $(A_1)$ and $(A_{1,n})$. Given a Sobolev ${G(\cdot)}$-regular subdomain $D \subset \Omega$ (see Corollary $4.1$) and $v$ is $G(\cdot)$-superharmonic fonction in $\Omega$.
Since $v$ is lower semicontinuous in $\Omega$, there exists a sequence $v_i \in C^\infty(\Omega)$ such that 
$$v_1 \leq v_2 \leq ... \leq v \; \text{and} \; \lim_{i\rightarrow\infty} v_i(x) = v(x)\; \text{at each} \;x \in \Omega.$$ 
Let $h_i$ be the $G(\cdot)$-harmonic function in $D$ such that $h_i - v_i \in W_0^{1,G(\cdot)}(D)$. Applying the Sobolev ${G(\cdot)}$-regularity of $D$ and the comparison principle, we get 
$$h_1 \leq h_2 \leq ... \leq v \; \text{in} \; D.$$
By the Harnack convergence theorem, the function $h = \lim_{i \rightarrow \infty}h_i$ is $G(\cdot)$-harmonic. 
We define the Poisson modification $P(v,D)$ as follows
$$P(v,D)= \left \{
   \begin{array}{r c l}
    h & \; \text{in} & D\\
    v & \; \text{in} & \Omega \backslash D.
   \end{array}
   \right.$$

\begin{rma}
If $v \in W^{1,G(\cdot)}(\Omega)$, then the Poisson modification of $v$ is defined as follows 
$$P(v,D)= \left \{
   \begin{array}{r c l}
    h & \; \text{in} & D\\
    v & \; \text{in} & \Omega \backslash D    
   \end{array}
   \right.$$
where $h$ is the $G(\cdot)$-harmonic function in $D$ such that $h-v \in W_0^{1,G(\cdot)}(D)$.
\end{rma}   

\begin{Th}
Let $G(\cdot) \in \Phi(\Omega)$  strictly convex and satisfy $(SC)$, $(A_0)$,  $(A_1)$ and $(A_{1,n})$. Let $D \subset \Omega$ be a $G(\cdot)$-regular subdomain and $v$ is a $G(\cdot)$-superharmonic function in $\Omega$. Then the Poisson modification $P(v,D)$ is $G(\cdot)$-superharmonic function in $\Omega$, $G(\cdot)$-harmonic function in $D$ and $P(v,D) \leq v.$.
\end{Th}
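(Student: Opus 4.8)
The plan is to observe that two of the three assertions are already contained in the construction of $P(v,D)$ given just before Remark~5.1: there $h=\lim_i h_i$ was shown to be $G(\cdot)$-harmonic in $D$ by the Harnack convergence theorem, and the sequence $h_i$ was shown, using the Sobolev $G(\cdot)$-regularity of $D$ and the comparison principle, to satisfy $h_i\le v$ in $D$, so that $P(v,D)=h=\sup_i h_i\le v$ in $D$; since $P(v,D)=v$ on $\Omega\setminus D$, this gives $P(v,D)\le v$ throughout $\Omega$. Hence it remains only to prove that $P(v,D)$ is $G(\cdot)$-superharmonic in $\Omega$, i.e. to check the three defining properties.

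I would argue as follows. \emph{Lower semicontinuity:} $P(v,D)$ is continuous in $D$ (being $G(\cdot)$-harmonic there) and lower semicontinuous in $\Omega\setminus\overline D$ (being equal to $v$); at a point $\xi\in\partial D\cap\Omega$ one has $P(v,D)(\xi)=v(\xi)$, approach through $\Omega\setminus D$ is handled by lower semicontinuity of $v$, and approach through $D$ uses the Sobolev $G(\cdot)$-regularity of $D$, which gives $\lim_{x\to\xi}h_i(x)=v_i(\xi)$ for each $i$ and hence $\liminf_{x\to\xi,\,x\in D}h(x)\ge\sup_i v_i(\xi)=v(\xi)$. \emph{Non-degeneracy:} $P(v,D)\le v\not\equiv\infty$, so $P(v,D)\not\equiv\infty$. \emph{Comparison:} let $U\subset\subset\Omega$ be a domain and $w\in C(\overline U)$ be $G(\cdot)$-harmonic in $U$ with $w\le P(v,D)$ on $\partial U$. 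Since $P(v,D)\le v$, we have $w\le v$ on $\partial U$, so the comparison property of the $G(\cdot)$-superharmonic function $v$ yields $w\le v$ in $U$, in particular $w\le v=P(v,D)$ on $U\setminus D$. For $U\cap D$, fix a connected component $V$; then $\partial V\subseteq(\partial U\cap\overline D)\cup(\overline U\cap\partial D)$. If $\zeta\in\partial U\cap D$, then $P(v,D)(\zeta)=h(\zeta)$ with $h$ continuous at $\zeta$, so $\limsup_{x\to\zeta}w(x)=w(\zeta)\le h(\zeta)=\lim_{x\to\zeta}h(x)$; if $\zeta\in\overline U\cap\partial D$, then $w(\zeta)\le v(\zeta)$ --- because $w\le v$ in $U$ when $\zeta\in U$, and $w\le P(v,D)=v$ on $\partial U$ when $\zeta\in\partial U$ --- and since $\liminf_{x\to\zeta,\,x\in D}h(x)\ge v(\zeta)$ we again get $\limsup_{x\to\zeta}w(x)\le\liminf_{x\to\zeta,\,x\in V}h(x)$. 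The comparison principle (Lemma~5.2), applied on $V$ to the $G(\cdot)$-subharmonic $w$ and the $G(\cdot)$-superharmonic $h$, then gives $w\le h$ in $V$; as $V$ was arbitrary, $w\le h=P(v,D)$ on $U\cap D$, whence $w\le P(v,D)$ in $U$. This verifies the comparison property, so $P(v,D)$ is $G(\cdot)$-superharmonic in $\Omega$.

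I expect the main obstacle to be the comparison property, and within it the matching of the two pieces along $\partial D$: it hinges on the pointwise boundary relation $\liminf_{x\to\xi,\,x\in D}h(x)\ge v(\xi)$ for $\xi\in\partial D$ --- which is precisely where the Sobolev $G(\cdot)$-regularity of $D$ enters --- and on treating the components of $U\cap D$ separately through the nonlinear comparison principle, since $h-w$ is not $G(\cdot)$-harmonic and the classical minimum-principle shortcut is unavailable.
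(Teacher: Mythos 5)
Your proposal is correct and follows essentially the same route as the paper: harmonicity in $D$ and $P(v,D)\le v$ come from the construction, lower semicontinuity on $\partial D$ from the Sobolev $G(\cdot)$-regularity of $D$ via $h\ge h_i$ and $\lim_{x\to\xi}h_i(x)=v_i(\xi)$, and the comparison property by first comparing with $v$ on $U\setminus D$ and then applying the comparison principle to $w$ and $h$ on $U\cap D$. Your case split on $\partial(U\cap D)$ (points of $\partial U\cap D$ handled by continuity of $h$, points of $\partial D$ by the liminf estimate) is in fact slightly more careful than the paper's one-line boundary estimate, but it is the same argument.
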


\begin{proof}
By the construction of the Poisson modification, we have $P(v,D)$ is a $G(\cdot)$-harmonic function in $D$, and $h \leq v$ in $D$, so
$$P(v,D) \leq v \; \text{in} \; \Omega.$$ 
We show that $P(v,D)$ is lower semicontinuous. Let $\xi \in \partial D$
$$ \liminf_{\substack{ x\to\xi \\ x\in\Omega \backslash D}}P(v,D) = \liminf_{\substack{ x\to\xi \\ x\in\Omega \backslash D}} v(x) \geq v(\xi) = P(v,D)(\xi)$$
and
$$\liminf_{\substack{ x\to\xi \\ x\in D}} P(v,D)(x) = \liminf_{\substack{ x\to\xi \\ x\in D}} h(x) \geq \liminf_{\substack{ x\to\xi \\ x\in D}} h_i(x) = v_i(\xi).$$
So,
$$\liminf_{\substack{ x\to\xi \\ x\in D}} P(v,D)(x)\geq v(\xi) = P(v,D)(\xi).$$
Next, we prove $P(v,D)$ satisfies the comparison principle. Indeed, let $G \subset \subset \Omega$ is a domain and $H \in C(\overline{G})$ is $G(\cdot)$-harmonic function in $G$ with $\restr{H}{\partial G} \leq \restr{P(v,D)}{\partial G}$.\\
We have $P(v,D) \leq v$ in $\Omega$, then $\restr{H}{\partial G} \leq \restr{v}{\partial G}$. As $v$ is $G(\cdot)$-superharmonic function, then $H \leq v$ in $G$. Hence,
$$H \leq P(v,D) \; \text{in} \; G \backslash D.$$
Let $\xi \in \partial (G \cap D)$, we have
$$ H(\xi) \leq v(\xi) \leq \liminf_{\substack{ x\to\xi \\ x\in D \cap G}} h(x).$$
So
$$\liminf_{\substack{ x\to\xi \\ x\in D \cap G}}H(x) \leq \liminf_{\substack{ x\to\xi \\ x\in D \cap G}}h(x).$$
Then
$$H \leq h  = P(v,D) \;\text{in} \; D \cap G.$$
Hence
$$H \leq P(v,D) \; \text{in} \; G.$$\\
Therefore  $P(v,D)$ is $G(\cdot)$-superharmonic function in $\Omega$.
\end{proof}

\subsection{$G(\cdot)$-resolutivity}

\begin{df}
Let $G(\cdot) \in \Phi(\Omega)$  strictly convex and satisfy $(SC)$, $(A_0)$,  $(A_1)$ and $(A_{1,n})$. We say that a function $f: \Omega \to [-\infty, \infty]$ is $G(\cdot)$-resolutive if the upper and the lower Perron $G(\cdot)$-solution $\overline{H}_f$ and $\underline{H}_f$ coincide and are $G(\cdot)$-harmonic in $\Omega$.
\end{df}

\begin{df}
A family $U$ of functions is down ward directed if for each $u,v \in U$, there is $s \in U$ with $s\leq \min(u,v)$
\end{df}

The following Lemma is fundamental in PWB method $\cite{ref14}$. The first recall that the lower semicontinuous regularization $u^*$ of any function $u: \Omega \to [-\infty, \infty]$ is defined by
$$u^*(x) := \lim_{r \to 0} \inf_{\Omega \cap B(x,r)}u.$$

\begin{lm}[Choquet's topological lemma]
Suppose $E \subset \R^N$ and that $U =\{ u_\gamma , \gamma \in I\}$ is a family of functions $u_\gamma: E \rightarrow [-\infty, \infty]$. Let $u = \inf U$. If $U$ is down ward directed, then there is a decreasing  sequence of functions $v_j \in U$ with limit $v$ such that the lower semicontinuous  regularizations $u^*$ and $v^*$ coincide.
\end{lm}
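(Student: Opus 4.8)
The plan is to exploit the second countability of $\R^N$ to replace the (possibly uncountable) family $U$ by a countable subfamily, and then to collapse that subfamily into a decreasing sequence using the downward-directedness hypothesis. First I would fix a countable base $\{B_i\}_{i\in\N}$ for the subspace topology on $E$ (for instance, the traces on $E$ of Euclidean balls with rational centres and radii). The key preliminary remark is that the lower semicontinuous regularization may be computed base-wise: since $u^*(x)=\sup_{r>0}\inf_{E\cap B(x,r)}u$ and the base sets containing $x$ are cofinal among all neighbourhoods of $x$, one has $u^*(x)=\sup\{\inf_{B_i}u : x\in B_i\}$, and the same identity holds with any other function in place of $u$. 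Hence it suffices to produce a decreasing sequence $v_j\in U$ whose pointwise limit $v$ satisfies $\inf_{B_i}v=\inf_{B_i}u$ for every $i$.

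For each $i$, since $u=\inf U$ we have $\inf_{B_i}u=\inf_{\gamma\in I}\inf_{B_i}u_\gamma$, so I can choose functions $u_{\gamma_{i,k}}\in U$, $k\in\N$, with $\inf_{B_i}u_{\gamma_{i,k}}\to\inf_{B_i}u$ as $k\to\infty$ (when $\inf_{B_i}u=-\infty$, a sequence driving the infimum to $-\infty$; the bookkeeping for $\pm\infty$ values is routine). The collection $\{u_{\gamma_{i,k}}:i,k\in\N\}$ is countable; enumerate it as $w_1,w_2,\dots$. Using downward-directedness I would then build the required sequence inductively: set $v_1:=w_1$ and, given $v_j\in U$, choose $v_{j+1}\in U$ with $v_{j+1}\le\min(v_j,w_{j+1})$. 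Then $(v_j)$ is decreasing, each $v_j\in U$ (so $v_j\ge u$), and $v_j\le w_m$ whenever $m\le j$. Put $v:=\lim_j v_j=\inf_j v_j$.

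Finally I would verify $\inf_{B_i}v=\inf_{B_i}u$ for each fixed $i$. The inequality $\ge$ is immediate from $v\ge u$. For $\le$, for every $k$ there is $m$ with $w_m=u_{\gamma_{i,k}}$, whence $v\le v_m\le w_m=u_{\gamma_{i,k}}$ on $B_i$; taking the infimum over $B_i$ and then over $k$ gives $\inf_{B_i}v\le\inf_k\inf_{B_i}u_{\gamma_{i,k}}=\inf_{B_i}u$. Thus $\inf_{B_i}v=\inf_{B_i}u$ for all $i$, and the base-wise formula for the lsc regularization yields $v^*(x)=u^*(x)$ for every $x\in E$, as desired. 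The only point requiring care is the inductive (diagonal) step that turns the countable subfamily into an honestly decreasing sequence lying inside $U$ — this is precisely where the downward-directedness hypothesis enters — together with the handling of infinite values; the rest is a soft topological argument.
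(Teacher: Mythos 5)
Your argument is correct: the base-wise formula for the lower semicontinuous regularization, the extraction of a countable subfamily realizing $\inf_{B_i}u$ on each basis element, and the inductive use of downward-directedness to collapse it into a decreasing sequence in $U$ all hold up, including the $-\infty$ bookkeeping. Note that the paper itself offers no proof of this lemma --- it is quoted from Heinonen--Kilpel\"ainen--Martio \cite{ref14} --- and your proof is essentially the standard argument given there, so there is nothing to reconcile.
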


\begin{Th}
Let $G(\cdot) \in \Phi(\Omega)$  strictly convex and satisfy $(SC)$, $(A_0)$,  $(A_1)$ and $(A_{1,n})$. Then one of the following alternatives is true
\begin{enumerate}
\item[i)] $\overline{H}_f$ is $G(\cdot)$-harmonic in $\Omega$,
\item[ii)] $\overline{H}_f \equiv -\infty$,
\item[iii)] $\overline{H}_f \equiv \infty$.
\end{enumerate}
A similar statement is true for $\underline{H}_f$.
\end{Th}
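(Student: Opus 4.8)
The plan is to fix a point $x_0 \in \Omega$ and first dispose of the degenerate cases, then run a Choquet/Harnack argument for the interesting case. The natural trichotomy parameter is the value $\overline{H}_f(x_0) = \inf_{v\in U_f} v(x_0)$. If $U_f = \emptyset$, then by the convention stated just before Lemma 5.1 we have $\overline H_f \equiv \infty$, which is alternative (iii). If $U_f \neq \emptyset$ but $\overline H_f(x_0) = -\infty$ for some $x_0$, I would show $\overline H_f \equiv -\infty$: pick $v_j \in U_f$ with $v_j(x_0) \to -\infty$; since $U_f$ is closed under taking minima (the minimum of two $G(\cdot)$-superharmonic functions is $G(\cdot)$-superharmonic, and the other defining properties of $U_f$ are clearly preserved), $U_f$ is downward directed, so Choquet's topological lemma (Lemma 5.3) applied on $E = \Omega$ gives a decreasing sequence $w_j \in U_f$ with limit $w$ whose lower semicontinuous regularization satisfies $w^* = (\overline H_f)^*$; but a decreasing limit of $G(\cdot)$-superharmonic functions that is not identically $-\infty$ is $G(\cdot)$-superharmonic, and if it equals $-\infty$ somewhere then by a connectedness/Harnack-type argument it is $-\infty$ everywhere — this forces $\overline H_f \equiv -\infty$ since $\overline H_f \le w_j$ for all $j$ actually requires care, so instead I track that $(\overline H_f)^*(x_0) = -\infty$ and propagate.

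The core case is when $U_f \neq \emptyset$ and $\overline H_f > -\infty$ everywhere (equivalently at one point, by the propagation above). Here I would argue: $U_f$ is downward directed, so by Lemma 5.3 there is a decreasing sequence $v_j \in U_f$ with $v_j \downarrow v$ and $v^* = (\overline H_f)^*$. Each $v_j$ is $G(\cdot)$-superharmonic and bounded below by $\inf_{\partial\Omega} f$ minus $\varepsilon$ considerations — more precisely each $v_j$ is bounded below by some constant (possibly depending on $j$), but since $v_1 \ge v_j$ and we only need a lower bound, and actually we need the limit to be finite: finiteness of $v(x_0) = \overline H_f(x_0) > -\infty$ together with superharmonicity (via Harnack for the associated $G(\cdot)$-harmonic comparison functions, Theorem 5.1) shows $v$ is finite on a dense set, hence $G(\cdot)$-superharmonic. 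Now I would invoke the Poisson modification: on a fixed $G(\cdot)$-regular ball $B \subset\subset \Omega$, replace each $v_j$ by $P(v_j, B)$, which by Theorem 5.3 is still in $U_f$ (superharmonic in $\Omega$, $\le v_j$, same boundary behavior at $\partial\Omega$ since it agrees with $v_j$ there), and is $G(\cdot)$-harmonic in $B$. The sequence $P(v_j,B)$ is decreasing, bounded, with limit $\ge \overline H_f$; applying the Harnack convergence theorem (Theorem 5.2) to the decreasing sequence — after passing to $P(v_j,B) - P(v_1,B)$ or reflecting to get a nonnegative increasing sequence — the limit is $G(\cdot)$-harmonic in $B$. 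One checks the limit on $B$ equals $\overline H_f$ on $B$ by a standard squeeze using $v^* = (\overline H_f)^*$ and the minimum principle. Since $B$ was an arbitrary regular ball and these exhaust $\Omega$ (Corollary 4.1), $\overline H_f$ is $G(\cdot)$-harmonic in $\Omega$, which is alternative (i).

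I expect the main obstacle to be the bookkeeping around \emph{boundedness and finiteness}: Choquet's lemma only controls $v^*$, not $v$ itself, so one must separately argue that the limit function is not $-\infty$ on a large set and that its lsc regularization coincides with $\overline H_f$ on the relevant balls — this is where the minimum principle for $G(\cdot)$-superharmonic functions and the interior regularity must be combined carefully. A secondary subtlety is that the Harnack convergence theorem as stated (Theorem 5.2) is for \emph{increasing} sequences of \emph{nonnegative} $G(\cdot)$-harmonic functions, whereas the Poisson-modified sequence is decreasing; the fix is to normalize by subtracting the first (or a suitable constant/harmonic function) and reflect, but this uses that differences of $G(\cdot)$-harmonic functions are not $G(\cdot)$-harmonic in the nonlinear setting — so instead one should apply the theorem to $P(v_1,B) - P(v_j,B)$ only after noting it is bounded and invoking a Harnack-type compactness (interior Hölder estimates from Theorem 3.1 plus Arzelà–Ascoli) rather than linearity. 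The statement for $\underline H_f$ follows by applying the result to $-f$ and using $\underline H_f = -\overline H_{-f}$ (Lemma 5.1, part 1).
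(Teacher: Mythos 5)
Your proposal follows essentially the same route as the paper: handle $U_f=\emptyset$ by convention, use downward directedness and Choquet's topological lemma to get a decreasing sequence in $U_f$, replace it by Poisson modifications on Sobolev $G(\cdot)$-regular subdomains, apply the Harnack convergence theorem, and identify the limit with $\overline{H}_f$ via the lower semicontinuous regularization, concluding for $\underline{H}_f$ by $\underline{H}_f=-\overline{H}_{-f}$. The only real differences are organizational (the paper extracts the $\equiv-\infty$ alternative directly from the harmonic-or-$-\infty$ dichotomy in each subdomain rather than propagating it separately) and that your worry about applying the convergence theorem to a \emph{decreasing} sequence is a point the paper silently glosses; your suggested fix via local estimates, and your off-by-one citations of Theorems 5.1 and 5.2, do not affect correctness.
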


\begin{proof}
If the upper class $U_f$ is empty, then $\overline{H}_f = \infty$.\\
Suppose that the upper class $U_f$ is not empty, then $U_f$ is down ward directed. So, by Choquet's topological lemma, there exists a decreasing sequence of functions $u_i \in U_f$ convergent to a function $u$ such that $u^* = \overline{H}_f$ in $\Omega$.\\ 
Let $D \subset \subset \Omega$ is a Sobolev $G(\cdot)$-regular and consider the Poisson modification $P(u_i,D)$. Using Theorem $5.2$, we have $P(u_i,D) \in U_f$. Then, by the Harnack convergence theorem,  $\lim_{i \rightarrow \infty}P(u_i,D)$ is either $G(\cdot)$-harmonic or identically $-\infty$ in $D$. As $\overline{H}_f \leq P(u_i,D) \leq u_i$ and $u^* = \overline{H}_f$, then $\overline{H}_f = \lim_{i \rightarrow \infty}P(u_i,B)$ in $D$. Therefore $\overline{H}_f$ is either $G(\cdot)$-harmonic or identically $-\infty$ in $\Omega$.
\end{proof}

\begin{Th}[Wiener theorem]
Let $G(\cdot) \in \Phi(\Omega)$  strictly convex and satisfy $(SC)$, $(A_0)$,  $(A_1)$ and $(A_{1,n})$. Suppose that $f: \partial \Omega \to \R$ is continuous. Then $f$ is $G(\cdot)$-resolutive in $\Omega$, i.e $\overline{H}_f = \underline{H}_f := H_f$.
\end{Th}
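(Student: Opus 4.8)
The plan is to establish the two inequalities $\overline{H}_f \le \underline{H}_f$ and $\underline{H}_f \le \overline{H}_f$; the latter is automatic from Lemma~5.1(2), so the real content is showing $\overline{H}_f \le \underline{H}_f$ together with the fact that both are $G(\cdot)$-harmonic (not $\pm\infty$). Since $f$ is continuous on the compact set $\partial\Omega$ it is bounded, say $|f| \le M$; then the constant function $M \in U_f$ and $-M \in L_f$, so both classes are nonempty, $\overline{H}_f \le M < \infty$ and $\underline{H}_f \ge -M > -\infty$. By Theorem~5.3 this forces both $\overline{H}_f$ and $\underline{H}_f$ to be $G(\cdot)$-harmonic in $\Omega$. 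So once the inequality $\overline{H}_f \le \underline{H}_f$ is proved, Definition~5.2 gives resolutivity immediately.

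\textbf{Reduction to smooth boundary data.} First I would treat $f$ that extends to a function $w \in W^{1,G(\cdot)}(\Omega)\cap C(\overline\Omega)$ (for instance $f$ the restriction of a Lipschitz function, using a Tietze-type extension and mollification on a neighborhood of $\overline\Omega$). For such $w$, let $h$ be the $G(\cdot)$-harmonic function in $\Omega$ with $h - w \in W_0^{1,G(\cdot)}(\Omega)$, supplied by Theorem~3.1. I would exhaust $\Omega$ by Sobolev $G(\cdot)$-regular open sets $D_1 \subset\subset D_2 \subset\subset \cdots$ (possible by the remark following Corollary~4.1) and show that $h$ is a candidate on both sides: using the Poisson modification on the $D_j$ and the comparison principle (Lemma~5.3), one checks that $h$ is simultaneously $\ge$ every member of $L_f$ and $\le$ every member of $U_f$ in the limit, hence $\overline{H}_f \le h \le \underline{H}_f$. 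Combined with Lemma~5.1(2) this yields $\overline{H}_f = \underline{H}_f = h$ for such data.

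\textbf{Passage to general continuous $f$.} For arbitrary continuous $f$ on $\partial\Omega$, approximate uniformly by functions $f_k$ of the special type above with $\|f - f_k\|_\infty < \varepsilon_k \to 0$. Using monotonicity and translation (Lemma~5.1(3)--(4)) we get
$$\overline{H}_{f_k} - \varepsilon_k = \overline{H}_{f_k - \varepsilon_k} \le \overline{H}_f \le \overline{H}_{f_k + \varepsilon_k} = \overline{H}_{f_k} + \varepsilon_k,$$
and the same string with $\underline{H}$ in place of $\overline{H}$. Since $\overline{H}_{f_k} = \underline{H}_{f_k}$, we obtain $|\overline{H}_f - \underline{H}_f| \le 2\varepsilon_k$ for every $k$, hence $\overline{H}_f = \underline{H}_f$; call the common value $H_f$. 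Harnack convergence (or uniform convergence of the $\overline{H}_{f_k}$) gives that $H_f$ is $G(\cdot)$-harmonic, so $f$ is $G(\cdot)$-resolutive.

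\textbf{The main obstacle} I expect is the special case: verifying that the Dirichlet--Sobolev solution $h$ genuinely lies between the lower and upper classes. This requires the Poisson-modification machinery of Section~5.2 together with the comparison principle, and one must be careful that the boundary behavior of $h$ is controlled only in the Sobolev-regular exhausting subdomains, not a priori at $\partial\Omega$ — so the argument proceeds through the exhaustion and a limiting use of Lemma~5.3, rather than by a direct boundary comparison. The approximation step, by contrast, is routine once the special case and Lemma~5.1 are in hand.
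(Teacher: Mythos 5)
Your overall skeleton (reduce via Lemma 5.1 to smooth/Sobolev boundary data, then identify the Perron solutions with the Dirichlet--Sobolev solution $h$ through an exhaustion by Sobolev $G(\cdot)$-regular domains, Poisson modification, and Harnack convergence) is the same as the paper's, and your approximation step is exactly the paper's $\varepsilon$-sandwich. But the core of your special case contains a genuine gap: the property you propose to verify, namely that $h$ is $\ge$ every member of $L_f$ and $\le$ every member of $U_f$, would only give $\underline{H}_f \le h \le \overline{H}_f$, i.e.\ nothing beyond Lemma 5.1(2); it cannot yield the inequality you then assert, $\overline{H}_f \le h \le \underline{H}_f$, which is what resolutivity actually needs. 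To prove $\overline{H}_f \le h$ you must produce functions of the \emph{upper} class that dominate $\overline{H}_f$ and converge down to $h$; comparing $h$ against members of $U_f$ via the comparison principle is not even available, because the boundary behaviour of $h$ at $\partial\Omega$ is unknown (as you yourself note), and in any case such a comparison points in the wrong direction.

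The missing ingredient is the paper's use of the obstacle problem: for smooth data $\varphi$ one takes the solution $v$ of the obstacle problem with $\varphi$ as obstacle and as boundary data, which is a $G(\cdot)$-superharmonic member of the upper class with $v-\varphi \in W_0^{1,G(\cdot)}(\Omega)$; its Poisson modifications $P(v,D_j)$ on the exhausting Sobolev-regular domains remain in the upper class, decrease, dominate $\overline{H}_\varphi$, and by the Harnack convergence theorem their limit is $G(\cdot)$-harmonic with the same Sobolev boundary data, hence equals the Dirichlet--Sobolev solution by uniqueness (Theorem 3.1). This gives $\overline{H}_\varphi \le h$, and the symmetric construction gives $h \le \underline{H}_\varphi$, whence equality. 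Note that applying the Poisson modification to $h$ itself accomplishes nothing, since $h$ is already $G(\cdot)$-harmonic; without some construction of upper-class (and lower-class) functions converging to $h$, the special case is unproved and the approximation step has nothing to rest on.
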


\begin{proof}
Let $f: \partial \Omega \to \R$ is a continuous function. By the Tietze extension theorem, we can assume $f \in C(\R^n)$, then there exists  $\varphi_i \in C^\infty(\R^n)$ such that for all $\epsilon > 0$, we have
$$ \varphi_i(\xi) - \epsilon < f(\xi) < \varphi_i(\xi) + \epsilon \; \; \text{when} \; \; \xi \in \partial \Omega.$$  
Thus, 
$$\underline{H}_{\varphi_i} - \epsilon \leq \underline{H}_{\varphi_i-\epsilon} \leq \underline{H}_f \leq \overline{H}_f\leq \overline{H}_{\varphi_i + \epsilon} \leq \overline{H}_{\varphi_i} + \epsilon. $$
So, if $\underline{H}_{\varphi_i} = \overline{H}_{\varphi_i}$, then $\underline{H}_f=\overline{H}_f$. Hence, it suffices to prove the result for $\varphi_i$. \\
Let $H_i$ be a $G(\cdot)$-harmonic in $\Omega$ such that  $H_i - \varphi_i \in W^{1, G(\cdot)}_0(\Omega)$. Let $v_i$ denote the $G(\cdot)$-solution to obstacle problem with $\varphi_i$ acting as obstacle and also boundary data. So $v_i \in U_f$. Choose Sobolev $G(\cdot)$-regular domains $D_j \subset  \subset \Omega$ such that $\Omega = \cup_{j \geq 1}D_j$ and $D_1 \subset D_2 \subset ...$. Construct the sequence of Poisson modification
$$P_{i,j} = P(v_i,D_j)$$
Then $\{P_{i,j}\}_j$ is non-increasing, $P_{i,j} \in U_f$ and $P_{i,j} - \varphi_i$. Then $P_{i,j} - \varphi_i = P_{i,j} - v_i + v_i - \varphi_i \in W^{1, G(\cdot)}_0(\Omega)$. Let $P_i = \lim_{j\to\infty}P_{i,j}$. As $\overline{H}_{\varphi_i} \leq P_{i,j}$, then by the Harnack convergence theorem $P_i$ is $G(\cdot)$-harmonic in $\Omega$ and $P_i - \varphi_i \in W^{1, G(\cdot)}_0(\Omega)$. So, $P_i = H_i$ in $\Omega$. Hence
$\overline{H}_{\varphi_i} \leq P_i = H_i$. By a similar proof, we have $H_i \leq \underline{H}_{\varphi_i}$. Then
$$H_i \leq \underline{H}_{\varphi_i} \leq \overline{H}_{\varphi_i} \leq H_i.$$
Hence $$\underline{H}_{\varphi_i} = \overline{H}_{\varphi_i}.$$
This concludes the proof.
\end{proof}

As a consequence of the previous theorem, the Perron $G(\cdot)$-solution coincides with the $G(\cdot)$-solution of Dirichlet-Sobolev with boundary $f$.

\begin{cor}
Let $G(\cdot) \in \Phi(\Omega)$  strictly convex and satisfy $(SC)$, $(A_0)$,  $(A_1)$ and $(A_{1,n})$. If $f \in W^{1, G(\cdot)}(\Omega)\cap C(\overline{\Omega})$. Then $\overline{H}_f$ is the unique $G(\cdot)$-harmonic function such that $\overline{H} - f \in W_0^{1, G(\cdot)}(\Omega)$.
\end{cor}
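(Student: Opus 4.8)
The plan is to prove the corollary by combining the Wiener resolutivity theorem (Theorem 5.4) with the uniqueness and existence of the Dirichlet–Sobolev solution (Theorem 3.1). Let $f \in W^{1,G(\cdot)}(\Omega)\cap C(\overline{\Omega})$. Since $f$ is continuous on $\partial\Omega$, Theorem 5.4 applies and gives that $f$ is $G(\cdot)$-resolutive, so $\overline{H}_f = \underline{H}_f =: H_f$ is $G(\cdot)$-harmonic in $\Omega$. It remains to identify this Perron solution with the unique $G(\cdot)$-harmonic function $h$ furnished by Theorem 3.1 with $h - f \in W_0^{1,G(\cdot)}(\Omega)$; uniqueness of the latter is exactly the strict convexity plus $(A_0)$ assertion in Theorem 3.1, so the only real content is the identification $H_f = h$.

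For that identification I would reuse essentially the argument already carried out inside the proof of Theorem 5.4, but now for the Sobolev function $f$ itself rather than for the smooth approximants $\varphi_i$. First I would let $v$ be the solution of the obstacle problem in $\mathcal{K}_{f,f}(\Omega)$ with $f$ serving both as obstacle and as boundary data; as noted there, $v$ is $G(\cdot)$-superharmonic and belongs to the upper class $U_f$, and $v - f \in W_0^{1,G(\cdot)}(\Omega)$. Next I would exhaust $\Omega$ by Sobolev $G(\cdot)$-regular subdomains $D_1 \subset D_2 \subset \cdots$ with $\bigcup_j D_j = \Omega$ (available by Corollary 4.1 and the remark following it), form the Poisson modifications $P_j := P(v, D_j)$, and invoke Theorem 5.2 to get $P_j \in U_f$, the monotonicity $P_1 \geq P_2 \geq \cdots$, and, via Remark 5.1, the membership $P_j - f = (P_j - v) + (v - f) \in W_0^{1,G(\cdot)}(\Omega)$. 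Since $\overline{H}_f \le P_j$ and $\overline{H}_f$ is $G(\cdot)$-harmonic (not $\equiv\pm\infty$), the Harnack convergence theorem (Theorem 5.1) shows $P_\infty := \lim_j P_j$ is $G(\cdot)$-harmonic in $\Omega$; the $W_0^{1,G(\cdot)}$-membership is preserved in the limit, so $P_\infty - f \in W_0^{1,G(\cdot)}(\Omega)$, whence $P_\infty = h$ by the uniqueness in Theorem 3.1. Running the symmetric argument with the lower class $L_f$ (using $-f$, or an obstacle-from-below solution) yields $\underline{H}_f \ge$ a $G(\cdot)$-harmonic function with the same boundary data, again equal to $h$; sandwiching gives $h \le \underline{H}_f \le \overline{H}_f \le h$, so $H_f = h$.

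The step I expect to be the main obstacle is the verification that the $W_0^{1,G(\cdot)}$-membership survives the passage to the limit $P_\infty = \lim_j P_j$, i.e. that a monotone limit of functions $P_j$ with $P_j - f \in W_0^{1,G(\cdot)}(\Omega)$ still satisfies $P_\infty - f \in W_0^{1,G(\cdot)}(\Omega)$. This requires a uniform gradient bound for the $P_j$ (a Caccioppoli-type estimate for $G(\cdot)$-(super)solutions together with the structure condition $(SC)$ and the comparison $P_j \le v$) to extract weak convergence of the gradients in $L^{G(\cdot)}$, plus closedness of $W_0^{1,G(\cdot)}(\Omega)$ under weak convergence; once the limit function is known to be $G(\cdot)$-harmonic and to have the right boundary values in the Sobolev sense, Theorem 3.1's uniqueness closes the argument. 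The remaining pieces — resolutivity from Theorem 5.4, $P_j \in U_f$ from Theorem 5.2, harmonicity of the limit from Theorem 5.1 — are quotable directly.
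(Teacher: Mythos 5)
Your argument is workable, but it is not how the corollary is obtained in the paper, and it is heavier than necessary. The paper gives no separate proof: the identification of the Perron solution with the Dirichlet--Sobolev solution is already contained in the proof of Theorem 5.4 for the smooth approximants (there one shows $\overline{H}_{\varphi_i}=\underline{H}_{\varphi_i}=H_i$ with $H_i-\varphi_i\in W_0^{1,G(\cdot)}(\Omega)$), and for $f\in W^{1,G(\cdot)}(\Omega)\cap C(\overline{\Omega})$ the corollary then follows by a short approximation argument instead of re-running the obstacle/Poisson-modification machinery: choose $\varphi_i$ smooth with $\sup_{\overline{\Omega}}|f-\varphi_i|\le\epsilon$; Lemma 5.1 (items 3 and 4) gives $|\overline{H}_f-\overline{H}_{\varphi_i}|\le\epsilon$, while the weak comparison principle (the one invoked for uniqueness in Theorem 3.1) applied to the Sobolev solutions $h$ (data $f$) and $H_i$ (data $\varphi_i$), whose difference has boundary values $f-\varphi_i$ in the Sobolev sense with $|f-\varphi_i|\le\epsilon$, gives $|h-H_i|\le\epsilon$; since $\overline{H}_{\varphi_i}=H_i$, letting $\epsilon\to 0$ yields $\overline{H}_f=h$. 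This route buys you exactly what you need while completely bypassing the step you yourself flag as the main obstacle; your route buys nothing extra here, since resolutivity of $f$ is anyway quoted from Theorem 5.4.

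If you do keep your direct argument (a legitimate replay of the Theorem 5.4 proof with $f$ in place of $\varphi_i$), two caveats. First, the membership $v\in U_f$ of the obstacle-problem solution, i.e. $\liminf_{x\to\xi}v(x)\ge f(\xi)$ for every $\xi\in\partial\Omega$, is asserted rather than proved in the paper, so you are inheriting an unproved claim, not quoting a theorem. Second, your proposed repair of the limit step via a Caccioppoli estimate does not work as stated: Caccioppoli estimates involve cut-off functions and only control $\nabla P_j$ on compact subsets of $\Omega$, whereas $P_\infty-f\in W_0^{1,G(\cdot)}(\Omega)$ requires control up to $\partial\Omega$. The correct uniform bound comes from energy minimality of the harmonic replacement: $h_j$ minimizes $\int_{D_j}G(x,|\nabla\cdot|)\,\mathrm{d}x$ among functions with its Sobolev boundary values, hence $\int_{\Omega}G(x,|\nabla P_j|)\,\mathrm{d}x\le\int_{\Omega}G(x,|\nabla v|)\,\mathrm{d}x$ for all $j$; with $(SC)$ this gives norm boundedness, and convexity and (norm-)closedness of $W_0^{1,G(\cdot)}(\Omega)$, hence weak closedness, let you pass to the limit. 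To be fair, the same unjustified limit step occurs in the paper's own proof of Theorem 5.4, so your sketch is no weaker than the paper on this point; but the approximation argument above avoids it altogether.
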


\section{${G(\cdot)}$-potential}

\begin{df}
Let $G(\cdot) \in \Phi(\Omega)$  strictly convex and satisfy $(SC)$, $(A_0)$,  $(A_1)$ and $(A_{1,n})$. Let $K \subset B$ be compact and $\psi \in C^\infty_0(B)$ be such that $\psi = 1$ on $K$. We define the ${G(\cdot)}$-potential for $K$ with respect to $B$ as follows
$$\mathcal{R}_{G(\cdot)}(K,B) := \left \{
   \begin{array}{r c l}
   h & \; \text{in} & B \backslash K\\
   1 & \text{in} & K
   \end{array}
   \right.$$
where $h$ is the unique $G(\cdot)$-harmonic function in $B \backslash K$ such that $h - \psi \in W_0^{1, G(\cdot)}(B \backslash K)$.
\end{df}

\begin{rma}
The definition of $\mathcal{R}_{G(\cdot)}(K,B)$ is independent of the particular choice of $\psi$. Indeed, if $\tilde{\psi}$ is another such that $\tilde{h}$ is the unique $G(\cdot)$-harmonic function in $B \backslash K$ such that $\tilde{h} - \tilde{\psi} \in W_0^{1, G(\cdot)}(B \backslash K)$, then $h - \tilde{h} \in W_0^{1, G(\cdot)}(B \backslash K)$ and by the uniqueness we have $h = \tilde{h}$ in $ W_0^{1, G(\cdot)}(B \backslash K)$. 
\end{rma}

\subsection{${G(\cdot)}$-potential and ${G(\cdot)}$-capacity}

Using the same method, as in $\cite{ref10}$, we prove the following lemma.

\begin{lm}
Let $G(\cdot) \in \Phi(B) \cap C^1(\R^+)$ satisfies $(SC)$ and $u = \mathcal{R}_{G(\cdot)}(K,B)$ the ${G(\cdot)}$-potential for $K$ with respect to $B$. Then $u$ is a $G(\cdot)$-supersolution in $B$.
\end{lm}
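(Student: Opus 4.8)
The plan is to show that $u=\mathcal{R}_{G(\cdot)}(K,B)$ is a $G(\cdot)$-supersolution in $B$ by testing the weak formulation against an arbitrary nonnegative $\varphi \in W_0^{1,G(\cdot)}(B)$. On $B\setminus K$ the function $u=h$ is $G(\cdot)$-harmonic, so the obvious source of a sign is the ``jump'' of the gradient across $\partial K$, and the fact that $u\equiv 1$ on $K$ while $u=h\le 1$ in $B\setminus K$ (the latter from the comparison/maximum principle, since $h-\psi\in W_0^{1,G(\cdot)}(B\setminus K)$ and $0\le\psi\le1$). First I would record this bound $0\le u\le 1$, and note that $u\in W^{1,G(\cdot)}(B)$ with $\nabla u=0$ a.e.\ on $K$ and $\nabla u=\nabla h$ on $B\setminus K$; hence the integral $\int_B \frac{g(x,|\nabla u|)}{|\nabla u|}\nabla u\cdot\nabla\varphi\,dx$ reduces to $\int_{B\setminus K}\frac{g(x,|\nabla h|)}{|\nabla h|}\nabla h\cdot\nabla\varphi\,dx$.

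Next I would approximate. The clean way, following the method of \cite{ref10}, is to use the $G(\cdot)$-potential as the solution of an obstacle problem: $u$ is (up to the identification on $K$) the solution of the obstacle problem in $\mathcal{K}_{\psi',\psi}(B)$ for a suitable obstacle forcing $u\ge 1$ near $K$, equivalently $u=\min\{1,\,\text{something}\}$, and a solution of an obstacle problem is always a supersolution of the equation — this is the standard fact that for test functions $\varphi\ge 0$ one has $u+\varphi$ (or $\max(u,\,u+\text{small})$) admissible in the obstacle class, giving the variational inequality with the correct sign. Concretely, I would pick $\varphi\ge0$ in $C_0^\infty(B)$, observe that $u+t\varphi\wedge 1$-type competitors stay in the admissible set, and pass $t\to 0^+$. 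The alternative, more hands-on route is to regularize $K$ by open neighbourhoods $K_j\downarrow K$ with smooth boundary, let $u_j$ be the corresponding potentials, show $u_j\to u$ in $W^{1,G(\cdot)}$ (using $(SC)$ to get uniform modular bounds and the uniqueness/comparison to get monotone or dominated convergence), and for each $u_j$ integrate by parts across $\partial K_j$: since $u_j\equiv 1$ on $K_j$ is the maximal value, the outward normal derivative of $h_j$ on $\partial K_j$ is $\le 0$, which produces exactly the nonnegative boundary term $-\int_{\partial K_j}\frac{g(x,|\nabla h_j|)}{|\nabla h_j|}\partial_\nu h_j\,\varphi\,d\sigma\ge 0$.

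Then I would assemble: for $\varphi\ge 0$,
$$\int_B \frac{g(x,|\nabla u|)}{|\nabla u|}\nabla u\cdot\nabla\varphi\,dx=\lim_j\int_{B\setminus K_j}\frac{g(x,|\nabla h_j|)}{|\nabla h_j|}\nabla h_j\cdot\nabla\varphi\,dx=\lim_j\Big(-\int_{\partial K_j}\frac{g(x,|\nabla h_j|)}{|\nabla h_j|}\partial_\nu h_j\,\varphi\,d\sigma\Big)\ge 0,$$
the middle equality being the divergence theorem together with $\mathrm{div}\big(\frac{g(x,|\nabla h_j|)}{|\nabla h_j|}\nabla h_j\big)=0$ in $B\setminus K_j$, and the inequality from $\partial_\nu h_j\le 0$ and $\varphi\ge 0$. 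A density argument ($C_0^\infty$ nonnegative functions are dense among nonnegative $W_0^{1,G(\cdot)}(B)$ functions, which holds under $(SC)$ and $(A_0),(A_1)$) upgrades this to all nonnegative $\varphi\in W_0^{1,G(\cdot)}(B)$, giving the supersolution property.

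The main obstacle is making the limiting/integration-by-parts step rigorous without extra regularity on $K$: one must justify that $\nabla u=0$ a.e.\ on $K$ (true for Sobolev functions that are constant on a set, a standard fact), that the approximating potentials converge strongly enough in $W^{1,G(\cdot)}$ to pass the nonlinear flux $\frac{g(x,|\nabla h_j|)}{|\nabla h_j|}\nabla h_j$ to the limit, and that the sign of the normal derivative is genuinely usable — which is why the cleaner argument is really the obstacle-problem one, where no boundary integral or smoothing of $K$ is needed: $u$ solves an obstacle problem, and solutions of obstacle problems are supersolutions by construction (take $v=u+\varphi$, $\varphi\ge 0$, in the variational inequality of the obstacle problem, which is legitimate since $u+\varphi\ge u\ge\psi$ keeps $v$ admissible). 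I would present that as the primary proof and relegate the approximation picture to a remark.
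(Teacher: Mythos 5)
Your primary argument rests on the assertion that $u=\mathcal{R}_{G(\cdot)}(K,B)$ is ``(up to the identification on $K$) the solution of the obstacle problem \dots for a suitable obstacle forcing $u\ge 1$ near $K$,'' and then invokes the standard fact that lower-obstacle solutions are supersolutions. But that identification is exactly the nontrivial content here, and you never prove it: the paper defines $\mathcal{R}_{G(\cdot)}(K,B)$ as the $G(\cdot)$-harmonic function in $B\setminus K$ with Sobolev boundary data $\psi$, extended by $1$ on $K$, and to equate this with a capacitary (lower-)obstacle minimizer you would have to (i) set up and solve that obstacle problem --- note the only obstacle problem available in this paper, $\mathcal{K}_{\psi,v_0}$ of Section~3, is an \emph{upper} obstacle class $u\le\psi$, so your inequality ``$u+\varphi\ge u\ge\psi$ keeps $v$ admissible'' does not match the paper's machinery --- (ii) show the minimizer equals $1$ q.e.\ on $K$ and is $G(\cdot)$-harmonic off $K$, and (iii) show its restriction to $B\setminus K$ has the same Sobolev boundary values as $h$ (a quasicontinuity argument) so that uniqueness applies. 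None of this is in the proposal. Your fallback route (smooth exhaustion $K_j\downarrow K$, divergence theorem on $\partial K_j$, sign of $\partial_\nu h_j$) needs up-to-the-boundary regularity of $h_j$ and strong convergence of the nonlinear fluxes, which you acknowledge but do not supply; as written it is not a proof either.

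The paper avoids all of this by verifying the supersolution property directly through the variational characterization (from the reference [21] cited in its proof): $u$ is a $G(\cdot)$-supersolution iff $\int_B G(x,|\nabla u|)\,\mathrm{d}x \le \int_B G(x,|\nabla(u+\varphi)|)\,\mathrm{d}x$ for all nonnegative $\varphi\in W_0^{1,G(\cdot)}(B)$. Given such a $\varphi$, one may replace it by $\min(u+\varphi,1)-u$ (truncation at level $1$ does not increase the modular, and $0\le u\le 1$), so one can assume $u+\varphi\le 1$; since $u=1$ on $K$ this forces $\varphi=0$ a.e.\ on $K$, hence $\varphi\in W_0^{1,G(\cdot)}(B\setminus K)$, and the energy-minimality of the $G(\cdot)$-harmonic $h$ in $B\setminus K$ gives the desired inequality. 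This is essentially your obstacle intuition made direct --- the truncation plays the role of the obstacle --- but it requires no identification with an obstacle solution, no smoothness of $K$, and no boundary integrals. If you want to salvage your write-up, replace the asserted identification by this truncation argument (or prove the identification in full, which is more work than the lemma itself).
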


\begin{proof}
Let $G(\cdot) \in \Phi(\Omega) \cap C^1(\R^+)$. In $\cite{ref21}$ we have $u$ is a $G(\cdot)$-supersolution in $B$ equivalently
$$\int_{B} G(x, |\nabla u|) \, \mathrm{d}x \leq \int_{B} G(x, |\nabla (u + \varphi)|) \, \mathrm{d}x$$
for every nonnegative function $\varphi$ in $ W_0^{1, G(\cdot)}(B)$. So, we can assume that $u+\varphi \leq 1$, a.e in $B$. As $u = 1$ in $K$, then the inequality $u + \varphi \leq 1$ a.e. implies that $\varphi = 0$ a.e. on $K$. Hence $\varphi \in W_0^{1, G(\cdot)}(B \backslash K)$. Since $u$ is a $G(\cdot)$-harmonic function in $B \backslash K$, then
$$\int_{B} G(x, |\nabla u|) \, \mathrm{d}x = \int_{B \backslash K} G(x, |\nabla u|) \, \mathrm{d}x   \leq \int_{B \backslash K} G(x, |\nabla (u + \varphi)|) \, \mathrm{d}x \leq \int_{B} G(x, |\nabla (u + \varphi)|) \, \mathrm{d}x.$$
Therefore $u$ is a $G(\cdot)$-supersolution in $B$.
\end{proof}

Using the Riesz representation theorem, we have the following theorem.

\begin{lm}
Let $G(\cdot) \in \Phi(\Omega)$. For every $G(\cdot)$-supersolution $u$ in $\Omega$, there is a Radon measure $\mu[u] \in  \left( W^{1,G(\cdot)}_0(\Omega)\right)^*$ such that 
$$\int_{\Omega} \frac{g(x,|\nabla u|)}{|\nabla u|} \nabla u \cdot \nabla \varphi \, \mathrm{d}x = \int_{\Omega} \varphi \, \mathrm{d}\mu[u]$$
whenever $ \varphi \in  W^{1,G(\cdot)}_0(\Omega)$.
\end{lm}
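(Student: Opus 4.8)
The plan is to realize the operator as a bounded linear functional on $W^{1,G(\cdot)}_0(\Omega)$ and then invoke the Riesz representation theorem for such functionals, checking afterwards that the representing distribution is a (nonnegative) Radon measure. First I would fix a $G(\cdot)$-supersolution $u\in W^{1,G(\cdot)}(\Omega)$ and define, for $\varphi\in W^{1,G(\cdot)}_0(\Omega)$,
$$
\Lambda(\varphi) := \int_{\Omega}\frac{g(x,|\nabla u|)}{|\nabla u|}\,\nabla u\cdot\nabla\varphi\,\mathrm{d}x .
$$
Linearity in $\varphi$ is immediate. To see that $\Lambda$ is well-defined and continuous, I would estimate the integrand by $g(x,|\nabla u|)\,|\nabla\varphi|$ and apply the Hölder inequality in the conjugate pair $(L^{G(\cdot)},L^{G^*(\cdot)})$: the factor $g(\cdot,|\nabla u|)$ lies in $L^{G^*(\cdot)}(\Omega)$ because, by Young's equality and inequality $(2.6)$, $G^*(x,g(x,|\nabla u|))\le (g^0-1)G(x,|\nabla u|)$, and the right-hand side is integrable since $u\in W^{1,G(\cdot)}(\Omega)$ and $G(\cdot)$ satisfies $(SC)$. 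Hence $|\Lambda(\varphi)|\le C\,\|g(\cdot,|\nabla u|)\|_{G^*(\cdot)}\,\|\nabla\varphi\|_{G(\cdot)}\le C_u\,\|\varphi\|_{1,G(\cdot)}$, so $\Lambda\in\bigl(W^{1,G(\cdot)}_0(\Omega)\bigr)^*$.

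Next I would use that $\Lambda$ is a positive functional: by the very definition of $G(\cdot)$-supersolution, $\Lambda(\varphi)\ge 0$ for every nonnegative $\varphi\in W^{1,G(\cdot)}_0(\Omega)$, and in particular for every nonnegative $\varphi\in C_0^\infty(\Omega)$. A positive distribution on $\Omega$ is, by the classical Riesz–Schwartz representation theorem, given by integration against a nonnegative Radon measure $\mu[u]$ on $\Omega$, so that $\Lambda(\varphi)=\int_\Omega \varphi\,\mathrm{d}\mu[u]$ for all $\varphi\in C_0^\infty(\Omega)$. Finally I would extend this identity from $C_0^\infty(\Omega)$ to all of $W^{1,G(\cdot)}_0(\Omega)$ by density: given $\varphi\in W^{1,G(\cdot)}_0(\Omega)$, pick $\varphi_k\in C_0^\infty(\Omega)$ with $\varphi_k\to\varphi$ in $W^{1,G(\cdot)}(\Omega)$; the left-hand side converges because $\Lambda$ is continuous, and the right-hand side converges once we know $\mu[u]$ has finite mass on the relevant sets — which follows because $\Lambda$, being bounded on $W^{1,G(\cdot)}_0(\Omega)$, forces $\mu[u]\in\bigl(W^{1,G(\cdot)}_0(\Omega)\bigr)^*$, i.e. $\int_\Omega \varphi\,\mathrm{d}\mu[u]$ makes sense and is continuous in $\varphi$.

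The main obstacle is the integrability/duality step: one must be careful that $g(\cdot,|\nabla u|)\in L^{G^*(\cdot)}(\Omega)$, which is exactly where the structure condition $(SC)$ and the conjugate-function estimate $(2.6)$ enter; without $(SC)$ the modular and the norm need not be comparable and the Hölder pairing could fail. A secondary technical point is the passage from testing against $C_0^\infty$ to testing against all of $W^{1,G(\cdot)}_0(\Omega)$ while keeping $\mu[u]$ in the dual space; this is routine once the boundedness of $\Lambda$ is in hand, since one simply identifies $\mu[u]$ with $\Lambda$ through the embedding of measures into the dual, but it does rely on the density of $C_0^\infty(\Omega)$ in $W^{1,G(\cdot)}_0(\Omega)$, which holds by definition of the latter space.
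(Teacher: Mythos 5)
Your argument is correct and follows essentially the same route as the paper, which gives no written proof and simply attributes the lemma to the Riesz representation theorem: your realization of the operator as a bounded positive functional (boundedness via the generalized Orlicz H\"older inequality together with inequality $(2.5)$, positivity from the supersolution property, the Riesz--Schwartz representation, then density of $C_0^\infty(\Omega)$ in $W^{1,G(\cdot)}_0(\Omega)$) is exactly the standard way to fill this in. The only caveat, which you yourself flag, is that the estimate $G^*(x,g(x,t))\leq (g^0-1)G(x,t)$ tacitly requires $(SC)$, a hypothesis omitted from the lemma's statement but assumed wherever the lemma is used in the paper.
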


\begin{Th}
Let $G(\cdot) \in \Phi(B) \cap C^1(\R^+)$ satisfies $(SC)$ and $K$ be a compact subset of $B$. If $u = \mathcal{R}_{G(\cdot)}(K,B)$ is the ${G(\cdot)}$-potential for $K$ with respect to $B$ and $\mu[u]$ its associated Radon measure in $W^{1, G(\cdot)}_0(B)^*$, then there exists a constant $C>0$ such that
$$ \frac{1}{C} \text{Cap}_{G(\cdot)}(K;B) \leq \mu[u](K) \leq C\text{Cap}_{G(\cdot)}(K;B)$$
\end{Th}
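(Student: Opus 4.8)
The plan is to exploit the variational characterization of $u=\mathcal{R}_{G(\cdot)}(K,B)$ together with the fact, established in Lemma~6.1, that $u$ is a $G(\cdot)$-supersolution in $B$ whose Riesz measure $\mu[u]$ (Lemma~6.2) is supported on $K$ (since $u$ is $G(\cdot)$-harmonic in $B\setminus K$, testing against $\varphi\in C^\infty_0(B\setminus K)$ kills the measure there). First I would test the equation defining $\mu[u]$ against a function $\varphi$ built from $u$ itself: choosing $\varphi$ with $\varphi=1$ on $K$ and $\varphi\in W^{1,G(\cdot)}_0(B)$ (e.g. the cutoff $\psi$ from the definition, or $\varphi=\min\{u,1\}$-type truncations), one gets $\mu[u](K)=\int_K\varphi\,d\mu[u]=\int_B \frac{g(x,|\nabla u|)}{|\nabla u|}\nabla u\cdot\nabla\varphi\,dx$, which by Young's inequality and $(SC)$ is comparable to $\int_B G(x,|\nabla u|)\,dx$.

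For the upper bound $\mu[u](K)\le C\,\mathrm{Cap}_{G(\cdot)}(K;B)$, the key observation is that $u$ is itself (essentially) the minimizer for the capacity: $u\in W^{1,G(\cdot)}_0(B)$ with $u\ge 1$ on $K$, so $u$ is admissible in $S_{G(\cdot)}(K;B)$, hence $\int_B G(x,|\nabla u|)\,dx\ge \mathrm{Cap}_{G(\cdot)}(K;B)$; and conversely the supersolution/minimizing property of the $G(\cdot)$-potential gives $\int_B G(x,|\nabla u|)\,dx\le C\,\mathrm{Cap}_{G(\cdot)}(K;B)$ by comparing energies with any competitor in $S_{G(\cdot)}(K;B)$ and using the $(SC)$-doubling inequalities (2.2)–(2.3) to absorb constants coming from the fact that $u$ need not equal the capacitary extremal on the nose. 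Combining with the previous paragraph, $\mu[u](K)\le C\int_B G(x,|\nabla u|)\,dx\le C\,\mathrm{Cap}_{G(\cdot)}(K;B)$.

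For the lower bound, I would run the Young-inequality estimate in the other direction. Using the equality case in Young's inequality when $s=g(x,|\nabla u|)$ together with (2.5), one controls $\int_B G(x,|\nabla u|)\,dx$ from below by a multiple of $\int_B \frac{g(x,|\nabla u|)}{|\nabla u|}\nabla u\cdot\nabla\varphi\,dx=\mu[u](K)$ for the specific test function $\varphi$ with $\nabla\varphi$ aligned suitably; more robustly, since $u$ is admissible for the capacity one has directly $\mathrm{Cap}_{G(\cdot)}(K;B)\le\int_B G(x,|\nabla u|)\,dx$, and then $\int_B G(x,|\nabla u|)\,dx\le C\,\mu[u](K)$ follows from the $(SC)$ structure condition applied to the identity $\int_B G(x,|\nabla u|)\,dx\le\int_B g(x,|\nabla u|)|\nabla u|\,dx=\int_B \frac{g(x,|\nabla u|)}{|\nabla u|}\nabla u\cdot\nabla u\,dx$, after replacing $\nabla u$ in the last slot by $\nabla\varphi$ using that $u-\varphi$ vanishes appropriately (testing with $\varphi=u$ directly is not allowed since $u\notin W^{1,G(\cdot)}_0$ unless we truncate, so care is needed here).

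The main obstacle I anticipate is the legitimacy of testing with $u$ (or a function agreeing with $u$ near $K$): $u$ itself lies in $W^{1,G(\cdot)}_0(B)$ only after we know $0\le u\le 1$ and $u=1$ on $K$, so one must justify $u\in S_{G(\cdot)}(K;B)$ and, for the measure identity, approximate by admissible $\varphi$'s and pass to the limit using the weak-$*$ convergence of $\mu[u]$ as a functional on $W^{1,G(\cdot)}_0(B)$ and the reflexivity/uniform convexity furnished by $(SC)$. Keeping track of how the $(SC)$ constants $g_0,g^0$ (via (2.2)–(2.5)) enter each comparison — so that the final constant $C$ depends only on $n,g_0,g^0$ — is the bookkeeping that makes the argument go through.
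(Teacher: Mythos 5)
Your plan is essentially the paper's own proof: note $\operatorname{supp}\mu[u]\subset K$, establish the identity $\mu[u](K)=\int_B \frac{g(x,|\nabla u|)}{|\nabla u|}\nabla u\cdot\nabla u\,\mathrm{d}x$, get the lower bound from the admissibility of $u$ in $S_{G(\cdot)}(K;B)$ together with $(SC)$, and get the upper bound by testing the supersolution inequality against an arbitrary competitor $\varphi\in S_{G(\cdot)}(K;B)$ (via $(\varphi-u)_+$), absorbing with the Young-type inequality $(2.1)$, and taking the infimum. One correction: $u=\mathcal{R}_{G(\cdot)}(K,B)$ does belong to $W^{1,G(\cdot)}_0(B)$, since $u-\psi$ is the zero extension of $h-\psi\in W^{1,G(\cdot)}_0(B\setminus K)$, so your hesitation about testing with $u$ itself (which contradicts your earlier, correct, use of $u$ as a capacity competitor) is unnecessary --- the paper simply tests with $u$ to obtain the identity $(6.1)$.
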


\begin{proof}
Let $u$ the ${G(\cdot)}$-potential for $K$ with respect to $B$ and $\mu[u]$ its associated Radon measure in $W^{1, G(\cdot)}_0(B)^*$. As $u$ is ${G(\cdot)}$-harmonic in $B \backslash K$, then the support of the measure $\mu[u]$ is contained in $K$. Hence
\begin{equation}
\mu[u](K) =\mu[u](B) = \displaystyle \int_{B} u \, \mathrm{d}\mu[u] = \displaystyle \int_{B} \frac{g(x,|\nabla u|)}{|\nabla u|} \nabla u \cdot \nabla u \, \mathrm{d}x.
\end{equation}
On the one hand, as $u \in S_{G(\cdot)}(K;\Omega)$ then 
$$\text{cap}_{G(\cdot)}(K;B) \leq \displaystyle \int_{B} G(x,|\nabla u|) \, \mathrm{d}x \leq C\displaystyle \int_{B} \frac{g(x,|\nabla u|)}{|\nabla u|} \nabla u \cdot \nabla u \, \mathrm{d}x \leq C \mu[u](K)$$
On the other hand, let $\varphi \in S_{G(\cdot)}(K;\Omega)$ and we consider $\psi = \max\{\varphi-u,u\}$, so the nonnegative function $\psi - u \in W^{1, G(\cdot)}_0(B)$. Since $u$ is a $G(\cdot)$-supersolution, we have
$$ \displaystyle \int_{B} \frac{g(x,|\nabla u|)}{|\nabla u|} \nabla u \cdot \nabla (\varphi - u) \, \mathrm{d}x \geq \displaystyle \int_{B} \frac{g(x,|\nabla u|)}{|\nabla u|} \nabla u \cdot \nabla (\psi - u) \, \mathrm{d}x \geq 0.$$
Then
$$\displaystyle \int_{B} \frac{g(x,|\nabla u|)}{|\nabla u|} \nabla u \cdot \nabla u \, \mathrm{d}x \leq \displaystyle \int_{B} \frac{g(x,|\nabla u|)}{|\nabla u|} \nabla u \cdot \nabla \varphi \, \mathrm{d}x.$$
Using the inequality $(2.1)$, we get
$$\begin{array}{ll}
\displaystyle \int_{B} G(x,|\nabla u|) \, \mathrm{d}x & \leq C \displaystyle \int_{B} g(x,|\nabla u|)|\nabla \varphi| \, \mathrm{d}x\\
& \leq \displaystyle \frac{1}{2} \int_{B} G(x,|\nabla u|) \, \mathrm{d}x + C  \int_{B} G(x,|\nabla \varphi|) \, \mathrm{d}x.
\end{array}$$
Hence
$$\int_{B} G(x,|\nabla u|) \, \mathrm{d}x \leq C\int_{B} G(x,|\nabla \varphi|) \, \mathrm{d}x. $$
By the equality $(6.1)$, we have
$$\mu[u](K) \leq  C\int_{B} G(x,|\nabla u|) \, \mathrm{d}x \leq C\int_{B} G(x,|\nabla \varphi|) \, \mathrm{d}x.$$
Taking the infimum of the functions $\varphi \in S_{G(\cdot)}(K;B)$, we obtain
$$ \mu[u](K) \leq C\text{cap}_{G(\cdot)}(K;B).$$
This concludes the proof.
\end{proof}

\subsection{Estimation of ${G(\cdot)}$-potential}

In $\cite{ref4}$, we proved the following Caccioppoli type estimate of supersolutions to equation $(3.1)$.

\begin{lm}
Let $G(\cdot) \in \Phi(2B)$ satisfies $(SC)$. Let $u$ be a nonpositive $G(\cdot)$-supersolution of $(3.1)$ in a ball $2B$, $\eta \in C^\infty_0(2B)$ with $0 \leq \eta \leq 1$ and $|\nabla \eta| \leq \lfrac{1}{r} $. Then, there exits a constant $C$ such that
$$\displaystyle \int_{2B} G(x, |\nabla u|) \eta^{g^0} \, \mathrm{d}x \leq C \displaystyle \int_{2B} G^+\left(\frac{-u}{r}\right) \, \mathrm{d}x.$$
\end{lm}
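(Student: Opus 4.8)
The plan is to carry out the usual Caccioppoli test, inserting $\varphi=-u\,\eta^{g^0}$ into the supersolution inequality. Because $u\le 0$ and $0\le\eta\le 1$, the function $\varphi$ is nonnegative, and because $u\in W^{1,G(\cdot)}(2B)$ while $\eta^{g^0}\in C^1_0(2B)$ (note $g^0>1$, so $t\mapsto\eta^{g^0}$ is $C^1$), the product rule gives $\nabla\varphi=-\eta^{g^0}\nabla u-g^0\,u\,\eta^{g^0-1}\nabla\eta$, both summands of which lie in $L^{G(\cdot)}(2B)$, so $\varphi\in W^{1,G(\cdot)}_0(2B)$ is an admissible test function; if $u$ is only a local supersolution one first replaces it by $\max(u,-k)$ and passes to the limit $k\to\infty$. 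Plugging $\varphi$ into $\int_{2B}\frac{g(x,|\nabla u|)}{|\nabla u|}\nabla u\cdot\nabla\varphi\,\mathrm{d}x\ge 0$, expanding, and isolating the leading term yields
$$\int_{2B} g(x,|\nabla u|)\,|\nabla u|\,\eta^{g^0}\,\mathrm{d}x\;\le\; g^0\int_{2B}(-u)\,\eta^{g^0-1}\Bigl|\tfrac{g(x,|\nabla u|)}{|\nabla u|}\nabla u\cdot\nabla\eta\Bigr|\,\mathrm{d}x\;\le\;\frac{g^0}{r}\int_{2B}(-u)\,\eta^{g^0-1}\,g(x,|\nabla u|)\,\mathrm{d}x,$$
where we used $\bigl|\tfrac{g(x,|\nabla u|)}{|\nabla u|}\nabla u\cdot\nabla\eta\bigr|\le g(x,|\nabla u|)\,|\nabla\eta|$ and $|\nabla\eta|\le 1/r$.

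The structure condition $(SC)$ bounds the left-hand side below by $g_0\int_{2B}G(x,|\nabla u|)\,\eta^{g^0}\,\mathrm{d}x$. For the right-hand side one splits off a controlled multiple of this same quantity via Young's inequality $st\le G^*(x,s)+G(x,t)$ with $s=\lambda\,g(x,|\nabla u|)\,\eta^{g^0-1}$ and $t=\frac{-u}{\lambda r}$, where $\lambda\in(0,1]$ will be fixed small. Since $\lambda\eta^{g^0-1}\le 1$, the scaling inequality analogous to $(2.3)$ for $G^*$ (which satisfies $(SC)$ with exponents $\tfrac{g^0}{g^0-1}$ and $\tfrac{g_0}{g_0-1}$ by $(2.4)$), combined with $(2.5)$, gives
$$G^*\!\bigl(x,\lambda\,g(x,|\nabla u|)\,\eta^{g^0-1}\bigr)\;\le\;\bigl(\lambda\,\eta^{g^0-1}\bigr)^{\frac{g^0}{g^0-1}}G^*\!\bigl(x,g(x,|\nabla u|)\bigr)\;\le\;\lambda^{\frac{g^0}{g^0-1}}(g^0-1)\,\eta^{g^0}\,G(x,|\nabla u|),$$
the conjugate exponent $\tfrac{g^0}{g^0-1}$ being precisely what turns $\eta^{g^0-1}$ into $\eta^{g^0}$; and $(2.2)$ gives $G\bigl(x,\tfrac{-u}{\lambda r}\bigr)\le\lambda^{-g^0}G\bigl(x,\tfrac{-u}{r}\bigr)$.

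Substituting these two bounds, integrating over $2B$ and multiplying by $g^0$, one arrives at
$$g_0\int_{2B}G(x,|\nabla u|)\,\eta^{g^0}\,\mathrm{d}x\;\le\;g^0(g^0-1)\lambda^{\frac{g^0}{g^0-1}}\int_{2B}G(x,|\nabla u|)\,\eta^{g^0}\,\mathrm{d}x\;+\;g^0\lambda^{-g^0}\int_{2B}G\Bigl(x,\tfrac{-u}{r}\Bigr)\mathrm{d}x.$$
The integral $\int_{2B}G(x,|\nabla u|)\eta^{g^0}\,\mathrm{d}x$ is finite since $u\in W^{1,G(\cdot)}(2B)$, so choosing $\lambda$ small enough that $g^0(g^0-1)\lambda^{g^0/(g^0-1)}\le g_0/2$ lets us absorb the first term on the right into the left; finally $G\bigl(x,\tfrac{-u}{r}\bigr)\le G^+_{2B}\bigl(\tfrac{-u}{r}\bigr)$ pointwise, which gives the assertion with $C=C(g_0,g^0)$. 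The one genuinely delicate point is the power-of-$\eta$ bookkeeping in the middle display: the Young pairing must be chosen so the conjugate exponent of $G^*$ reproduces exactly $\eta^{g^0}$ and the leftover factor matches the $\eta^{g^0}$ on the left; a secondary, more routine, technicality is the admissibility of $\varphi$ when $u$ is unbounded, handled by the truncation step above.
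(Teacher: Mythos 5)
Your proof is correct, and it is the standard Caccioppoli argument: the paper itself contains no proof of this lemma, deferring to \cite{ref4}, and your computation (testing with $\varphi=-u\,\eta^{g^0}$, using $(SC)$, Young's inequality with a small parameter $\lambda$, the $(SC)$-scaling of $G^*$ via $(2.4)$--$(2.5)$ so that $\eta^{g^0-1}$ becomes $\eta^{g^0}$, and absorption) is exactly the expected route, with the exponent bookkeeping done correctly. One small caution about your parenthetical truncation remark: $\max(u,-k)$ need not be a $G(\cdot)$-supersolution (minima with constants are, maxima are not), so if a truncation were needed one should instead truncate the test function, taking $\varphi_k=\min(-u,k)\,\eta^{g^0}$ in the inequality for $u$ itself and letting $k\to\infty$ by monotone convergence; in the present setting this is moot, since Definition 3.2 already puts $u\in W^{1,G(\cdot)}(2B)$, so $\varphi=-u\,\eta^{g^0}$ is admissible directly (modulo the routine fact that compactly supported $W^{1,G(\cdot)}$-functions lie in $W^{1,G(\cdot)}_0(2B)$, a technicality equally present in the cited proof).
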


\begin{lm}
Let $G(\cdot) \in \Phi(B(x_0, 2r))$ satisfy $(SC)$, $(A_0)$ and $(A_{1,n})$. If $u$ is a nonnegative $G(\cdot)$-supersolution in $B(x_0, 2r)$, then, for some constant $C>0$, we have
$$C r g^{-1}\left( x_0,\frac{\mu[u](B(x_0, r))}{R^{n-1}} \right) \leq \essinf_{B(x_0, r)}u + R,$$
with $\mu[u]$ is the associated Radon measure to $u$ in $\left( W^{1, G(\cdot)}_0(B(x_0, 2r))\right)^*$.
\end{lm}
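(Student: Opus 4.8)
The plan is to prove the pointwise lower bound by a Wolff-type / De Giorgi iteration argument, following the classical Kilpeläinen--Malý scheme adapted to the $G(\cdot)$ setting. First I would localise: fix $R$ with $0 < R \le r$ (the statement is really about balls $B(x_0,R) \subset B(x_0,2r)$, with the final $R$ playing the role of a free radius $\le r$), and normalise by considering $w := u - \essinf_{B(x_0,r)} u \ge 0$ up to the usual subtleties, or more simply work with $u$ itself since $u \ge 0$. Set $m(s) := \essinf_{B(x_0,s)} u$ for $0 < s \le 2r$; this is nondecreasing in nothing in particular, but the key quantity is $m(R)$ on the left. The aim is to show that $u$ cannot be too small on $B(x_0,R)$ unless $\mu[u](B(x_0,R))$ is correspondingly small.

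The key steps, in order: (1) Apply the Caccioppoli estimate of the preceding Lemma to the nonpositive supersolution $-(u \wedge \lambda)$ (truncations of $u$), or rather to $v := \lambda - \min(u,\lambda)$ type test functions, on the ball $B(x_0,R)$ with a cutoff $\eta$ supported in $B(x_0,2R) \subset B(x_0,2r)$, $\eta = 1$ on $B(x_0,R)$, $|\nabla\eta| \le C/R$; this converts gradient energy on $B(x_0,R)$ into an $L^{G(\cdot)}$-integral of $u/R$ over $B(x_0,2R)$. (2) Test the supersolution inequality $\int \frac{g(x,|\nabla u|)}{|\nabla u|}\nabla u\cdot\nabla\varphi\,dx = \int \varphi\,d\mu[u]$ with $\varphi = \eta^{g^0}\min(u,\lambda)$ or a similar admissible test function, to produce on one side $\mu[u](B(x_0,R))$ (using $\eta = 1$ there) and on the other side gradient terms controlled by Young's inequality $(2.1)$ and the structure $(SC)$. (3) Combine (1) and (2) with the Sobolev--Poincaré inequality (Theorem $2.1$) on $B(x_0,R)$ to close a self-improving inequality of the form
$$
G^-_{B(x_0,2R)}\!\left(\frac{\essinf_{B(x_0,R)} u + R}{CR}\right) \gtrsim \frac{\mu[u](B(x_0,R))}{R^{n}} \cdot R,
$$
i.e.\ roughly $R\,g\big(x_0, \tfrac{\essinf u + R}{CR}\big) \gtrsim \mu[u](B(x_0,R))/R^{n-1}$; here the $+R$ arises exactly from the $|\{\nabla u \neq 0\}|$-term in the Poincaré inequality, which contributes a term of size $R^n$, and from the $(A_0)$-normalisation allowing one to absorb constants of order $G(x_0,1)$. (4) Invert $g(x_0,\cdot)$: since $g(x_0,\cdot)$ is nondecreasing and, by $(SC)$, doubling-like, applying $g^{-1}(x_0,\cdot)$ to both sides yields
$$
C r\, g^{-1}\!\left(x_0, \frac{\mu[u](B(x_0,r))}{R^{n-1}}\right) \le \essinf_{B(x_0,R)} u + R \le \essinf_{B(x_0,r)} u + R,
$$
where I used monotonicity of $\mu[u]$ in the ball and $R \le r$ to replace $B(x_0,R)$ by $B(x_0,r)$ inside the measure (this only makes the left side larger, which is the correct direction); the factor $r$ versus $R$ is absorbed using $(A_{1,n})$ together with Lemma $2.1$, which lets one compare $G$ at comparable scales/points when the argument lies in $[1, s/R']$, precisely the regime needed to move between $G^-_{B(x_0,2R)}$ and the value at $x_0$.

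The main obstacle I expect is Step (3)--(4): controlling the point dependence of $G$. The Caccioppoli and Poincaré estimates naturally produce $G^\pm_{B}$ (sup/inf over the ball) of the relevant quantities, but the conclusion is stated at the single point $x_0$ via $g^{-1}(x_0,\cdot)$. Bridging this gap is exactly where $(A_{1,n})$ and its flexible reformulation (Lemma $2.1$) enter: one must check that the arguments appearing — essentially $\mu[u](B(x_0,R))/R^{n-1}$ run through $g^{-1}$, and $(\essinf u + R)/(CR)$ — lie in an interval of the form $[r', s'/R]$ so that $G^-_{B}$, $G^+_{B}$ and $G(x_0,\cdot)$ are all comparable up to a multiplicative constant. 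A secondary technical point is the justification that the truncations $\min(u,\lambda)$ are legitimate test functions and that passing $\lambda \to \infty$ is harmless (standard, since $u \in W^{1,G(\cdot)}$ locally and $\mu[u]$ is a Radon measure of finite total mass on compact subsets), and that the term $|\{\nabla u \neq 0\} \cap B(x_0,R)| \le |B(x_0,R)| \le C R^n$ is exactly what generates the additive $R$ after inverting $g$. Modulo these points the argument is a routine, if careful, adaptation of the scalar Kilpeläinen--Malý potential estimate.
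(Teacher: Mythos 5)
Your overall skeleton — test the measure equation for the supersolution with a truncation of $u$ multiplied by a power $\eta^{g^0}$ of a cutoff, control the gradient terms with the Caccioppoli estimate (Lemma 6.3), the Young-type inequality $(2.1)$ and $(SC)$, then use $(A_{1,n})$ through Lemma 2.1 to replace $G^\pm_{B}$ by $G(x_0,\cdot)$ and finally invert $g(x_0,\cdot)$ via $(2.2)$--$(2.4)$ — is indeed the paper's argument (the paper works with a single radius, $R=r$). However, two of your specific choices are genuine missteps. First, the truncation level: you propose $\varphi=\eta^{g^0}\min(u,\lambda)$ and then ``passing $\lambda\to\infty$''. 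The proof only closes if you truncate exactly at $b=\essinf_{B(x_0,r)}u$: with $v=\min\{u,b\}+r$ and $\omega=v\eta^{g^0}$ one has $\omega\equiv b+r$ on $B(x_0,r)$, whence $(b+r)\mu[u](B(x_0,r))\le\int\omega\,\mathrm{d}\mu[u]$, while $\nabla v$ vanishes where $u>b$ and $0\le b+r-v\le b+r$, so the Caccioppoli lemma applied to the nonpositive supersolution $v-(b+r)$ bounds every gradient term by $Cr^nG^+\bigl(\frac{b+r}{r}\bigr)$. Sending $\lambda\to\infty$ destroys precisely this link between the right-hand side and $\essinf u$; nothing in your step (2)--(3) then produces the quantity $\essinf_{B}u+r$.

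Second, the Poincar\'e inequality (Theorem 2.1) plays no role in this lemma, and your attribution of the additive $R$ to its $|\{\nabla u\neq0\}|$-term is incorrect; it also points in the wrong direction for what is needed (Poincar\'e bounds the function by its gradient, whereas here one must bound the measure, i.e.\ the gradient pairing, by the truncation level — that is what Caccioppoli does). In the paper the ``$+r$'' is put in by hand, by working with $v=\min\{u,b\}+r$ rather than $\min\{u,b\}$: this both avoids degeneracy when $\essinf u$ is small and, crucially, guarantees $\frac{b+r}{r}\ge1$, which together with the embedding $L^{G(\cdot)}(B)\subset L^{g_0}(B)$ (giving an upper bound on $\frac{b+r}{r}$ in terms of $\|u\|_{g_0,B}$) places the argument in the window $[1,\frac{s}{r}]$ where Lemma 2.1/$(A_{1,n})$ yields $G^+\bigl(\frac{b+r}{r}\bigr)\le CG\bigl(x_0,\frac{b+r}{r}\bigr)$. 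You correctly identified this point-dependence comparison as the crux, but without the deliberate ``$+r$'' in the truncation your steps (3)--(4) have no guarantee that the argument lies in the admissible range, so the passage from $G^\pm_B$ to $g^{-1}(x_0,\cdot)$ does not go through as written.
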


\begin{proof}
We set $B = B(x_0, r)$, $b = \inf_{B} u$ and, $v = \min\{u,b\} + r$. Choose $\omega = v\eta^{g^0} $ such that $\eta \in C^\infty_0(2B)$ with $0 \leq \eta \leq 1$, $\eta = 1$ in $\overline{B}$ and $|\nabla \eta| \leq \displaystyle \frac{C}{r} $, we have
$$\begin{array}{ll}
(b+r)\mu[u](B) & \leq  \displaystyle\int_{2B}  \omega \, \mathrm{d}\mu[u] \\
& =  \displaystyle \int_{2B} \frac{g(x,|\nabla u|)}{|\nabla u|} \nabla u \cdot \nabla \omega \, \mathrm{d}x \\
&\leq  \displaystyle \int_{2B} \left(\frac{g(x,|\nabla u|)}{|\nabla u|} \nabla u \cdot \nabla v\right) \eta^{g^0} \, \mathrm{d}x + \displaystyle \int_{2B} \left( \frac{g(x,|\nabla u|)}{|\nabla u|} \nabla u \cdot \nabla \eta\right)\eta^{g^0-1}v \, \mathrm{d}x.\\
\end{array}$$
By the condition $(SC)$, we have
$$\begin{array}{ll}
I_1 & := \displaystyle \int_{2B} \left(\frac{g(x,|\nabla u|)}{|\nabla u|} \nabla u \cdot \nabla v\right)\eta^{g^0} \, \mathrm{d}x\\
& \leq g^0 \displaystyle \int_{2B} G(x, |\nabla v)|) \eta^{g^0} \, \mathrm{d}x
\end{array}$$
and
$$\begin{array}{ll}
I_2 & := \displaystyle \int_{2B} \left(\frac{g(x,|\nabla u|)}{|\nabla u|} \nabla u \cdot \nabla \eta\right)\eta^{g^0-1}v \, \mathrm{d}x\\
& \leq \displaystyle \int_{2B} g(x, |\nabla v|)|\nabla \eta|\eta^{g^0-1}v \, \mathrm{d}x.
\end{array}$$
As $v \leq b + r$ and $|\nabla \eta| < \displaystyle \frac{C}{r}$, we have
$$I_2 \leq C \displaystyle \frac{b+r}{r} \displaystyle \int_{2B} g(x, |\nabla v|)\eta^{g^0-1} \, \mathrm{d}x.$$
Using inequality $(2.1)$ for $a^\prime = |\nabla v|$ and $b^\prime = \displaystyle \frac{b+r}{\eta r} $, and the condition $(SC)$, we get
$$I_2 \leq C \left( \displaystyle \int_{2 B} G(x, |\nabla v|)\eta^{g^0} \, \mathrm{d}x + \displaystyle \int_{2B} G\left( x, \frac{b+r}{r}\right) \, \mathrm{d}x \right).$$
Collecting the previous estimations of $I_1$ and $I_2$, we obtain
$$(b+r)\mu[u](B) \leq C\left( \displaystyle \int_{2B} G(x, |\nabla v|)\eta^{g^0} \, \mathrm{d}x + \displaystyle \int_{2B} G\left( x, \frac{b+r}{r}\right) \, \mathrm{d}x \right).$$
Or, by Lemma $6.3$, we have
$$\displaystyle \int_{B} G(x,|\nabla( v-(b+R))|)\eta^{g^0} \, \mathrm{d}x \leq C \displaystyle \int_{2B} G^+\left(\frac{b+r-v}{r}\right) \, \mathrm{d}x.$$
Hence
$$ (b+r)\mu[u](B)\leq C \displaystyle \int_{2B} G^+\left(\frac{b+r}{r}\right) \, \mathrm{d}x.$$
Since $L^{G(\cdot)}(B) \subset L^{g_0}(B)$ (see $\cite{ref12}$), we have
$$1 \leq \displaystyle \frac{b + 1}{r} \leq \displaystyle \frac{\frac{\norme{u}_{g_0,B}}{|B|^{\frac{1}{g^0}}}+1}{r}.$$
Then, by Lemma  $2.1$, there exists a constant $C>0$ dependent of $\displaystyle\frac{\norme{u}_{g_0,B}}{|B|^{\frac{1}{g^0}}}$ such that
$$G^+\left(\frac{b+r}{r}\right) \leq C G\left( x_0,\frac{b+r}{r}\right). $$
Hence
$$ (b+r)\mu[u](B) \leq C r^{n} G\left( x_0,\frac{b+r}{r}\right).$$
So, by the condition $(SC)$, we have
$$ \mu[u](B) \leq C r^{n-1} g\left( x_0,\frac{b+r}{r}\right).$$
From inequalities $2.4$, $2.2$ and $2.3$, we have
$$C r g^{-1}\left( x_0,\frac{\mu[u](B)}{r^{n-1}} \right) \leq \inf_{B}u + r.$$
This concludes the proof.
\end{proof}

By a similar proof in $\cite{ref8}$, we have the following lemma. 

\begin{lm}
Let $G(\cdot) \in \Phi(\Omega)$ satisfies $(SC)$. Let $f \in W^{1, G(\cdot)}(\Omega)$ and $v$ be a $G(\cdot)$-supersolution in $\Omega$ such that $f-v \in W_0^{1, G(\cdot)}(\Omega)$. Then the solution of the obstacle problem with the obstacle $v$ and the boundary data $f$ is a $G(\cdot)$-solution in $\Omega$.
\end{lm}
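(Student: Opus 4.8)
The plan is to show that the upper obstacle $u\le v$ is inactive: since $v$ is a $G(\cdot)$-supersolution carrying the same boundary data as $f$, the free $G(\cdot)$-solution with data $f$ already lies below $v$, hence it is admissible and solves the obstacle problem, so by uniqueness it must coincide with the obstacle solution $u$. Throughout, abbreviate $\mathbf{a}(x,\nabla w):=\lfrac{g(x,|\nabla w|)}{|\nabla w|}\nabla w$.

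First I would produce, via Theorem $3.1$, the $G(\cdot)$-solution $h\in W^{1,G(\cdot)}(\Omega)$ to $(3.1)$ with $h-f\in W_0^{1,G(\cdot)}(\Omega)$, so that $\int_\Omega \mathbf{a}(x,\nabla h)\cdot\nabla\varphi\,\mathrm{d}x=0$ for every $\varphi\in W_0^{1,G(\cdot)}(\Omega)$. Since $f-v\in W_0^{1,G(\cdot)}(\Omega)$ and $h-f\in W_0^{1,G(\cdot)}(\Omega)$, adding gives $h-v\in W_0^{1,G(\cdot)}(\Omega)$, and therefore $(h-v)^+\in W_0^{1,G(\cdot)}(\Omega)$. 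This is precisely the statement that $h\le v$ on $\partial\Omega$ in the Sobolev sense.

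The decisive step is to promote this boundary ordering to an a.e.\ ordering in $\Omega$. Regarding $h$ as a $G(\cdot)$-subsolution and $v$ as a $G(\cdot)$-supersolution with $(h-v)^+\in W_0^{1,G(\cdot)}(\Omega)$, the weak comparison principle (Lemma $4.3$ in $\cite{ref5}$) yields $h\le v$ a.e.\ in $\Omega$. Hence $h\in\mathcal{K}_{v,f}(\Omega)$. Moreover, for any competitor $w\in\mathcal{K}_{v,f}(\Omega)$ one has $w-h=(w-f)-(h-f)\in W_0^{1,G(\cdot)}(\Omega)$, so testing the equation for $h$ with $\varphi=w-h$ gives $\int_\Omega \mathbf{a}(x,\nabla h)\cdot(\nabla h-\nabla w)\,\mathrm{d}x=0\ge0$; thus $h$ itself solves the obstacle problem in $\mathcal{K}_{v,f}(\Omega)$. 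Since $G(\cdot)$ is strictly convex and satisfies $(A_0)$, the solution of this obstacle problem is unique (Theorem $4.1$ in $\cite{ref5}$), so the obstacle solution $u$ equals $h$, and in particular $u$ is a $G(\cdot)$-solution to $(3.1)$ in $\Omega$.

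I expect the main obstacle to be the second step, namely passing from the ordered boundary values to the pointwise inequality $h\le v$ through the weak comparison principle; once this is in hand, the admissibility of $h$ and the identification $u=h$ are routine. If one prefers to avoid invoking uniqueness explicitly, the same conclusion follows energetically: $h$ minimizes $w\mapsto\int_\Omega G(x,|\nabla w|)\,\mathrm{d}x$ over all $w$ with $w-f\in W_0^{1,G(\cdot)}(\Omega)$, and since $h$ already belongs to the smaller admissible class $\mathcal{K}_{v,f}(\Omega)$, it is a fortiori the minimizer there; strict convexity of $G(\cdot)$ then forces $u=h$.
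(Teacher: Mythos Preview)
Your argument is correct and follows the standard route---build the free Dirichlet solution $h$ with boundary data $f$, use weak comparison against the supersolution $v$ to obtain $h\le v$, then identify $h$ with the obstacle solution by uniqueness---which is precisely the strategy in \cite{ref8} that the paper cites in lieu of writing out its own proof. The only point to flag is that your invocations of the weak comparison principle and of uniqueness for the obstacle problem rely on strict convexity of $G(\cdot)$ and on $(A_0)$ (cf.\ Theorem~3.1), hypotheses which are not listed in the lemma's statement but are standing assumptions in the surrounding results where the lemma is applied.
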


\begin{Th}
Let $x_0 \in \partial \Omega$. Let $ G(\cdot) \in \Phi(\R^n) \cap C^1(\R^+)$ be strictly convex and satisfy $(SC)$, $(A_0)$, $(A_1)$, and $(A_{1,n})$. Fix $r > 0$, and let $u = \mathcal{R}_{G(\cdot)}(\overline{B}(x_0,r) \backslash \Omega , B(x_0,4r))$ be the $G(\cdot)$-potential for $\overline{B}(x_0,r) \backslash \Omega$ with respect to $B(x_0,4r)$. Then for $0 < \rho \leq r$ and $x \in B(x_0,\rho)$, we have
$$1-u(x) \leq \exp\left(-C\int_{\rho}^r g^{-1}\left(x_0 , \frac{\text{cap}_{G(\cdot)}(\overline{B}(x_0,t)\cap \Omega^\complement, B(x_0,2t)}{t^{n-1}}\right) \, \mathrm{d}t + Cr\right).$$
\end{Th}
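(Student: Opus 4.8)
The plan is to derive a differential-inequality (discrete telescoping) estimate for the quantity $1-u$ on dyadic annuli and then iterate it. Fix $x_0 \in \partial\Omega$ and write $K = \overline{B}(x_0,r)\cap\Omega^\complement$, $u = \mathcal{R}_{G(\cdot)}(K,B(x_0,4r))$, so $0\le u\le 1$, $u\equiv 1$ on $K$, and $u$ is $G(\cdot)$-harmonic in $B(x_0,4r)\setminus K$. By Lemma 6.1 the function $u$ is a $G(\cdot)$-supersolution on all of $B(x_0,4r)$, hence $1-u$ is a nonnegative $G(\cdot)$-subsolution there; equivalently $w := 1-u$ is a nonnegative $G(\cdot)$-supersolution... more usefully, I would instead apply the estimates to the supersolution $u$ restricted to the dyadic balls $B(x_0,2^{-j}r)$. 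For each scale $t\in(\rho,r)$ set $a(t) := \inf_{B(x_0,t)} u$. The monotonicity of $u$ (from the comparison principle, since $u=1$ on $K\subset \overline{B}(x_0,r)$ and $u$ is $G(\cdot)$-harmonic outside) gives that $a(t)$ is nonincreasing in $t$, and by Corollary 4.1 / continuity $a(t)\to u(x)$-type control near $x_0$.

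The core step is the following scale-by-scale estimate: for $2t \le r$, apply Lemma 6.4 to the nonnegative $G(\cdot)$-supersolution $u$ on $B(x_0,2t)$ to get
$$ C\,t\, g^{-1}\!\left(x_0, \frac{\mu[u](B(x_0,t))}{t^{n-1}}\right) \le \essinf_{B(x_0,t)} u + t = a(t) + t, $$
and then bound $\mu[u](B(x_0,t))$ from below by the capacity. For the latter I would combine Theorem 6.1 (which gives $\mu[u](K') \gtrsim \textrm{cap}_{G(\cdot)}(K';\cdot)$ for the potential's own measure on its own condenser) with the comparison principle to transfer mass: since $u=1$ on $\overline{B}(x_0,t)\cap\Omega^\complement$ and $u$ dominates the potential $\mathcal{R}_{G(\cdot)}(\overline{B}(x_0,t)\cap\Omega^\complement, B(x_0,2t))$ on the smaller condenser, one gets
$$ \mu[u](B(x_0,t)) \ge c\,\textrm{cap}_{G(\cdot)}(\overline{B}(x_0,t)\cap\Omega^\complement, B(x_0,2t)). $$
Feeding this into the displayed inequality yields, after using $(SC)$ and monotonicity of $g^{-1}$,
$$ C\, t\, g^{-1}\!\left(x_0, \frac{\textrm{cap}_{G(\cdot)}(\overline{B}(x_0,t)\cap\Omega^\complement, B(x_0,2t))}{t^{n-1}}\right) \le a(t) + t. $$

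Next comes the iteration. Write $b(t) := 1 - a(t) = \sup_{B(x_0,t)}(1-u)$, which is nonincreasing as $t$ increases — wait, $a$ is nonincreasing so $b$ is nondecreasing in $t$; I want to go from scale $r$ down to scale $\rho$ and show $b$ shrinks. The right mechanism: on the annulus between scales $t$ and $2t$, a Caccioppoli/oscillation argument (Lemma 6.3) together with the weak Harnack-type bound gives a relation of the form $a(2t) - a(t) \ge \kappa_t\,(1 - a(2t))$ where $\kappa_t$ is comparable to $t\, g^{-1}(x_0, \textrm{cap}/t^{n-1})$ divided by the natural normalization — i.e. each dyadic step forces a fixed proportion (depending on the capacity density at that scale) of the remaining gap $1-u$ to be eaten. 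Rewriting, $1 - a(t) \le (1-\kappa_t)(1 - a(2t)) \le \exp(-\kappa_t)(1-a(2t))$, and telescoping over the dyadic scales $t = 2^{-k}r$ between $\rho$ and $r$ converts the sum $\sum_k \kappa_{2^{-k}r}$ into the integral $\int_\rho^r g^{-1}(x_0, \textrm{cap}_{G(\cdot)}(\overline{B}(x_0,t)\cap\Omega^\complement, B(x_0,2t))/t^{n-1})\,\mathrm{d}t$ (comparing the dyadic sum with the integral costs only a constant, using $(A_{1,n})$ to replace $g^{-1}(x_0,\cdot)$ comparisons across a fixed dilation). The extra additive $Cr$ in the exponent absorbs the boundary term $+t$ appearing above and the discrepancy at the top scale. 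Finally, for $x\in B(x_0,\rho)$ one has $1 - u(x) \le \sup_{B(x_0,\rho)}(1-u) = 1 - a(\rho)$, which is exactly the left-hand side of the claimed bound.

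The main obstacle I anticipate is making the per-scale gap-reduction inequality $1 - a(t) \le e^{-\kappa_t}(1-a(2t))$ rigorous with the correct constant $\kappa_t$: this requires combining Lemma 6.4's infimum estimate with a lower capacity bound for $\mu[u]$ on each sub-condenser (not just on the original one), and then converting "a definite amount of mass" into "a definite proportion of the oscillation is killed." The transfer of measure lower bounds from $\mu[u]$ to the capacity at every intermediate scale — rather than only at the top scale $r$ where Theorem 6.1 applies directly — is the delicate point; I expect to handle it by a comparison argument showing $u \ge \mathcal{R}_{G(\cdot)}(\overline{B}(x_0,t)\cap\Omega^\complement, B(x_0,2t))$ on $B(x_0,2t)$ (both equal $1$ on the obstacle, $u$ is a supersolution, the potential is the smallest such), and then invoking monotonicity of the Radon-measure map together with Theorem 6.1 applied to that smaller potential. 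A secondary technical nuisance is keeping track of whether $(A_{1,n})$ (via Lemma 2.1) genuinely lets us freeze the basepoint of $g^{-1}$ at $x_0$ uniformly across all dyadic scales down to $\rho$; this is where the hypotheses $(SC)$, $(A_0)$, $(A_1)$, $(A_{1,n})$ all get used, and it is the reason the statement is phrased with $g^{-1}(x_0,\cdot)$ rather than a variable basepoint.
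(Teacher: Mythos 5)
There is a genuine gap at the heart of your per-scale estimate. You propose to bound $\mu[u](B(x_0,t))$ from below by $c\,\text{cap}_{G(\cdot)}(\overline{B}(x_0,t)\cap\Omega^\complement;B(x_0,2t))$ by noting that $u$ dominates the smaller potential $\mathcal{R}_{G(\cdot)}(\overline{B}(x_0,t)\cap\Omega^\complement,B(x_0,2t))$ and then "invoking monotonicity of the Radon-measure map together with Theorem 6.1." But the map $v\mapsto\mu[v]$ is not monotone under pointwise domination: the constant function $1$ dominates every potential bounded by $1$ and carries zero measure, and more to the point, for $u$ itself the measure $\mu[u]$ is the equilibrium-type measure of the \emph{large} set $\overline{B}(x_0,r)\setminus\Omega$, whose restriction to $B(x_0,t)$ can be far smaller than the capacity of the small condenser (already in the linear case $G(x,s)=s^2$, $n\geq 3$, with $\Omega^\complement$ a flat plate through $x_0$: the restricted equilibrium measure scales like $t^{n-1}$ while the capacity of the sub-plate scales like $t^{n-2}$). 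So Theorem 6.1 cannot be transferred to intermediate scales this way, and the inequality you feed into Lemma 6.4 is unjustified (and in general false). A second, related gap: even granting a capacity lower bound for $\mu[u]$ at scale $t$, Lemma 6.4 only yields the \emph{additive} bound $\inf_{B(x_0,t)}u\geq C t\,g^{-1}(x_0,\text{cap}/t^{n-1})-t$; it does not by itself produce the \emph{proportional} gap reduction $1-a(t)\leq e^{-\kappa_t}(1-a(2t))$ that your telescoping needs, and the Caccioppoli/weak-Harnack argument you gesture at does not supply the factor $(1-a(2t))$.

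The paper closes exactly this point by renormalizing at every scale rather than reusing $u$'s own measure. With $r_j=4^{1-j}r$, $B_j=B(x_0,r_j)$ and $m_j$ the current infimum, it solves an obstacle problem in $D_j=B_j\setminus(\overline{B}_{j+1}\cap\Omega^\complement)$ with obstacle $u_{j-1}$ and boundary data $f_j$ equal to $m_j$ on $\partial B_j$ and $1$ on $\overline{B}_{j+1}$; the function $(u_j-m_j)/(1-m_j)$ is then \emph{precisely} the $G(\cdot)$-potential of the condenser $(\overline{B}_{j+1}\cap\Omega^\complement,B_j)$, so Lemma 6.4 and Theorem 6.1 apply verbatim to it and its own measure, giving the multiplicative estimate $1-m_{j+1}\leq(1-m_j)\exp(-Cr_j g^{-1}(x_0,\text{cap}(\overline{B}_{j+1}\cap\Omega^\complement;B_j)/r_j^{n-1})+r_j/2)$ directly. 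The comparison principle is used only in the safe direction, $u\geq u_1\geq u_2\geq\cdots$ on the shrinking balls, to pass the resulting lower bounds back to $u$; the remaining steps of your outline (exponentiating via $1+s\leq e^s$, comparing the dyadic sum with the integral using Proposition 2.1 and $(A_0)$, absorbing the error terms $r_j/2$ and $g^{-1}(x_0,1)$ into $Cr$) do match the paper. To repair your argument you would need to import this scale-by-scale construction (or an equivalent renormalization), since the shortcut through $\mu[u]$ alone cannot work.
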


\begin{proof}
 Let $x_0 \in \partial \Omega$, $r > 0$, and $B_j = B(x_0,r_j)$ where $r_j = 4^{1-j}r, \; j= 0, 1, 2, ...$. Let $u$ be the ${G(\cdot)}$-potential for $\overline{B_1} \cap \Omega^\complement$ with respect to $B_0$. By Lemma $6.4$, we have
$$\begin{array}{ll} 
m_1:=  \inf_{\frac{1}{2} B_0} u & \geq C\displaystyle\lfrac{r_0}{2} g^{-1}\left( x_0,\lfrac{\mu[u](\frac{1}{2}B_0)}{\left(\lfrac{r_0}{2}\right)^{n-1}}\right) - \displaystyle\lfrac{r_0}{2}\\
& \geq Cr_0 g^{-1}\left( x_0,\lfrac{\mu[u](\overline{B_1} \cap \Omega^\complement)}{r_0^{n-1}}\right) - \displaystyle\lfrac{r_0}{2}.
\end{array}$$
Using Theorem $6.1$, we get
\begin{equation}
m_1 \geq  Cr_0g^{-1}\left( x_0,\displaystyle \frac{\text{cap}_{G(\cdot)}(\overline{B_1} \cap \Omega^\complement; B_0)}{r_0^{n-1}}\right) - \displaystyle\lfrac{r_0}{2}.
\end{equation}
As $1+t \leq e^t$, then
\begin{equation}
\begin{array}{ll}
1-m_1 & \leq 1 - Cr_0g^{-1}\left( x_0,\displaystyle \frac{\text{cap}_{G(\cdot)}(\overline{B_1} \cap \Omega^\complement; B_0)}{r_0^{n-1}}\right) + \displaystyle\lfrac{r_0}{2} \\
& \leq \exp\left( - Cr_0g^{-1}\left( x_0,\displaystyle \frac{\text{cap}_{G(\cdot)}(\overline{B_1} \cap \Omega^\complement; B_0)}{r_0^{n-1}}\right) + \displaystyle\lfrac{r_0}{2}\right).
\end{array}
\end{equation}
Next, let $D_1 = B_1 \backslash (\overline{B_2} \cap \Omega^\complement)$ and let $f_1 \in W^{1, G(\cdot)}_0(B_0)$ such that $f_1= m_1$ on $\partial B_1$ and $f_1 = 1$ on $\overline{B_2}$.
Let $u_1$ be the solution of the obstacle problem in $D_1$ with the upper obstacle $u$ and the boundary values $f_1$ extend to $\overline{B_2} \cap \Omega^\complement$ by the constant $1$. Then $\displaystyle \frac{u_1 - m_1}{1 - m_1}$ is the ${G(\cdot)}$-potential for $\overline{B}_2 \cap \Omega^\complement$ with respect to $ B_1$. So, by inequality $(6.2)$, we have
$$ \inf_{\frac{1}{2} B_1}  \frac{u_1 - m_1}{1 - m_1} \geq Cr_1g^{-1}\left( x_0,\frac{\text{cap}_{G(\cdot)}(\overline{B_2} \cap \Omega^\complement; B_1)}{r_1^{n-1}}\right) - \frac{r_1}{2}.$$
Hence
$$m_2 := \inf_{\frac{1}{2} B_1} u_1 \geq Cr_1(1-m_1)g^{-1}\left( x_0,\frac{\text{cap}_{G(\cdot)}(\overline{B_2} \cap \Omega^\complement; B_1)}{r_1^{n-1}}\right) - \frac{r_1}{2}(1-m_1) + m_1.$$
Consequently
$$\begin{array}{ll}
1-m_2 & \leq  -  Cr_1(1-m_1)g^{-1}\left( x_0,\displaystyle \frac{\text{cap}_{G(\cdot)}(\overline{B_2} \cap \Omega^\complement; B_1)}{r_1^{n-1}}\right) + (1 + \displaystyle\lfrac{r_1}{2})(1 - m_1)\\
& \leq (1-m_1)\left( 1 -  Cr_1(1-m_1)g^{-1}\left( x_0,\displaystyle \frac{\text{cap}_{G(\cdot)}(\overline{B_2} \cap \Omega^\complement; B_1)}{r_1^{n-1}}\right) + \displaystyle\lfrac{r_1}{2}\right).
\end{array}$$
Then 
$$1-m_2 \leq  (1-m_1)\exp \left(- Cr_1g^{-1}\left( x_0,\frac{\text{cap}_{G(\cdot)}(\overline{B_2} \cap \Omega^\complement; B_1)}{r_1^{n-1}}\right) +  \displaystyle\lfrac{r_1}{2} \right).$$
A similar method, let $D_j = B_j \backslash (\overline{B_{j+1}} \cap \Omega^\complement)$ and let $f_j \in W^{1, G(\cdot)}_0(B_{j-1})$ such that  $f_j = m_j$ on $\partial B_j$ and $f_j = 1$ on $\overline{B_{j+1}}$.
$$f_j = \left \{
   \begin{array}{r c l}
    m_j & \; \text{on} & \partial B_j\\
    1 & \; \text{on} & \overline{B}_{j+1}. 
   \end{array}
   \right.$$
Let $u_j$ be the solution of the obstacle problem in $D_j$ with the upper obstacle $u_{j-1}$ and the boundary values $f_j$ extend to $\overline{B_{j+1}} \cap \Omega^\complement$ by the constant $1$. Then we have
$$1-m_{j+1} \leq  (1-m_j)\exp \left( - Cr_jg^{-1}\left( x_0,\frac{\text{cap}_{G(\cdot)}(\overline{B}_{j+1} \cap \Omega^\complement; B_j)}{r_j^{n-1}}\right) +  \displaystyle\lfrac{r_j}{2} \right).$$
with $m_{j+1} := \inf_{\frac{1}{2} B_j} u_j$. Iterating this inequality and using inequality $(6.3)$, we get for $k = 1,2, ... ,$
$$1-m_{k+1} \leq  \exp \left( -C\sum^k_{j=0} r_jg^{-1}\left( x_0,\frac{\text{cap}_{G(\cdot)}(\overline{B}_{j+1} \cap \Omega^\complement; B_j)}{r_j^{n-1}}\right) +  \sum^k_{j=0} \displaystyle\lfrac{r_j}{2} \right).$$ 
As $u \geq u_1$ and $u_j \geq u_{j+1}$in $B_{j+1}; \; j= 1,2, ... ,$, then
\begin{equation}
1-u \leq  \exp \left( -C\sum^k_{j=0} r_jg^{-1}\left( x_0,\frac{\text{cap}_{G(\cdot)}(\overline{B}_{j+1} \cap \Omega^\complement; B_j)}{r_j^{n-1}}\right) +  \sum^k_{j=0} \displaystyle\lfrac{r_j}{2} \right) \; \text{on} \; \frac{1}{2}\overline{B}_k.
\end{equation}
Fix $\rho > 0$ so that $\rho \leq r$ and choose an integer $k$ so that $r_{k+3} < \rho \leq r_{k+2}$, we have
$$\displaystyle\sum^k_{j=0} r_jg^{-1}\left( x_0, \displaystyle\frac{\text{cap}_{G(\cdot)}(\overline{B}_{j+1} \cap \Omega^\complement; B_j)}{r_j^{n-1}}\right) \geq C \displaystyle \sum^k_{j=0} \displaystyle \int_{r_{j+2}}^{r_{j+1}} g^{-1}\left( x_0,\displaystyle\frac{\text{cap}_{G(\cdot)}(\overline{B}_{j+1} \cap \Omega^\complement; B_j)}{t^{n-1}}\right) \, \mathrm{d}t.$$
Or using $r_{j+2} \leq t \leq r_{j+1}$ and Proposition $2.1$, we get
$$\begin{array}{ll}
\text{cap}_{G(\cdot)}(\overline{B}(x_0,t) \cap \Omega^\complement ; B(x_0,2t)) & \leq C\left(\text{cap}_{G(\cdot)}(\overline{B}(x_0,t) \cap \Omega^\complement; B(x_0,4t)) + t^n\right)\\
& \leq C\left(\text{cap}_{G(\cdot)}(\overline{B}(x_0,t) \cap \Omega^\complement; B(x_0,8t)) + t^{n}\right)\\
& \leq C\left(\text{cap}_{G(\cdot)}(\overline{B}_{j+1} \cap \Omega^\complement; B_{j}) + t^{n}\right).
\end{array}$$  
Then, we have
$$\begin{array}{ll}
 & \displaystyle \int_{r_{j+2}}^{r_{j+1}} g^{-1}\left( x_0,\displaystyle\frac{\text{cap}_{G(\cdot)}(\overline{B}_{j+1} \cap \Omega^\complement; B_j)}{t^{n-1}}\right) \, \mathrm{d}t\\
& = \displaystyle \int_{r_{j+2}}^{r_{j+1}} g^{-1}\left( x_0,\displaystyle\frac{\text{cap}_{G(\cdot)}(\overline{B}_{j+1} \cap \Omega^\complement; B_j) + t^{n-1}}{t^{n-1}} -1 \right) \, \mathrm{d}t\\
& \geq  C\displaystyle \int_{r_{j+2}}^{r_{j+1}} g^{-1}\left( x_0,\displaystyle\frac{\text{cap}_{G(\cdot)}(\overline{B}_{j+1} \cap \Omega^\complement; B_j) + t^{n-1}}{t^{n-1}}\right) \, \mathrm{d}t - \int_{r_{j+2}}^{r_{j+1}} g^{-1}(x_0,1) \, \mathrm{d}t \\
& \geq  C\displaystyle \int_{r_{j+2}}^{r_{j+1}} g^{-1}\left( x_0,\displaystyle\frac{\text{cap}_{G(\cdot)}(\overline{B}_{j+1} \cap \Omega^\complement; B_j) + t^{n}}{t^{n-1}}\right) \, \mathrm{d}t - \int_{r_{j+2}}^{r_{j+1}} g^{-1}(x_0,1) \, \mathrm{d}t \\
& \geq  C \displaystyle \int_{r_{j+2}}^{r_{j+1}} g^{-1}\left( x_0,\displaystyle\frac{\text{cap}_{G(\cdot)}(\overline{B}(x_0,t) \cap \Omega^\complement; \overline{B}(x_0,2t))}{t^{n-1}}\right) \, \mathrm{d}t - \int_{r_{j+2}}^{r_{j+1}} g^{-1}(x_0,1) \, \mathrm{d}t.
\end{array}$$
Hence,  by the condition $(A_0)$, we obtain
$$\begin{array}{ll}
\displaystyle\sum^k_{j=0} r_jg^{-1}\left( x_0, \displaystyle\frac{\text{cap}_{G(\cdot)}(\overline{B}_{j+1} \cap \Omega^\complement; B_j)}{t^{n-1}}\right) \geq C\displaystyle \int_{\rho}^{r} g^{-1}\left( x_0,\displaystyle\frac{\text{cap}_{G(\cdot)}(\overline{B}(x_0,t) \cap \Omega^\complement; \overline{B}(x_0,2t))}{t^{n-1}}\right) \, \mathrm{d}t - Cr.
\end{array}$$
Then, for $x \in B(x_0,\rho)$, we get
$$ 1-u(x) \leq \exp \left(-C\displaystyle \int_{\rho}^{r} g^{-1}\left( x_0,\displaystyle\frac{\text{cap}_{G(\cdot)}(\overline{B}(x_0,t) \cap \Omega^\complement; B(x_0,2t))}{t^{n-1}}\right) \, \mathrm{d}t + Cr\right).$$
This concludes the proof.
\end{proof}

\begin{Th}
Let $x_0 \in \partial \Omega$. Let $ G(\cdot) \in \Phi(\R^n) \cap C^1(\R^+)$ be strictly convex and satisfy $(SC)$, $(A_0)$, $(A_1)$, and $(A_{1,n})$. Fix $r > 0$ and let $u$ be the $G(\cdot)$-potential for $\overline{B}(x_0,r) \backslash \Omega$ with respect to $B(x_0,4r)$. Then
$$\liminf_{x \to x_0} u(x) \leq  C \left(\displaystyle \int_{0}^{4r} g^{-1}\left( x_0, \displaystyle \frac{\text{cap}_{G(\cdot)}(\overline{B}(x_0,t) \cap \Omega^\complement; B(x_0,2t))}{t^{n-1}} \right) \, \mathrm{d}t + r\right).$$ 
\end{Th}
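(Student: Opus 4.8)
The plan is to exhibit $u$ as a nonnegative $G(\cdot)$-supersolution on $B_0:=B(x_0,4r)$, bound it at its pole $x_0$ by its Wolff potential, and replace the latter by the capacitary Wiener integral of the statement. Concretely, write $K_0:=\overline{B}(x_0,r)\setminus\Omega=\overline{B}(x_0,r)\cap\Omega^{\complement}$, so that $u=\mathcal R_{G(\cdot)}(K_0,B_0)$. By Lemma $6.1$, $u$ is a nonnegative $G(\cdot)$-supersolution in $B_0$, with associated Radon measure $\mu:=\mu[u]$ (Lemma $6.2$); since $u$ is $G(\cdot)$-harmonic in $B_0\setminus K_0$, the measure $\mu$ is carried by $K_0\subset\overline{B}(x_0,r)$. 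Let $u^{*}$ be the lower semicontinuous regularization of $u$ (a $G(\cdot)$-superharmonic function with the same Riesz measure $\mu$). Since $0\le u\le1$ and $u\equiv1$ on the closed set $K_0\ni x_0$ (note $x_0\in\partial\Omega\subset K_0$), every sequence tending to $x_0$ through $K_0$ carries the value $1$, so the lower limit is realized through $\Omega$ and $\liminf_{x\to x_0}u(x)=u^{*}(x_0)$. It thus suffices to estimate $u^{*}(x_0)$; write $\mathbf W(x_0,s):=\int_{0}^{s}g^{-1}\!\bigl(x_0,\,\mu(B(x_0,t))/t^{n-1}\bigr)\,\mathrm{d}t$.

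\textbf{Pointwise Wolff bound.} Apply the pointwise Wolff estimate of \cite{ref6} to the $G(\cdot)$-potential $u$ at its pole $x_0$. For a capacitary potential this takes the ``boundary'' form, without an infimum term: the vanishing of $u$ on $\partial B_0$, the bound $u\le1$, and $\operatorname{supp}\mu\subset\overline{B}(x_0,r)$ are what remove the infimum term present in the Wolff estimate for a generic supersolution, and in the present normalization one gets
$$u^{*}(x_0)\ \le\ C\,\mathbf W(x_0,4r)+Cr.$$

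\textbf{From the Riesz measure to the capacity, and conclusion.} With $r_j:=4^{1-j}r$, the monotonicity of $t\mapsto\mu(B(x_0,t))$ makes $\mathbf W(x_0,4r)$ comparable to the dyadic sum $\sum_{j}r_j\,g^{-1}\!\bigl(x_0,\,\mu(B(x_0,r_j))/r_j^{n-1}\bigr)$, whose summands are exactly the pieces handled in the proof of Theorem $6.2$. A routine argument based on the energy-minimality of the capacitary potential $u$ (it minimizes $\int_{B_0}G(x,|\nabla\cdot|)$ over $S_{G(\cdot)}(K_0;B_0)$), Caccioppoli-type estimates in the spirit of Lemma $6.3$, and the equivalence $\mu[\mathcal R_{G(\cdot)}]\approx\text{cap}_{G(\cdot)}$ of Theorem $6.1$ yields
$$\mu(B(x_0,t))\ \le\ C\,\text{cap}_{G(\cdot)}\bigl(\overline{B}(x_0,t)\cap\Omega^{\complement};\,B(x_0,2t)\bigr)+Ct^{n},\qquad 0<t\le r.$$
Inserting this and integrating is then word-for-word the concluding computation in the proof of Theorem $6.2$: one writes $\text{cap}/t^{n-1}=(\text{cap}+t^{n-1})/t^{n-1}-1$, uses the elementary inequality $g^{-1}(x_0,a-1)\ge Cg^{-1}(x_0,a)-g^{-1}(x_0,1)$, absorbs the $g^{-1}(x_0,1)$ remainder into a $Cr$ term via $(A_0)$, replaces $t^{n}$ by $t^{n-1}$ inside $g^{-1}$ (legitimate since $t\le r$), and applies Proposition $2.1$(iii) to pass between the capacities taken with respect to $B(x_0,2t)$, $B(x_0,4t)$ and $B(x_0,8t)$. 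This gives
$$\mathbf W(x_0,4r)\ \le\ C\!\int_{0}^{4r}\!g^{-1}\!\Bigl(x_0,\,\tfrac{\text{cap}_{G(\cdot)}(\overline{B}(x_0,t)\cap\Omega^{\complement};\,B(x_0,2t))}{t^{n-1}}\Bigr)\mathrm{d}t+Cr,$$
and combining it with the bound of the previous step completes the proof.

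\textbf{Expected main obstacle.} The real content is concentrated in the pointwise Wolff bound --- controlling the $G(\cdot)$-potential at its pole by the capacitary Wiener integral with no leftover infimum term. This relies on the weak Harnack inequality for $G(\cdot)$-supersolutions, Caccioppoli estimates of the type of Lemma $6.3$, and the measure--capacity equivalence of Theorem $6.1$, while the flexible characterization of $(A_{1,n})$ in Lemma $2.1$ is what keeps the constants inside $g^{-1}(x_0,\cdot)$ scale-invariant across the dyadic annuli. By contrast, the passage from the dyadic sum to the integral merely reuses, verbatim, the bookkeeping already carried out for Theorem $6.2$ and poses no new difficulty.
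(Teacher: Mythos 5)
Your overall architecture (pointwise Wolff estimate for the potential, measure-to-capacity conversion, then the dyadic bookkeeping already used for Theorem 6.2) matches the paper's, but there is a genuine gap at the step you yourself flag as the real content: the ``boundary form'' of the Wolff estimate with no infimum term. The estimate actually available (Theorem 5.12 of \cite{ref6}, combined with Theorem 4.4 of \cite{ref13}, which is what the paper invokes) is for a general nonnegative $G(\cdot)$-supersolution and reads
$$\liminf_{x\to x_0}u(x)\ \le\ C\Bigl(r+\inf_{B(x_0,2r)}u+\int_0^{4r}g^{-1}\bigl(x_0,\mu[u](B(x_0,t))/t^{n-1}\bigr)\,\mathrm{d}t\Bigr),$$
and the infimum term cannot be discarded merely because $u$ vanishes on $\partial B(x_0,4r)$, $u\le 1$, and $\operatorname{supp}\mu[u]\subset\overline{B}(x_0,r)$: for a capacitary potential, $\inf_{B(x_0,2r)}u$ is genuinely of size about $r\,g^{-1}\bigl(x_0,\text{cap}_{G(\cdot)}(\overline{B}(x_0,r)\setminus\Omega;B(x_0,4r))/r^{n-1}\bigr)$, and one must \emph{prove} that this quantity is dominated by the capacitary Wiener integral plus $Cr$. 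That proof is the entire second half of the paper's argument: setting $\lambda=\inf_{B(x_0,2r)}u$, one derives $r^{n-1}g(x_0,\lambda/r)\le C\bigl(\int_{B(x_0,4r)}G(x,|\nabla u|)\,\mathrm{d}x+r^n\bigr)$ via Lemma 2.1, the Poincar\'e inequality and $(A_0)$, then bounds the energy by $\mu[u](B(x_0,4r))$ using Lemma 6.2 with $\varphi=u$, converts to capacity by Theorem 6.1, and inverts $g$ via inequalities (2.2)--(2.4) to get $\lambda\le C\bigl(\int_r^{2r}g^{-1}(x_0,\text{cap}_{G(\cdot)}(\overline{B}(x_0,t)\cap\Omega^\complement;B(x_0,2t))/t^{n-1})\,\mathrm{d}t+r^2\bigr)$. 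Naming the tools (weak Harnack, Caccioppoli, Theorem 6.1) in your closing paragraph does not substitute for this chain; as written, you have assumed the hardest step.

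Two smaller points. First, your identification $\liminf_{x\to x_0}u(x)=u^*(x_0)$ is harmless (it holds because $u(x_0)=1\ge\liminf$), but the justification ``the lower limit is realized through $\Omega$'' is not what makes it true and is not needed. Second, your measure-to-capacity inequality $\mu[u](B(x_0,t))\le C\,\text{cap}_{G(\cdot)}(\overline{B}(x_0,t)\cap\Omega^\complement;B(x_0,2t))+Ct^n$ is declared ``routine,'' whereas the paper obtains it by a specific device: restricting $\mu[u]$ to $B(x_0,t)$, comparing the associated supersolution $u_t$ with $u$ (testing with $(u_t-u)_+$) to get $u_t\le u\le 1$, and only then applying Theorem 6.1 and Proposition 2.1; energy minimality of $u$ over $S_{G(\cdot)}(K_0;B_0)$ alone does not control the measure of the small ball $B(x_0,t)$ at every scale $t\le r$, so this step also needs the actual argument rather than a gesture toward it.
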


\begin{proof}
Let $u$ be the $G(\cdot)$-potential for $\overline{B}(x_0,r) \backslash \Omega$ with respect to $B(x_0,4r)$. Then by the Wolff potential upper estimate Theorem $5.12$ in $\cite{ref6}$ and Theorem $4.4$ in $\cite{ref13}$, we have
$$\lim_{\rho \to 0} \inf_{\Omega \cap B(x_0,\rho)}u(x) \leq C\left( r + \inf_{B(x_0,2r)}u + \displaystyle \int_0^{4r} g^{-1}\left( x_0,\frac{\mu[u](B(x_0,t)}{t^{n-1}}\right) \, \mathrm{d}t \right).$$
Next, let  $0 < t \leq 4r$, $B = B(x_0,r)$, $\mu_t$ be the restriction of $\mu[u]$ to $B(x_0,t)$ and $u_t \in W^{1, G(\cdot)}_0(4B) $ be the ${G(\cdot)}$-supersolution in $4B$ associated with $\mu_t$. So we have 
$$\int_{4B} \frac{g(x,|\nabla u_t|)}{|\nabla u_t|}\nabla u_t \cdot \nabla \varphi \, \mathrm{d}x = \int_{4B} \varphi \, \mathrm{d}\mu_t \; \; \text{far all} \; \varphi \in W^{1, G(\cdot)}_0(4B).$$
As 
$$\int_{4B} \frac{g(x,|\nabla u|)}{|\nabla u|}\nabla u \cdot \nabla \varphi \, \mathrm{d}x = \int_{4B} \varphi \, \mathrm{d}\mu[u] \; \; \text{far all} \; \varphi \in W^{1, G(\cdot)}_0(4B).$$
Choosing $\varphi = (u_t-u)_+$ as a test function in the two previous inequalities, then
$$\int_{2B} \left(\left(\frac{g(x,|\nabla u_t|)}{|\nabla u_t|}\nabla u_t - \frac{g(x,|\nabla u|)}{|\nabla u|}\nabla u\right) \cdot (\nabla u_t - \nabla u)\right) \, \mathrm{d}x = 0.$$
Hence $\nabla(u_t-u)=0$ a.e in $4B$, then $u_t \leq u \leq 1$ a.e in $4B$. So, by Theorem $6.1$ and Proposition $2.1$, we have
$$\mu_t(B(x_0,t)) \leq C\text{cap}_{G(\cdot)}(\overline{B}(x_0,t) \cap \Omega^\complement; 4B) \leq C\text{cap}_{G(\cdot)}(\overline{B}(x_0,2t) \cap \Omega^\complement; B(x_0,4t)).$$
Let $\lambda = \inf_{2B}u$ and  $B(y,\lfrac{r}{4}) \subset B \cap \Omega^{\complement}$, so by the condition $(SC)$, we get
$$\begin{array}{ll}
r^{n-1} g\left( x_0, \displaystyle \frac{\lambda}{r}\right) & \displaystyle \leq C\left|B\left( y,\frac{r}{4}\right)\right| G\left( x_0,\lfrac{1}{r}\right)\\
& \leq C  \displaystyle \int_{B(y,\frac{r}{4})} G\left( x_0,\frac{u}{r}\right) \, \mathrm{d}x\\
& \leq C  \displaystyle \int_{4B} G\left( x_0,\frac{u+r}{r}\right) \, \mathrm{d}x\\
\end{array}$$

As $1 \leq \displaystyle \frac{u+r}{r} \leq \displaystyle \frac{2}{r}$ then, by Lemma $2.1$, we have
$$\displaystyle \int_{4B} G\left( x_0,\frac{u+r}{r}\right) \, \mathrm{d}x \leq \displaystyle \int_{4B} G\left(x,\frac{u+r}{r}\right) \, \mathrm{d}x $$
Then, using the Poincaré inequality and the condition $(A_0)$, we obtain
$$\begin{array}{ll}
r^{n-1} g\left( x_0, \displaystyle \frac{\lambda}{r}\right) & \leq C \left( \displaystyle \int_{4B} G\left( x,\frac{u}{r}\right) \, \mathrm{d}x + \displaystyle \int_{4B} G\left( x,1\right) \, \mathrm{d}x \right)\\
& \leq C  \left(\displaystyle \int_{4B} G(x,|\nabla u|) \, \mathrm{d}x + r^n + G(x_0,1)|4B| \right)\\
& \leq  C  \left(\displaystyle \int_{4B} G(x,|\nabla u|) \, \mathrm{d}x + r^{n} \right).
\end{array}$$
Or, from Lemma $6.2$, if we choose $\varphi = u$, we obtain
$$\displaystyle \int_{4B} G(x,|\nabla u|) \, \mathrm{d}x \leq C \int_{4B} \frac{g(x,|\nabla u|)}{|\nabla u|} \nabla u \cdot \nabla u \, \mathrm{d}x = C \int_{4B} u \, \mathrm{d}\mu[u] \leq C\mu[u](4B).$$
Then
$$r^{n-1} g\left( x_0, \displaystyle \frac{\lambda}{r}\right) \leq C \left( \mu[u](4B) + r^n\right). $$
From Theorem $6.1$, we have 
$$r^{n-1} g(x_0, \displaystyle \frac{\lambda}{r}) \leq C\left(\text{cap}_{G(\cdot)}(\overline{B}(x_0,r) \backslash \Omega ; 4B) + r^{n}\right).$$
Using inequalities $2.4$, $2.2$ and $2.3$, we get
$$ \lambda \leq C\left(rg^{-1}\left( x_0,\frac{\text{cap}_{G(\cdot)}(\overline{B}(x_0,r) \backslash \Omega ; 4B)}{r^{n-1}}\right) + r^2 \right).$$
Therefore
$$ \inf_{2B}u \leq C  \left( \displaystyle \int_{r}^{2r} g^{-1}\left( x_0,\displaystyle\frac{\text{cap}_{G(\cdot)}(\overline{B}(x_0,t) \cap \Omega^\complement; B(x_0,2t))}{t^{n-1}} \right)\, \mathrm{d}t + r^2 \right).$$
Hence
$$\liminf_{x \to x_0} u(x) \leq  C \left(\displaystyle \int_{0}^{4r} g^{-1}\left( x_0,\displaystyle\frac{\text{cap}_{G(\cdot)}(\overline{B}(x_0,t) \cap \Omega^\complement; B(x_0,2t))}{t^{n-1}} \right) \, \mathrm{d}t + r\right).$$
This concludes the proof.
\end{proof}

\section{Criterion Wiener}

First of all, the notion of the regularity of boundary points is defined in connection with Perron $G(\cdot)$-solutions.

\begin{df}
Let $G(\cdot) \in \Phi(\Omega)$. A boundary point $x_0$ of an open set $\Omega$ is called $G(\cdot)$-regular if 
$$\lim_{x\to x_0}\overline{H}_f(x) = f(x_0)$$
for each continuous $f : \partial \Omega \to \R$.
\end{df}

The following lemma shows that $G(\cdot)$-regularity is a local property.

\begin{lm}
Let $G(\cdot) \in \Phi(\Omega)$. A boundary $x_0$ of $\Omega$ is $G(\cdot)$-regular if and only if 
$$\lim_{x\to x_0}\overline{H}_f(x) = f(x_0)$$
for each bounded $f : \partial \Omega \to \R$, continuous at $x_0$. 
\end{lm}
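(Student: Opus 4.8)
The plan is to prove the two implications separately; essentially all the content lies in the forward direction, since the reverse is immediate. For the ``if'' direction, suppose the stated convergence holds for every bounded $f:\partial\Omega\to\R$ that is continuous at $x_0$. Because $\Omega$ is bounded, $\partial\Omega$ is compact, so any $f\in C(\partial\Omega)$ is automatically bounded and, being continuous everywhere, is in particular continuous at $x_0$. Hence the hypothesis applies to every such $f$, which is exactly Definition 7.1 of $G(\cdot)$-regularity. No further argument is needed here.

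For the ``only if'' direction, assume $x_0$ is $G(\cdot)$-regular and fix a bounded $f:\partial\Omega\to\R$ continuous at $x_0$. Using part 4) of Lemma 5.1, namely $\overline{H}_{f+\lambda}=\overline{H}_f+\lambda$, I may subtract the constant $f(x_0)$ and assume $f(x_0)=0$, so that the goal becomes $\lim_{x\to x_0}\overline{H}_f(x)=0$. Set $M:=\sup_{\partial\Omega}|f|<\infty$ and fix $\varepsilon>0$. By continuity of $f$ at $x_0$ there is $\delta>0$ with $|f|\le\varepsilon$ on $\partial\Omega\cap B(x_0,\delta)$. The key construction is the continuous ``tent'' function
$$\psi(\xi):=\min\!\left(2M,\ \frac{2M}{\delta}\,|\xi-x_0|\right),$$
which satisfies $\psi\ge 0$, $\psi(x_0)=0$, and $\psi\equiv 2M$ on $\partial\Omega\setminus B(x_0,\delta)$. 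Putting $g^{+}:=\varepsilon+\psi$ and $g^{-}:=-\varepsilon-\psi$, one checks directly that $g^{-}\le f\le g^{+}$ on all of $\partial\Omega$: inside $B(x_0,\delta)$ because $|f|\le\varepsilon$ and $\psi\ge 0$, and outside because $|f|\le M$ while there $g^{+}=\varepsilon+2M$ and $g^{-}=-\varepsilon-2M$. Moreover $g^{\pm}$ are continuous on $\partial\Omega$ with $g^{\pm}(x_0)=\pm\varepsilon$.

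Next I would invoke monotonicity of the upper Perron solution, part 3) of Lemma 5.1: from $g^{-}\le f\le g^{+}$ one gets $\overline{H}_{g^{-}}\le\overline{H}_f\le\overline{H}_{g^{+}}$ in $\Omega$. Since $g^{+}$ and $g^{-}$ are continuous on $\partial\Omega$ and $x_0$ is $G(\cdot)$-regular, Definition 7.1 gives $\lim_{x\to x_0}\overline{H}_{g^{\pm}}(x)=g^{\pm}(x_0)=\pm\varepsilon$. Passing to the lower and upper limits in the sandwich yields
$$-\varepsilon\ \le\ \liminf_{x\to x_0}\overline{H}_f(x)\ \le\ \limsup_{x\to x_0}\overline{H}_f(x)\ \le\ \varepsilon.$$
As $\varepsilon>0$ was arbitrary, $\lim_{x\to x_0}\overline{H}_f(x)=0=f(x_0)$, which is the claim.

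The argument uses no machinery beyond the order properties of $\overline{H}$ recorded in Lemma 5.1 and the definition of regularity, so there is no serious analytic obstacle; this is why the result is a localization statement rather than a substantive theorem. The only point demanding care is the comparison step: the sandwiching functions $g^{\pm}$ must simultaneously be globally continuous on $\partial\Omega$, dominate (respectively be dominated by) $f$ everywhere, and take values as close to $f(x_0)$ as desired at $x_0$. The explicit tent $\psi$ achieves all three at once, converting the purely local continuity hypothesis at $x_0$ into a global comparison against genuinely continuous boundary data, to which the definition of $G(\cdot)$-regularity directly applies.
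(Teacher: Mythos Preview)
Your proof is correct and follows essentially the same approach as the paper: for the nontrivial ``only if'' direction, both arguments sandwich $f$ between continuous boundary functions that equal $f(x_0)\pm\varepsilon$ at $x_0$ and dominate/minorize $f$ globally, then apply monotonicity of $\overline{H}$ and the definition of regularity. The only difference is that you give an explicit tent construction for the comparison functions while the paper merely asserts their existence, and you also spell out the trivial ``if'' direction, which the paper omits.
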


\begin{proof}
Let $x_0 \in \partial \Omega$ be $G(\cdot)$-regular and fix $\epsilon > 0$. Let $U$ be an neighborhood of $x_0$ such that $|f-f(x_0)|  < \epsilon$ on $U \cap \partial \Omega$. Then, choose a continuous function $g: \partial \Omega \to [f(x_0)+\epsilon, \sup|f| + \epsilon]$ such that $g(x_0) = f(x_0) + \epsilon$ and $g = \sup|f|+\epsilon$ on $\partial \Omega \backslash U$. Now $g \geq f$ on $\partial \Omega$ and hence we have
$$\limsup_{x\to x_0}\overline{H}_f(x) \leq \lim_{x\to x_0}\overline{H}_g(x) = g(x_0) = f(x_0) + \epsilon.$$
Similarly, we have 
$$\liminf_{x\to x_0}\overline{H}_f(x) \geq f(x_0) - \epsilon.$$
Thus we conclude
$$\lim_{x \to x_0}\overline{H}_f(x) = f(x_0).$$
and the lemma is proved.
\end{proof}

\begin{lm}
Let $G(\cdot) \in \Phi(\Omega)$. Assume that $f: \partial \Omega \to \R$ is $G(\cdot)$-resolutive. Let $\Omega^\prime \subset \Omega$ be open and define $\tilde{f} : \partial \Omega^\prime \to \R$ by 
$$\tilde{f}(x) = \left \{
   \begin{array}{r c l}
    f(x) & \; \text{if} & x \in \partial \Omega \cap \partial \Omega^\prime\\
    H_f(x) & \; \text{if} & x \in \Omega \cap \partial \Omega^\prime.   
   \end{array}
   \right.$$
Then $\tilde{f}$ is $G(\cdot)$-resolutive with respect to $\Omega^\prime$ and the Perron $G(\cdot)$-solution for $\tilde{f}$ in $\Omega^\prime$ is $\restr{H_f}{\Omega^\prime}$
\end{lm}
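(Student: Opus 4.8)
The plan is to identify the upper and lower Perron classes of $\tilde f$ relative to $\Omega'$ with the restrictions to $\Omega'$ of the corresponding classes of $f$ relative to $\Omega$, and then to squeeze $\overline{H}^{\Omega'}_{\tilde f}$ and $\underline{H}^{\Omega'}_{\tilde f}$ between $H_f|_{\Omega'}$ and itself. First I would use the $G(\cdot)$-resolutivity of $f$ to record the basic facts: $U_f\neq\emptyset$, $L_f\neq\emptyset$, and $\overline{H}_f=\underline{H}_f=H_f$ is $G(\cdot)$-harmonic (hence continuous and finite) in $\Omega$. I would also observe that $\overline{\Omega'}\subset\overline{\Omega}$, so that $\partial\Omega'$ is the disjoint union of $\partial\Omega'\cap\Omega$ and $\partial\Omega'\cap\partial\Omega$ and $\tilde f$ is a well-defined real-valued function on $\partial\Omega'$; if $\Omega'$ is disconnected one argues componentwise, so there is no loss in treating it as a domain.

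The core step is that for every $v\in U_f$ the restriction $v|_{\Omega'}$ lies in $U_{\tilde f}$ with respect to $\Omega'$. Superharmonicity, boundedness below, and $v\not\equiv\infty$ are inherited on the open subset $\Omega'$: the comparison principle passes down because $D\subset\subset\Omega'$ forces $D\subset\subset\Omega$, and since a $G(\cdot)$-superharmonic function not identically $+\infty$ is finite on a dense set, $v\not\equiv\infty$ on any nonempty open subset. For the boundary inequality, take $\xi\in\partial\Omega'$. If $\xi\in\partial\Omega\cap\partial\Omega'$, then $\liminf_{x\to\xi}v(x)\geq f(\xi)=\tilde f(\xi)$ directly from $v\in U_f$. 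If $\xi\in\Omega\cap\partial\Omega'$, then lower semicontinuity of $v$ at the interior point $\xi$ together with $v\geq\overline{H}_f=H_f$ in $\Omega$ gives
$$\liminf_{\substack{x\to\xi\\ x\in\Omega'}}v(x)\geq v(\xi)\geq H_f(\xi)=\tilde f(\xi).$$
Hence $v|_{\Omega'}\in U_{\tilde f}$, so $\overline{H}^{\Omega'}_{\tilde f}\leq v|_{\Omega'}$; taking the infimum over $v\in U_f$ yields $\overline{H}^{\Omega'}_{\tilde f}\leq\overline{H}_f|_{\Omega'}=H_f|_{\Omega'}$.

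Running the mirror argument with the lower class — each $u\in L_f$ is $G(\cdot)$-subharmonic, bounded above, upper semicontinuous, and satisfies $u\leq\underline{H}_f=H_f$ in $\Omega$, so $u|_{\Omega'}\in L_{\tilde f}$ — gives $\underline{H}^{\Omega'}_{\tilde f}\geq\underline{H}_f|_{\Omega'}=H_f|_{\Omega'}$. Combining with the general inequality $\underline{H}^{\Omega'}_{\tilde f}\leq\overline{H}^{\Omega'}_{\tilde f}$ (Lemma 5.1 applied on $\Omega'$) forces
$$H_f|_{\Omega'}\leq\underline{H}^{\Omega'}_{\tilde f}\leq\overline{H}^{\Omega'}_{\tilde f}\leq H_f|_{\Omega'},$$
so all four coincide. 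Since $H_f$ is $G(\cdot)$-harmonic in $\Omega$, its restriction $H_f|_{\Omega'}$ is $G(\cdot)$-harmonic in $\Omega'$, and therefore $\tilde f$ is $G(\cdot)$-resolutive with respect to $\Omega'$ with Perron $G(\cdot)$-solution $H_f|_{\Omega'}$. I expect the only genuinely delicate point to be the boundary inequality at $\xi\in\Omega\cap\partial\Omega'$, where it is essential to combine lower semicontinuity of the superharmonic competitor with the pointwise bound $v\geq\overline{H}_f$ (and, dually, $u\leq\underline{H}_f$ for the lower class); the remaining checks that the defining properties of the Perron classes pass to restrictions are routine.
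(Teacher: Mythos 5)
Your proof is correct and follows essentially the same route as the paper: you show that each competitor in $U_f$ restricts to a member of $U_{\tilde f}$ on $\Omega'$ (via lower semicontinuity plus $v\geq\overline{H}_f=H_f$ at boundary points in $\Omega\cap\partial\Omega'$), take the infimum, and then handle the lower class symmetrically, whereas the paper gets the lower bound by applying the same argument to $-f$ and using $\underline{H}_f=-\overline{H}_{-f}$ --- an inessential difference. If anything, your write-up is more explicit than the paper's about the case split on $\partial\Omega'$ and the use of $v\geq\overline{H}_f$, which the paper leaves implicit.
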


\begin{proof}
Let $f: \partial \Omega \to \overline{R}$ be a $G(\cdot)$-resolutive,  $\Omega^\prime \subset \Omega$ and $ u \in U_f$. As $u$ is lower semicontinuous, then for each $y \in \Omega^\prime$
$$\lim_{y \to x} u(y) \geq \tilde{f}(x) \quad \text{for all} \, x \in \partial \Omega^\prime.$$
Hence $u \in U_{\tilde{f}}$ for $\tilde{f}$ in $\Omega^\prime$. So taking infimum over all $u$,we have
$$\overline{H}_{\tilde{f}} \leq \restr{H_f}{\Omega^\prime}.$$
Applying the same argument to $-f$, we obtain
$$ \underline{H}_{\tilde{f}} \leq \overline{H}_{\tilde{f}} \leq H_f = -H_{-f} \leq - \overline{H}_{\tilde{f}} \leq \underline{H}_{\tilde{f}} \quad \text{in} \; \Omega^\prime.$$
This concludes the proof.
\end{proof}

\begin{Th}
 Let $ G(\cdot) \in \Phi(\R^n) \cap C^1(\R^+)$ be strictly convex and satisfy $(SC)$, $(A_0)$, $(A_1)$, and $(A_{1,n})$. The point $x_0 \in \partial \Omega$ is ${G(\cdot)}$-regular if and only if for some $\rho > 0$, 
\begin{equation}
\int_0^{\rho} g^{-1}\left(x_0 , \frac{\textsl{cap}_{G(\cdot)}(B(x_0,t)\cap \Omega^\complement, B(x_0,2t)}{t^{n-1}}\right) \, \mathrm{d}t = \infty.
\end{equation}
\end{Th}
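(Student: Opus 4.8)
The plan is to prove the Wiener criterion in two directions, mirroring the classical Maz'ya and Kilpeläinen--Malý scheme but using the $G(\cdot)$-potential estimates established in Section 6. Throughout, write $W(\rho) := \int_0^\rho g^{-1}\bigl(x_0, \mathrm{cap}_{G(\cdot)}(\overline B(x_0,t)\cap\Omega^\complement, B(x_0,2t))/t^{n-1}\bigr)\,\mathrm{d}t$ for the Wiener integral, and recall that by Corollary 5.2 for boundary data $f \in W^{1,G(\cdot)}(\Omega)\cap C(\overline\Omega)$ the Perron solution $\overline H_f$ coincides with the $G(\cdot)$-harmonic function $h$ with $h-f \in W_0^{1,G(\cdot)}(\Omega)$; combined with Lemma 7.2 (locality), it suffices to test regularity against such nice data localized near $x_0$. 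Note the convergence/divergence of $W(\rho)$ is independent of $\rho$, so ``for some $\rho$'' is the same as ``for all $\rho$''.

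\textbf{Sufficiency.} Assume $W(\rho)=\infty$ for some (hence every) $\rho>0$. Fix continuous $f:\partial\Omega\to\R$; by Lemma 7.1 we may assume $f$ is bounded and continuous at $x_0$ with $f(x_0)=0$, and it is enough to show $\limsup_{x\to x_0}\overline H_f(x)\le 0$ (apply the same to $-f$). For small $r>0$ choose the $G(\cdot)$-potential $u=\mathcal R_{G(\cdot)}(\overline B(x_0,r)\setminus\Omega, B(x_0,4r))$. The key estimate is Theorem 6.3: for $x\in B(x_0,\rho)$,
$$1-u(x)\le \exp\Bigl(-C\int_\rho^r g^{-1}\bigl(x_0,\tfrac{\mathrm{cap}_{G(\cdot)}(\overline B(x_0,t)\cap\Omega^\complement, B(x_0,2t))}{t^{n-1}}\bigr)\,\mathrm{d}t + Cr\Bigr),$$
and since $W(r)\to\infty$ as $\rho\to 0$ (with $r$ fixed), the right-hand side tends to $0$, i.e. $u(x)\to 1$ as $x\to x_0$. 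Now $1-u$ is a nonnegative $G(\cdot)$-subsolution (indeed $1-u$ is bounded $G(\cdot)$-subharmonic, as $u$ is a supersolution by Lemma 6.1), vanishing on $\overline B(x_0,r)\setminus\Omega$ in the appropriate sense, and one builds a barrier: given $\varepsilon>0$ pick a neighborhood $U$ of $x_0$ with $|f|<\varepsilon$ on $U\cap\partial\Omega$, set $M=\sup|f|$, and check that $w := \varepsilon + M(1-u) + (\text{a small harmonic correction})$ lies in the upper class $U_f$ for $\Omega\cap B(x_0,r)$, using Lemma 7.3 to pass between $\Omega$ and the smaller domain. Then $\overline H_f \le w$ near $x_0$, and $\limsup_{x\to x_0}\overline H_f(x)\le \varepsilon + M\cdot 0 = \varepsilon$; let $\varepsilon\to 0$.

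\textbf{Necessity.} Assume $W(\rho)<\infty$ for some $\rho>0$; we construct continuous boundary data for which the Perron solution fails to attain its value at $x_0$. Take $r$ small and let $u=\mathcal R_{G(\cdot)}(\overline B(x_0,r)\setminus\Omega, B(x_0,4r))$ again. The decisive input is Theorem 6.4: $\liminf_{x\to x_0}u(x)\le C\bigl(W(4r)+r\bigr)$, and since $W$ is finite and absolutely continuous, $W(4r)+r\to 0$ as $r\to 0$; thus we can fix $r$ so small that $\liminf_{x\to x_0}u(x)\le \tfrac12$. Choose $f\in C(\partial\Omega)$ with $0\le f\le 1$, $f=1$ on $\partial\Omega\cap B(x_0,r/2)$, and $f$ supported in $B(x_0,r)$; comparing $\overline H_f$ with $1-u$ via the comparison principle (Lemma 5.2) on $\Omega\cap B(x_0,4r)$ — on the inner sphere $1-u$ dominates using that $u\le 1$, and on $B(x_0,r)\cap\partial\Omega$ one has $f=1 = $ value of $1-(\text{extended }u)$ appropriately — gives $\overline H_f \le 1-u$ in a neighborhood of $x_0$. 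Hence $\liminf_{x\to x_0}\overline H_f(x)\le 1-\limsup(\cdots)$... more precisely $\limsup_{x\to x_0}(1-u(x)) = 1-\liminf u \ge 1/2$ is the wrong direction, so instead one uses $\overline H_f \le 1-u$ to get $\liminf_{x\to x_0}\overline H_f(x)\le \liminf_{x\to x_0}(1-u(x)) = 1-\limsup_{x\to x_0}u(x)$; since $u\to$ need not be $1$, one argues along the sequence realizing $\liminf u\le 1/2$ to conclude $\liminf_{x\to x_0}\overline H_f(x) \le 1-\tfrac12<1=f(x_0)$, so $x_0$ is not $G(\cdot)$-regular.

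\textbf{Main obstacle.} The delicate point is the barrier/comparison bookkeeping in both directions: verifying that the modified potential genuinely belongs to the Perron upper (resp. lower) class for the \emph{localized} domain $\Omega\cap B(x_0,r)$ — which requires the resolutivity and pasting lemmas (Theorem 5.4, Lemma 7.3) together with the fact that $1-u$ extends across $\overline B(x_0,r)\setminus\Omega$ as a genuine $G(\cdot)$-subharmonic function with the right boundary behaviour — and then correctly chaining the one-sided limits so the $\liminf$/$\limsup$ comparisons point the right way. The estimates of Theorems 6.3 and 6.4 do all the quantitative work; the remaining effort is making the potential-theoretic gluing rigorous, which is routine given Sections 5 and 6 but must be written carefully.
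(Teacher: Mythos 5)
Your sufficiency argument is essentially the paper's: localize via the pasting lemma (the paper's Lemma 7.2), dominate $H_f-f(x_0)$ on $\Omega\cap B(x_0,4r)$ by $m+M(1-u)$ with $u$ the $G(\cdot)$-potential, and let the exponential estimate (the paper's Theorem 6.2; your ``6.3'') force $1-u\to 0$ when the Wiener integral diverges. Up to renumbering and some unchecked barrier bookkeeping, that half is fine.

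The necessity half, however, has a genuine gap. The comparison $\overline H_f\le 1-u$ is false and points the wrong way: on $\partial\Omega\cap\overline B(x_0,r)$, which is contained in the compact set $K=\overline B(x_0,r)\cap\Omega^{\complement}$ where $u=1$, you have $1-u=0$ while your $f=1$, so the inequality cannot hold up to that boundary portion (your parenthetical ``$f=1=$ value of $1-u$'' is exactly where it breaks). Moreover, even if one granted $\overline H_f\le 1-u$, the chain you end with does not deliver the conclusion: along a sequence $x_j\to x_0$ with $u(x_j)\to\liminf u\le\frac12$ you would get $\overline H_f(x_j)\le 1-u(x_j)\to 1-\liminf u\ge\frac12$, an upper bound that is at least $\frac12$ and, since the estimate of the paper's Theorem 6.3 actually makes $\liminf u$ small as $r\to0$, is in fact close to $1$ — it never contradicts $\overline H_f(x)\to 1=f(x_0)$. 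The paper's necessity argument avoids any such comparison: $u$ is itself the solution of the Sobolev--Dirichlet problem in $B(x_0,4r)\setminus(\overline B(x_0,r)\cap\Omega^{\complement})$ with continuous boundary data ($1$ on $K$, $0$ on $\partial B(x_0,4r)$); this domain coincides with $\Omega$ near $x_0$ and the data equals $1$ at $x_0$, so $\liminf_{x\to x_0}u(x)<1$ (achievable for small $r$ when the Wiener integral converges) directly shows $x_0$ is not $G(\cdot)$-regular. If you prefer to test with data on $\partial\Omega$ as you set out to do, the correct comparison is from above by $u$ itself (plus a small correction), i.e.\ $\overline H_f\le u+\varepsilon$ near $x_0$, giving $\liminf_{x\to x_0}\overline H_f\le\liminf_{x\to x_0}u<1=f(x_0)$; as written, your construction does not close.
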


\begin{proof}
Let $f \in C(\partial \Omega)$ and $\epsilon > 0$ be arbitrary. There exists $r > 0$ such that 
$$\sup_{\partial \Omega \cap B(x_0, 2r)}|f-f(x)| \leq \epsilon.$$ 
Let $u$ be the $G(\cdot)$-potential for $\overline{B(x_0,r}) \backslash \Omega$ with respect to $B(x_0,4r)$ and $\tilde{f}$ be as in Lemme $7.2$ with $\Omega^\prime := \Omega \cap 4B$. So, we put $B=B(x_0,r)$ , $m=\sup_{\partial \Omega \cap 2B}(f-f(x_0))$ and $M = \sup_{\partial \Omega}(f-f(x_0))$. Then, we have 
$$h - f(x_0) \leq m + M(1-u) \;\text{on} \; \partial \Omega^\prime.$$
Using Lemma $5.1$ and Lemma $7.2$, we get 
$$H_f-f(x_0) = \restr{H_h}{\Omega^\prime}-f(x_0) \leq  \restr{H_{h - f(x_0)}}{\Omega^\prime} \leq H_{m+M(1-u)} = m+M(1-u) \; \text{on} \; \Omega^\prime. $$
Hence, from Theorem $6.2$, we have
$$\begin{array}{ll}
& \sup_{\Omega \cap B(x_0,\rho)}\left(H_f-f(x_0)\right) \leq  \sup_{\partial \Omega \cap 2B}\left(f-f(x_0)\right)\\
& + \sup_{\partial \Omega}\left(f-f(x_0)\right)\exp \left(-C \displaystyle \int_{\rho}^{r} g^{-1}\left( x_0,\displaystyle\frac{\text{cap}_{G(\cdot)}(\overline{B}(x_0,t) \cap \Omega^\complement; B(x_0,2t))}{t^{n-1}}\right) \, \mathrm{d}t + Cr\right).
\end{array}$$
So, by the condition $(7.1)$ for all sufficiently small $0 < \rho \leq r$, we get
$$\sup_{\Omega \cap B(x_0,\rho)}\left(H_f-f(x_0)\right) \leq 2\epsilon.$$
Then $H_f$ is continuous at $x_0$ and as $f \in C(\partial \Omega)$ was arbitrary, which implies that $x_0$ is ${G(\cdot)}$-regular.\\
For the converse, by Theorem $6.3$, we have 
$$\liminf_{x \to x_0} u(x) \leq  C \left(\displaystyle \int_{0}^{4r} g^{-1}\left( x_0,\displaystyle\frac{\text{cap}_{G(\cdot)}(\overline{B}(x_0,t) \cap \Omega^\complement; B(x_0,2t))}{t^{n-1}} \right) \, \mathrm{d}t + r\right).$$ 
By the condition $(7.1)$, we can find $r>0$ sufficiently small so that 
$$\liminf_{x \to x_0} u(x) < 1.$$
As $u$ is solution of the Sobolev-Dirichlet problem in $4B\backslash (\overline{B}\cap \Omega^\complement)$ with the continuous boundary data $1$ on $K$ and $0$ on $\partial (4B)$, then $x_0$ is not ${G(\cdot)}$-regular.
\end{proof}

\end{document}